\newcommand\bg{\boldsymbol{g}}
\newcommand\bh{\boldsymbol{h}}
\newcommand\bvphi{\boldsymbol{\varphi}}
\newcommand\bx{\boldsymbol{x}}
\newcommand\bv{\boldsymbol{v}}
\newcommand\bu{\boldsymbol{u}}
\newcommand\bn{\boldsymbol{n}}
\newcommand\Kn{\mathit{Kn}}
\newcommand{\mM}{\mathcal{M}}
\newcommand{\Qabs}{Q^{\mathrm{abs}}}
\newtheorem{theorem}{Theorem}
\newtheorem{lemma}[theorem]{Lemma}
\newtheorem{corollary}[theorem]{Corollary}
\theoremstyle{remark}
\title{Numerical solver for the Boltzmann equation with self-adaptive collision operators}
\author{Zhenning Cai}
\address[Zhenning Cai]{Department of Mathematics, National University of Singapore,
  Level 4, Block S17, 10 Lower Kent Ridge Road, Singapore 119076}
\email{matcz@nus.edu.sg}
\author{Yanli Wang}
\address[Yanli Wang]{Beijing
    Computational Science Research Center, Beijing, China 100193}
\email{ylwang@csrc.ac.cn}
\thanks{Zhenning Cai's work was supported by the Academic Research Fund of the Ministry of Education of Singapore under Grant No. R-146-000-305-114. The work of Yanli Wang is partially supported by Science Challenge Project (No. TZ2016002) and the National Natural Science Foundation of China (Grant No. U1930402 and 12031013).}
\keywords{Boltzmann equation, Burnett spectral method, self-adaptation}
\begin{document}
\maketitle

\begin{abstract}
We use the Burnett spectral method to solve the Boltzmann equation whose collision term is modeled by separate treatments for the low-frequency part and high-frequency part of the solution. For the low-frequency part representing the sketch of the distribution function, the binary collision is applied, while for the high-frequency part representing the finer details, the BGK approximation is applied.
The parameter controlling the ratio of the high-frequency part and the low-frequency part is selected adaptively on every grid cell at every time step. This self-adaptation is based on an error indicator describing the difference between the model collision term and the original binary collision term. The indicator is derived by controlling the quadratic terms in the modeling error with linear operators. Our numerical experiments show that such an error indicator is effective and computationally affordable.

\vspace{3pt}

\noindent {\bf Keywords:} Boltzmann equation, Burnett spectral method, self-adaptation
\end{abstract}

\section{Introduction} \label{sec:intro}
Due to the extensive applications of rarefied gas dynamics in a number of engineering fields, including the manufacturing of spacecrafts and micro-electro-mechanical systems, the numerical simulation of gas kinetic theory is under active research in recent years. In the kinetic theory, the fluid state is described using the distribution function $f(\bx,\bv,t)$, where $t$ is the time, and $\bx$ and $\bv$ represent the spatial coordinates and the velocity of gas molecules, respectively. The distribution function represents the number density of gas molecules in the joint position-velocity space. In this paper, we consider the Boltzmann equation:
\begin{displaymath}
\frac{\partial f}{\partial t} +
  \bv \cdot \nabla_{\bx} f = Q[f,f],
\end{displaymath}
where $Q[f,f]$ is the binary collision term defined by:
\begin{displaymath}
Q[f,g](\bv) = \frac{1}{2} \int_{\mathbb{R}^3} \int_{\bn \perp \bg} \int_0^{\pi} B(|\bg|, \chi)
  [f(\bv') g(\bv_*') + f(\bv_*') g(\bv') - f(\bv) g(\bv_*) - f(\bv_*) g(\bv)]
  \,\mathrm{d}\chi \,\mathrm{d}\bn \,\mathrm{d}\bv_*.
\end{displaymath}
In the equation above, the relative velocity $\bg$ is defined by $\bg = \bv - \bv_*$, and the post-collisional velocities $\bv'$ and $\bv_*'$ are given by
\begin{align*}
\bv' &= \cos^2(\chi/2) \bv + \sin^2(\chi/2) \bv_* - |\bg| \sin(\chi/2) \cos(\chi/2) \bn, \\
\bv_*' &= \cos^2(\chi/2) \bv_* + \sin^2(\chi/2) \bv + |\bg| \sin(\chi/2) \cos(\chi/2) \bn.
\end{align*}
Note that here $\bn$ is a unit vector in $\mathbb{S}^2$, which implies that the integral with respect to $\bn$ is a one-dimensional integral over a circle perpendicular to $\bg$. The non-negative function $B(\cdot,\cdot)$ is the collision kernel determined by the mutual force between gas molecules.

One of the numerical difficulties in the discretization of the Boltzmann equation lies in the high-dimensional integral form of $Q[f,f]$. To compute the collision term efficiently, the velocity variable in the distribution function is usually discretized by high-order schemes such as the spectral methods \cite{Pareschi1996,Bobylev1997} and discontinuous Galerkin methods \cite{Alekseenko2014}, so that the number of degrees of freedom can be reduced. In the literature, the spectral methods mainly include the Fourier spectral method \cite{Bobylev1997,Filbet2006,Gamba2017} based on the periodization of the velocity variable and the Hermite/Burnett spectral method based on the unbounded velocity domain \cite{Gamba2018, Kitzler2019}. The Fourier spectral method provides a significant improvement  in computational efficiency \cite{Dimarco2018,Jaiswal2019}, and the recent development of the Hermite/Burnett spectral method shows its advantage due to its connection with modelling in the gas kinetic theory. Specifically, the Hermite/Burnett spectral method can be linked to the moment method since the coefficients in the spectral expansion are actually the moments of the distribution functions. Such a property has been applied to derive the regularized 13-moment equations in \cite{Cai2020r}, and inversely, some modelling techniques can therefore also be applied to the spectral methods. In \cite{Cai2015}, by taking the idea of the Shakhov operator \cite{Shakhov1968}, the authors divided all moments of the distribution function into two sets, with one set including low-order moments describing the sketch of the distribution function, and the other including high-order moments providing the details. For the set with low-order moments, the linearized collision operator is applied, while for the set with high-order moments, a simple decay towards the equilibrium is used as an approximation. This hybrid approach is later extended to quadratic collision operators in \cite{Wang2019, Cai2020}. One parameter in this hybrid approach is the critical order $M_0$ that defines the ``low-order'' and ``high-order'' moments. In this paper, we will focus on the selection of this parameter in the spatially inhomogeneous Boltzmann equation.

Since the parameter $M_0$ defines the modeling accuracy, it is expected that the choice of $M_0$ should depend on the ``modeling error'' given by some differences between the current collision model and the exact binary collision model when applied to the current distribution function. Once such an error indicator is obtained, we can change the value of $M_0$ dynamically during our simulation. However, the construction of such an error indicator is far from trivial due to the following reasons:
\begin{enumerate}
\item Unlike the {\it a posteriori} error estimation in the finite element methods, we do not have an equation to define the ``residual'' as an error indicator. 
\item The collision operator is generally unbounded, so that even an {\it a priori} error estimation is non-trivial.
\item Another common technique by comparing the current model and a more accurate model with larger $M_0$ is not applicable here due to the rapid growth of the computational cost with respect to $M_0$ (usually $M_0^8$).
\end{enumerate}
Because of these difficulties, we have to look for non-standard techniques to quantify the error. Since a rigorous and numerically affordable error bound is difficult to find, as an initial study, the goal of this paper is to establish an error indicator with low computational cost compared to the collision term. With this error indicator, we are able to choose this modeling parameter $M_0$ adaptively on each spatial grid cell at each time step with the purpose to reduce the computational time on the collision terms. Due to the high computational complexity with respect to $M_0$, reducing $M_0$ can effectively save the computational cost.

This work contributes to the adaptive methods for the Boltzmann equation. In the literature, the self-adaptive methods have been applied to both spatial discretization and velocity discretization \cite{Kolobov2007,Chen2012,Baranger2014,Abdelmalik2017}, which can effectively reduce the degrees of freedom in the simulation. There have been also many works coupling the kinetic equations and fluid equations, so that the cheaper Navier-Stokes equations or Euler equations can be solved where the fluid is close to its local equilibrium \cite{Degond2005, Degond2012, Rey2015}, and many criteria have been proposed to predict the breakdown of fluid equations \cite{Levermore1998,Wang2003,Filbet2018}. While the method in this work does not change the number of variables, we consider the modeling adaptivity, which changes the complexity of the collision model. We hope that this work can provide a new perspective for the simulation of Boltzmann equations, which might also be applicable in other related areas.

In the rest of this paper, we will first review the Burnett spectral method introduced in \cite{Hu2020b} (Section \ref{sec:Burnett}), and then in Section \ref{sec:indicator}, we will detail the derivation of the error indicator, and the general structure of our numerical algorithm will also be presented. One- and two-dimensional numerical experiments showing the efficiency of the self-adaptive method will be given in Section \ref{sec:num}, and the paper is concluded by a brief summary in Section \ref{sec:conclusion}.


\section{Burnett spectral method for the Boltzmann equation}
\label{sec:Burnett}
The Burnett spectral method is based on the following expansion of the distribution function:
\begin{equation} \label{eq:inf_series}
f(\bx,\bv,t) = \sum_{l=0}^{+\infty} \sum_{m=-l}^l \sum_{n=0}^{+\infty} f_{lmn}(\bx,t) \varphi_{lmn}(\bv),
\end{equation}
where $\varphi_{lmn}(\bv) = \varphi_{lmn}^0(\bv - \overline{\bu})$, and
\begin{multline*}
\varphi_{lmn}^0(\bv) = \sqrt{\frac{2^{1-l}\pi^{3/2}n!}{\Gamma(n+l+3/2)}} L^{(l+1/2)}_n \left( \frac{|\bv|^2}{2\bar{\theta}} \right) \left( \frac{|\bv|}{\sqrt{\bar{\theta}}} \right)^l Y_l^m \left( \frac{\bv}{|\bv|} \right) \cdot \frac{1}{(2\pi \overline{\theta})^{3/2}} \exp \left(-\frac{|\bv|^2}{2\overline{\theta}} \right), \\
l,n = 0,1,\cdots, \qquad m = -l,\cdots,l,
\end{multline*}
with $L_n^{(\alpha)}(\cdot)$ being the Laguerre polynomials and $Y_l^m(\cdot)$ being the spherical harmonics. The parameters $\overline{\bu}$ and $\overline{\theta}$ are chosen to specify the center and the scaling of the basis functions. Note that $\varphi_{lmn}$ is the product of a Gaussian and a polynomial of degree $l+2n$. When we truncate the series \eqref{eq:inf_series} in our numerical method, we select a positive integer $M$ as the upper bound of the polynomial, and preserve only the terms with $l+2n \leqslant M$. Thus, the truncated series reads
\begin{equation} \label{eq:truncated_series}
f_M(\bx,\bv,t) = \sum_{l=0}^M \sum_{m=-l}^l \sum_{n=0}^{\lfloor (M-l)/2 \rfloor} f_{lmn}(\bx,t) \varphi_{lmn}(\bv).
\end{equation}
In this expansion, the basis functions $\varphi_{lmn}$ satisfy the orthogonality
\begin{equation} \label{eq:orth}
\langle \varphi_{lmn}(\bv), \varphi_{l'm'n'}(\bv) \rangle_{\omega} := \int_{\mathbb{R}^3} \varphi_{lmn}^{\dagger}(\bv) \varphi_{l'm'n'}(\bv) \omega(\bv) \,\mathrm{d}\bv = \delta_{ll'} \delta_{mm'} \delta_{nn'},
\end{equation}
where $\dagger$ refers to the complex conjugate, and the weight function is
\begin{displaymath}
\omega(\bv) = \left[ \frac{1}{(2\pi \overline{\theta})^{3/2}} \exp \left(-\frac{|\bv - \overline{\bu}|^2}{2\overline{\theta}} \right) \right]^{-1}.
\end{displaymath}
Note that here $\omega(\bv)$ is the reciprocal of a global Maxwellian, while in what follows, we use the term ``local Maxwellian'' to refer to the Maxwellian $\mathcal{M}(\bv) = \exp(\alpha + \boldsymbol{\beta} \cdot \bv + \gamma |\bv|^2)$ associated with a distribution function $f(\bv)$ by
\begin{equation} \label{eq:local_Maxwellian}
\int_{\mathbb{R}^3} \begin{pmatrix} 1 \\ \bv \\ |\bv|^2 \end{pmatrix} \mathcal{M}(\bv) \,\mathrm{d}\bv =
\int_{\mathbb{R}^3} \begin{pmatrix} 1 \\ \bv \\ |\bv|^2 \end{pmatrix} f(\bv) \,\mathrm{d}\bv.
\end{equation}

Let $\mathbf{f}(\bx,t)$ be the vector including all the coefficients $f_{lmn}(\bx,t)$ with $l + 2n \leqslant M$, and define $\bvphi$ as the vector including all the basis function $\varphi_{lmn}(\bv)$ also with $l + 2n \leqslant M$ arranged in the same order as $\mathbf{f}$. Then
\begin{displaymath}
f_M(\bx,\bv,t) = [\mathbf{f}(\bx,t)]^T \bvphi(\bv).
\end{displaymath}
With the Petrov-Galerkin method \cite{Gamba2018, Hu2020b} based on the orthogonality \eqref{eq:orth}, the semi-discrete Boltzmann equation has the form
\begin{equation} \label{eq:PG}
\frac{\partial \mathbf{f}}{\partial t} +
  \sum_{k=1}^3 \mathbf{A}_k \frac{\partial \mathbf{f}}{\partial x_k} = \mathbf{Q} : (\mathbf{f} \otimes \mathbf{f}).
\end{equation}
Here $\mathbf{A}_k$, $k=1,2,3$ are sparse matrices coming from the discretization of the advection term, and $\mathbf{Q}$ is a 3-tensor representing the discrete collision kernel. Since $\mathbf{f}$ has $O(M^3)$ components, the tensor $\mathbf{Q}$ has $O(M^9)$ elements, where only $O(M^8)$ elements are nonzero due to the rotational invariance of the collision operator \cite{Cai2020}. Despite this sparsity of $\mathbf{Q}$, the computational cost grows quickly as $M$ increases. To reduce the computational cost, in \cite{Cai2020, Hu2020b}, the authors chose $M_0 < M$ and split the discrete distribution function into two parts $f_M = f^{(1)} + f^{(2)}$, where
\begin{equation} \label{eq:f1_f2}
\begin{aligned}
f^{(1)}(\bx,\bv,t) &= \sum_{l=0}^{M_0} \sum_{m=-l}^l \sum_{n=0}^{\lfloor (M_0 - l) / 2 \rfloor} f_{lmn}(\bx,t) \varphi_{lmn}(\bv), \\
f^{(2)}(\bx,\bv,t) &= \sum_{l=0}^M \sum_{m=-l}^l \sum_{n=\max(0, \lfloor (M_0 - l) / 2 \rfloor + 1)}^{\lfloor (M - l) / 2 \rfloor} f_{lmn}(\bx,t) \varphi_{lmn}(\bv).
\end{aligned}
\end{equation}
Due to the high efficiency of the spectral approximation, we expect that by choosing $M_0 < M$, the first part $f^{(1)}$ can capture the sketch of distribution function $f$, while $f^{(2)}$ provides more details of its profile. For simplicity, we write $\mathbf{f}$ as
\begin{displaymath}
\mathbf{f} = \begin{pmatrix} \mathbf{f}^{(1)} \\ \mathbf{f}^{(2)} \end{pmatrix},
\end{displaymath}
where $\mathbf{f}^{(1)}$ includes all the coefficients in the expansion of $f^{(1)}$ and $\mathbf{f}^{(2)}$ includes all the coefficients in the expansion of $f^{(2)}$. Similarly, we let $\mathbf{M}$ denote the coefficients in the truncated expansion of the local Maxwellian defined by \eqref{eq:local_Maxwellian},and use $\mathbf{M}^{(1)}$ and $\mathbf{M}^{(2)}$ to denote its subvectors. Then using the idea of the BGK and Shakhov collision operators, the original collision term $\mathbf{Q} : (\mathbf{f} \otimes \mathbf{f})$ can be approximated by
\begin{equation} \label{eq:approx_coll}
\begin{pmatrix}
\mathbf{Q}_{M_0} : (\mathbf{f}^{(1)} \otimes \mathbf{f}^{(1)}) \\
\nu_{M_0} (\mathbf{M}^{(2)} - \mathbf{f}^{(2)})
\end{pmatrix}.
\end{equation}
Here $\mathbf{Q}_{M_0}$ is the discrete collision kernel for $M = M_0$. The approximation \eqref{eq:approx_coll} applies the accurate binary collision operator to the sketch of the distribution function, while for the part representing the finer details, a simpler BGK-like expression is used instead. Such an idea can be found in \cite{Cercignani2006} as a generalization of the classical BGK model. It was later realized for the linearized Boltzmann collision operator in \cite{Cai2015}, where the authors proved that for the linearized collision term, our BGK-like operator converges to the original operator in the resolvent sense as $M_0 \rightarrow +\infty$. The generalization to quadratic collision operators was first introduced in \cite{Wang2019}, where the choice of the parameter $\nu_{M_0}$ was chosen to be the spectral radius of the truncated linearized collision operator following the approach in \cite{Cai2015}. Here we adopt the same choice of $\nu_{M_0}$, and the details are given in the Appendix \ref{sec:nu}.

To preserve Maxwellian in the homogeneous Boltzmann equation, in \cite{Hu2020b}, the approximate collision term \eqref{eq:approx_coll} is supplemented by adding a close-to-zero term so that the semi-discrete equation reads
\begin{equation} \label{eq:semi_discrete_SSP}
\frac{\partial \mathbf{f}}{\partial t} +
  \sum_{k=1}^3 \mathbf{A}_k \frac{\partial \mathbf{f}}{\partial x_k} = \widetilde{\mathbf{Q}}(M_0; \mathbf{f}) :=
  \begin{pmatrix}
  \mathbf{Q}_{M_0} : (\mathbf{f}^{(1)} \otimes \mathbf{f}^{(1)} - \mathbf{M}^{(1)} \otimes \mathbf{M}^{(1)}) \\
  \nu_{M_0} (\mathbf{M}^{(2)} - \mathbf{f}^{(2)})
  \end{pmatrix}.
\end{equation}
Here $\mathbf{M}^{(1)}$ plays the same role as $\mathbf{f}^{(1)}$ and provides the sketch of the local Maxwellian. Since any Maxwellian is a smooth function, we again expect that $\mathbf{M}^{(1)}$ can well capture the general structure of the Maxwellian with a moderate value of $M_0$. Thus the corresponding collision term $\mathbf{Q}_{M_0} : (\mathbf{M}^{(1)} \otimes \mathbf{M}^{(1)})$ is likely to be close to zero. Such a discrete collision term ensures that it vanishes if $\mathbf{f}^{(1)} = \mathbf{M}^{(1)}$ and $\mathbf{f}^{(2)} = \mathbf{M}^{(2)}$. The idea of this approach comes from the steady-state preserving method introduced in \cite{Filbet2015}, which uses an equivalent form
\begin{displaymath}
\mathbf{Q}_{M_0} : (\mathbf{f}^{(1)} \otimes \mathbf{f}^{(1)} - \mathbf{M}^{(1)} \otimes \mathbf{M}^{(1)}) = \mathbf{Q}_{M_0} : (\mathbf{g}^{(1)} \otimes \mathbf{g}^{(1)} + \mathbf{g}^{(1)} \otimes \mathbf{M}^{(1)} + \mathbf{M}^{(1)} \otimes \mathbf{g}^{(1)}),
\end{displaymath}
where $\mathbf{g}^{(1)} = \mathbf{f}^{(1)} - \mathbf{M}^{(1)}$ denotes the non-equilibrium part of the distribution function. Such splitting also borrows ideas from the method of micro-macro decomposition to develop asymptotic preserving schemes \cite{Bennoune2008}.

The idea of hybridizing expensive and cheap models has been tested in a number of previous works \cite{Cai2015, Cai2018, Wang2019, Hu2020, Cai2020, Hu2020b}. However, the choice of $M_0$ remains to be problem-dependent, and currently its determination can only be based on trial-and-error approaches. In this work, we would like to determine $M_0$ based on an error estimate, and different $M_0$ will be used on different spatial grids.


\section{Error indicator for the adaptive collision operator} \label{sec:indicator}
According to the discussion in the previous section, given $\mathbf{f}$ defined on any spatial grid cell, our purpose is to choose appropriate $M_0$ such that the right-hand side of \eqref{eq:semi_discrete_SSP} is a good approximation of the collision term. Since the collision operator is defined locally in both time and space, we expect that the choice $M_0$ on any cell depends only on the distribution function defined thereon. Hence, we will omit the arguments $\bx$ and $t$ in the following discussion. Also, we assume that the distribution function $f$ has been normalized such that its integral equals $1$. The purpose of this normalization is to provide the bounds for the relative error. To begin with, we will introduce some notations and assumptions for the sake of convenience.

\subsection{Notations and hypotheses}
Let $\mathcal{T}_M$ be truncation operator that cut off the series defined in \eqref{eq:inf_series} by discarding all the terms with polynomials of degree greater than $M$. Therefore
\begin{displaymath}
\mathcal{T}_M f = f_M
\end{displaymath}
with $f_M$ given in \eqref{eq:truncated_series}. Then in the original spectral method \eqref{eq:PG}, the right-hand side represents the coefficients in the expansion of $\mathcal{T}_M Q[f_M, f_M]$. In other words, we have
\begin{equation} \label{eq:original_rhs}
 [\mathbf{Q} : (\mathbf{f} \otimes \mathbf{f})]^T \bvphi = \mathcal{T}_M Q[f_M, f_M].
\end{equation}
Similarly, for the right-hand side of \eqref{eq:semi_discrete_SSP}, we have
\begin{equation} \label{eq:new_rhs}
  \begin{pmatrix}
  \mathbf{Q}_{M_0} : (\mathbf{f}^{(1)} \otimes \mathbf{f}^{(1)} - \mathbf{M}^{(1)} \otimes \mathbf{M}^{(1)}) \\
  \nu_{M_0} (\mathbf{M}^{(2)} - \mathbf{f}^{(2)})
  \end{pmatrix}^T \bvphi = \mathcal{T}_{M_0} \left( Q[f^{(1)}, f^{(1)}] - Q[\mathcal{M}^{(1)}, \mathcal{M}^{(1)}] \right) + \nu_{M_0}(\mathcal{M}^{(2)} - f^{(2)}),
\end{equation}
where we have used $\mathcal{M}$ to denote the local Maxwellian associated with the distribution function $f$, and we will use the notations $\mathcal{M}_M$, $\mathcal{M}^{(1)}$ and $\mathcal{M}^{(2)}$ defined similarly to \eqref{eq:truncated_series} and \eqref{eq:f1_f2}. Likewise, we define the operator
\begin{displaymath}
Q_M = \mathcal{T}_M Q,
\end{displaymath}
so that the right-hand side of \eqref{eq:original_rhs} can be written as $Q_M[f_M, f_M]$.

To choose $M_0$, we are interested in the estimation of the difference between the two right-hand sides in \eqref{eq:original_rhs} and \eqref{eq:new_rhs}. For this aim, we make the following assumptions:
\begin{itemize}
\item The truncated series $f_M$ provides a sufficiently good approximation of the distribution function $f$, so that we can assume
  \begin{displaymath}
  f_M(\bv) \gtrapprox 0, \qquad \forall \bv \in \mathbb{R}^3,
  \end{displaymath}
  where ``$\gtrapprox$'' means the inequality holds approximately.
\item The truncated series $\mathcal{M}_M$ provides a sufficiently good approximation of the local Maxwellian $\mathcal{M}$, so that we can assume $Q[\mathcal{M}_M, \mathcal{M}_M] \approx 0$.
\item The operator $Q_M$ provides a sufficiently good approximation of the collision operator $Q$, so that $Q_M$ and $Q$ are interchangeable in the derivation below.
\end{itemize}
In general, these assumptions mean that $M$ is sufficiently large so that truncated functions and operators can almost preserve the properties of the original functions and operators. The purpose of these conditions is to focus mainly on the modeling error to be described below, and temporarily ignore the error introduced by the spectral method itself. Alternatively, one can regard $M$ as infinity in our following derivations so that the conditions above hold naturally. After the error indicator is derived, to make the computation feasible, the infinities that appear in its expression are replaced by a finite $M$ to approximate our error indicator.

Now we use $\Delta Q$ to denote the modeling error, i.e. the difference between the right-hand sides of \eqref{eq:original_rhs} and \eqref{eq:new_rhs}:
\begin{displaymath}
\Delta Q := Q_M[f_M, f_M] - \Big( Q_{M_0}[f^{(1)}, f^{(1)}] - Q_{M_0}[\mathcal{M}^{(1)}, \mathcal{M}^{(1)}] + \nu_{M_0}(\mathcal{M}^{(2)} - f^{(2)}) \Big).
\end{displaymath}
Our aim is to find a heuristic error indicator characterizing the size of the quantity above. This error indicator must be relatively cheap to compute given the expansion of $f_M$. To this end, we split $\Delta Q$ into three terms, written in the three lines below:
\begin{equation} \label{eq:Delta_Q}
\begin{split}
\Delta Q & = \big( Q_M[f^{(1)}, f^{(1)}] - Q_M[\mathcal{M}^{(1)}, \mathcal{M}^{(1)}] \big) - \big( Q_{M_0}[f^{(1)}, f^{(1)}] - Q_{M_0}[\mathcal{M}^{(1)}, \mathcal{M}^{(1)}] \big) \\
& + Q_M[f_M, f_M] - \big( Q_M[f^{(1)}, f^{(1)}] - Q_M[\mathcal{M}^{(1)}, \mathcal{M}^{(1)}] \big) \\
& - \nu_{M_0} (\mathcal{M}^{(2)} - f^{(2)}).
\end{split}
\end{equation}
The first line in this equation is the truncation error of the function $Q_M[f^{(1)}, f^{(1)}] - Q_M[\mathcal{M}^{(1)}, \mathcal{M}^{(1)}]$. The estimation of the truncation error usually requires the information of the function, which is expensive to retrieve. As a workaround, we choose to ignore this term in our error indicator. In fact, this term may be relatively small due to the following two reasons:
\begin{enumerate}
\item Both $f^{(1)}$ and $\mathcal{M}^{(1)}$ are early truncations of the series (only include polynomials of degree  up to $M_0$), which usually appear to be very smooth. Therefore both $Q_M[f^{(1)}, f^{(1)}]$ and $Q_M[\mathcal{M}^{(1)}, \mathcal{M}^{(1)}]$ are sufficiently smooth functions which can be well approximated by an early truncation of their expansions.
\item According to the observations in \cite{Cai2015}, the dependence of higher moments on the lower moments is relatively weak, meaning that $f^{(1)}$ does not produce large numbers in the higher coefficients in the expansion of $Q_M[f^{(1)}, f^{(1)}]$, which is similar for $\mathcal{M}^{(1)}$.
\end{enumerate}
These statements are yet to be verified rigorously. We would like to leave it to  future work. Below we will mainly focus on the quantification of the second line in \eqref{eq:Delta_Q}.

\subsection{Building indicator}

According to our working hypotheses, we rewrite the second line of \eqref{eq:Delta_Q} by replacing $Q_M$ with $Q$ and subtracting an approximately zero term $Q[\mathcal{M}_M, \mathcal{M}_M]$. Thereby we can rewrite this term as
\begin{equation}
    \label{eq:DQ}
    \delta Q := \big(Q[f_M, f_M] - Q[\mM_M, \mM_M]\big) - \big(Q [f^{(1)}, f^{(1)}] - Q[\mM^{(1)}, \mM^{(1)}]\big).
\end{equation}
Using $f_M = f^{(1)} + f^{(2)}$ and the fact that $Q[\cdot,\cdot]$ is quadratic and symmetric, we have 
\begin{equation}
    \label{eq:sim_DQ1}
    \begin{aligned}
    \delta Q & =\big(Q[\mM_M +( f_M - \mM_M),\mM_M +( f_M - \mM_M) ]  - Q[\mM_M, \mM_M]\big) \\
    & \qquad  - \big(Q[\mM^{(1)} +(f^{(1)} - \mM^{(1)}),\mM^{(1)} +(f^{(1)} - \mM^{(1)}) ]  - Q[\mM^{(1)}, \mM^{(1)}] \big)\\
    & =  \underbrace{2\big(Q[f_M - \mM_M, \mM_M] - Q[f^{(1)} - \mM^{(1)}, \mM^{(1)}] \big)}_{\delta Q_a} + \underbrace{\big( Q[f_M - \mM_M, f_M - \mM_M] - Q[f^{(1)} - \mM^{(1)}, f^{(1)} - \mM^{(1)}]\big)}_{\delta Q_b},
    \end{aligned}
\end{equation}
where $\delta Q_a$ and $\delta Q_b$ can be further simplified as 
\begin{equation}
    \label{eq:Q_ab} 
    \begin{aligned}
    \delta Q_a & =2 Q[(f^{(1)} - \mM^{(1)} )+ (f^{(2)} - \mM^{(2)}), \mM^{(1)} + \mM^{(2)}]  - 2Q[f^{(1)} - \mM^{(1)}, \mM^{(1)}] \\
    & = 2 Q[f^{(2)} - \mM^{(2)},  \mM_M]  + 2 Q[f^{(1)} - \mM^{(1)},  \mM^{(2)}], \\
    \delta Q_b & =Q[f_M - \mM_M, f_M - \mM_M] - Q[(f_M - \mM_M) - (f^{(2)} -  \mM^{(2)}), (f_M - \mM_M) - ( f^{(2)} - \mM^{(2)})] \\
     & =  Q[f^{(2)} - \mM^{(2)}, f^{(2)}- \mM^{(2)}]  + 2 Q[f_M - \mM_M , f^{(2)}- \mM^{(2)}].  \end{aligned}
\end{equation}
Inserting \eqref{eq:Q_ab} into \eqref{eq:sim_DQ1} yields
\begin{equation}
     \label{eq:sim_DQ}
    \begin{aligned}   
    \delta Q  & = 2\underbrace{Q[ f_M, f^{(2)} - \mM^{(2)}]}_{\delta Q_1} + 2\underbrace{Q[f^{(1)} - \mM^{(1)}, \mM^{(2)} ]}_{\delta Q_2} 
     +\underbrace{Q[f^{(2)} - \mM^{(2)}, f^{(2)} - \mM^{(2)} ]}_{\delta Q_3}.
    \end{aligned}
\end{equation}
This expression implies that $\delta Q$ is small in either of the following two scenarios:
\begin{enumerate}
\item The functions $f^{(1)}$ and $\mM^{(1)}$ can accurately describe $f_M$ and $\mM_M$, respectively, which means $f^{(2)}$ and $\mM^{(2)}$ are small. 
\item The function $f_M$ is close to the equilibrium $\mM_M$, which means $f^{(1)} - \mathcal{M}^{(1)}$ and $f^{(2)} - \mathcal{M}^{(2)}$ are both small.
\end{enumerate}
In either case, the term $\delta Q_3$ appears to be a quadratic term, and is expected to be smaller than the previous two terms. Hence, we ignore this term in our error indicator and mainly discuss the estimation of $\delta Q_1$ and $\delta Q_2$. The analysis above indicates that $\delta Q_1$ and $\delta Q_2$ present two different sources of the error: $\delta Q_1$ mainly captures the error due to the BGK approximation of the high-frequency part, and $\delta Q_2$ mainly captures the error due to the missing interaction between the low-frequency part and the high-frequency part.

Before proceeding, we would like to first discuss how much computational cost we can afford to estimate  \eqref{eq:sim_DQ}. Recall that the time complexity for evaluating all the coefficients in the expansion of $Q_M[f_M, f_M]$ is $O(M^8)$. Therefore, the computational cost of the indicator should be essentially smaller than $O(M^8)$ to achieve savings. Besides, the computational cost for the right-hand side of \eqref{eq:semi_discrete_SSP} is $O(M_0^8 + M^3)$ according to \cite{Hu2020b}, and our computational cost for the indicators should not be significantly larger than this. Thus, it is unrealistic to compute $\delta Q_1$ directly since its computation requires already $O(M^8)$ operations. Meanwhile, we would also like to avoid direct computation of $\delta Q_2$ since it requires $O(M^6 M_0^2)$ operations, which would take the most time of the simulation if computed. As a result, we need to estimate these terms using a computationally cheaper expression. The most naive approach is to consider the following type of estimation:
\begin{displaymath}
\|Q[f,g]\| \leqslant C \|f\| \cdot \|g\|.
\end{displaymath}
Unfortunately, the collision operator $Q$ is unbounded for most collision kernels. Note that the gain term may be bounded when the assumption of Grad's angular cut-off holds (see e.g. \cite{Mouhot2004}), while our approach to be proposed in the rest part of this section does not rely on this assumption.

The basic idea is to bound $Q[f,g]$ by splitting it into the gain and loss operators:
\begin{align*}
Q^+[f,g] &= \frac{1}{2} \int_{\mathbb{R}^3} \int_{\bn \perp \bg} \int_0^{\pi} B(|\bg|, \chi) [f(\bv') g(\bv_*') + f(\bv_*') g(\bv')] \,\mathrm{d}\chi \,\mathrm{d}\bn \,\mathrm{d}\bv_*, \\
Q^-[f,g] &= \frac{1}{2} \int_{\mathbb{R}^3} \int_{\bn \perp \bg} \int_0^{\pi} B(|\bg|, \chi) [f(\bv) g(\bv_*) + f(\bv_*) g(\bv)] \,\mathrm{d}\chi \,\mathrm{d}\bn \,\mathrm{d}\bv_*.
\end{align*}
Then for any functions $f$ and $g$ satisfying $f \geqslant 0$, if we can find another distribution function $h$ satisfying $|g(\bv)| \leqslant h(\bv)$ for every $\bv \in \mathbb{R}^3$, then it holds that
\begin{equation} \label{eq:est}
|Q[f,g]| = |Q^+[f,g] - Q^-[f,g]| \leqslant Q^+[f,|g|] + Q^-[f,|g|] \leqslant Q^+[f,h] + Q^-[f,h],
\end{equation}
where we have used the positivity of the collision kernel $B(|\bg|,\chi)$ to get the last inequality.
Thus, the right-hand side of \eqref{eq:est} can be used as an upper bound of $|Q[f,g]|$. In general, the computational cost of this upper bound is as high as a full collision operator. Therefore, the function $h$ must be chosen carefully so that $Q^+[f,h] + Q^-[f,h]$ can be efficiently computed. For simplicity, we define
\begin{displaymath}
\Qabs[f,h] = Q^+[f,h] + Q^-[f,h],
\end{displaymath}
which is different from $Q[f,h]$ since $Q[f,h]$ is the difference of these two terms. Based on this idea, we need to answer the following two questions:
\begin{enumerate}
\item What should the general form of $h$ be such that the computation of $\Qabs[f,h]$ is efficient?
\item Given $f_M$, how to find the function $h$ as the bound of $f_M$?
\end{enumerate}
These two questions will be addressed in the following two subsections.

\subsubsection{Space of the bounding function}
Our choice of $h$ is established on the rotational invariance of the collision operator:
\begin{lemma}
For any orthogonal matrix $\boldsymbol{R}$, let
\begin{displaymath}
f^R(\bv) = f(\boldsymbol{R} \bv), \quad g^R(\bv) = g(\boldsymbol{R} \bv).
\end{displaymath}
Then
\begin{displaymath}
Q[f,g](\boldsymbol{R}\bv) = Q[f^R, g^R](\bv), \qquad
\Qabs[f,g](\boldsymbol{R}\bv) = \Qabs[f^R, g^R](\bv).
\end{displaymath}
\end{lemma}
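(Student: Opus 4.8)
The plan is to prove the rotational invariance of $Q$ first, since the analogous statement for $\Qabs$ will follow by exactly the same manipulation applied separately to $Q^+$ and $Q^-$ (indeed, the identity will hold term by term once we track how the change of variables acts on the integrand). So first I would write out $Q[f^R, g^R](\bv)$ using the definition, with the integration variables $\bv_*$, $\bn$, $\chi$, and the post-collisional velocities $\bv'$, $\bv_*'$ built from $\bv$, $\bv_*$, $\chi$, $\bn$ via the given formulas. In this expression the arguments of $f^R$ and $g^R$ are $\boldsymbol{R}\bv'$, $\boldsymbol{R}\bv_*'$, $\boldsymbol{R}\bv$, $\boldsymbol{R}\bv_*$. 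The goal is to substitute $\bv_* \mapsto \boldsymbol{R}^{-1}\bw_*$ (equivalently $\bw_* = \boldsymbol{R}\bv_*$) and $\bn \mapsto \boldsymbol{R}^{-1}\bm$ and recognize the result as $Q[f,g](\boldsymbol{R}\bv)$.

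The key steps, in order: (1) Note that the relative velocity transforms covariantly: with $\bw = \boldsymbol{R}\bv$ and $\bw_* = \boldsymbol{R}\bv_*$ we have $\bw - \bw_* = \boldsymbol{R}\bg$, so $|\bw - \bw_*| = |\bg|$ since $\boldsymbol{R}$ is orthogonal; hence the kernel $B(|\bg|,\chi)$ is unchanged. (2) Check that the post-collisional map is equivariant: plugging $\boldsymbol{R}\bv$, $\boldsymbol{R}\bv_*$, and $\boldsymbol{R}\bn$ into the formulas for $\bv'$ and $\bv_*'$ yields exactly $\boldsymbol{R}\bv'$ and $\boldsymbol{R}\bv_*'$ — this uses linearity of the formulas in $(\bv,\bv_*,\bn)$ together with $|\boldsymbol{R}\bg| = |\bg|$. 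So $f^R(\bv') = f(\boldsymbol{R}\bv') = f(\bw')$ where $\bw'$ is the post-collisional velocity associated with $(\bw,\bw_*,\chi,\bm)$ and $\bm = \boldsymbol{R}\bn$, and likewise for the other three terms. (3) Handle the measures: the substitution $\bv_* = \boldsymbol{R}^{-1}\bw_*$ has Jacobian $|\det \boldsymbol{R}^{-1}| = 1$, so $\mathrm{d}\bv_* = \mathrm{d}\bw_*$; the constraint $\bn \perp \bg$ becomes $\bm \perp \bw$ (since $\bm \cdot \bw = \boldsymbol{R}\bn \cdot \boldsymbol{R}\bg = \bn\cdot\bg = 0$), and $\boldsymbol{R}$ maps the unit circle in $\bg^\perp$ isometrically onto the unit circle in $\bw^\perp$, so the one-dimensional arclength measure $\mathrm{d}\bn$ is preserved; the $\chi$-integral is untouched. (4) Collect: after these substitutions the integral is literally the definition of $Q[f,g](\bw) = Q[f,g](\boldsymbol{R}\bv)$. (5) Repeat verbatim for $Q^+$ and $Q^-$ — the only difference is which of the four products $f(\cdot)g(\cdot)$ appear, and each transforms the same way — giving $\Qabs[f^R,g^R](\bv) = \Qabs[f,g](\boldsymbol{R}\bv)$.

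I do not expect a genuine obstacle here; this is a standard symmetry computation. The one point deserving a little care is step (2)–(3): one must confirm that the decomposition of a collision in terms of the circular parameter $\bn \in \mathbb{S}^2 \cap \bg^\perp$ is itself rotation-equivariant, i.e. that rotating the whole configuration by $\boldsymbol{R}$ sends the parametrization of outgoing velocities for $(\bv,\bv_*)$ to the parametrization for $(\boldsymbol{R}\bv,\boldsymbol{R}\bv_*)$ with parameter $\boldsymbol{R}\bn$, and that the arclength element on the circle is preserved. Both are immediate from orthogonality of $\boldsymbol{R}$, but they are the substantive content of the lemma; everything else is bookkeeping. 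I would present the argument for $Q$ in full and then simply remark that $Q^+$ and $Q^-$ are handled identically, which immediately yields the claim for $\Qabs = Q^+ + Q^-$.
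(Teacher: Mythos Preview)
Your argument is correct and matches the paper's treatment: the paper does not actually prove the lemma but cites it as a classical result and simply observes that the statement for $\Qabs$ follows by applying the same reasoning separately to $Q^+$ and $Q^-$, which is exactly your step~(5). One small notational slip: in step~(3) you write ``$\bm \perp \bw$'' where you mean $\bm \perp (\bw - \bw_*) = \boldsymbol{R}\bg$, as your own parenthetical computation $\boldsymbol{R}\bn \cdot \boldsymbol{R}\bg = \bn\cdot\bg = 0$ makes clear.
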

Here the rotational invariance of the collision operator is a classical result and can be found in \cite[p. 45]{Degond2004}. The rotational invariance of $\Qabs$ can be derived from the rotational invariance of both $Q^+$ and $Q^-$. A natural consequence of this result is
\begin{corollary}
Let $h(\bv)$ be a function depending only on $|\bv|$. Then the linear operator $\mathcal{L}_h[\cdot] := \Qabs[\cdot,h]$ is a rotational invariant operator, \textit{i.e.}, for any orthogonal matrix $\boldsymbol{R}$, we have
\begin{displaymath}
\mathcal{L}_h[f](\boldsymbol{R}\bv) = \mathcal{L}_h[f^R](\bv),
\end{displaymath}
for $f^R(\bv) = f(\boldsymbol{R}\bv)$.
\end{corollary}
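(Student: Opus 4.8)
The plan is to derive the corollary directly from the rotational invariance of $\Qabs$ given in the Lemma, by specializing the second argument to a radial function. First I would observe that if $h$ depends only on $|\bv|$, then for any orthogonal matrix $\boldsymbol{R}$ we have $h^R(\bv) = h(\boldsymbol{R}\bv) = h(|\boldsymbol{R}\bv|) = h(|\bv|) = h(\bv)$, since orthogonal matrices preserve the Euclidean norm. In other words, a radial $h$ is invariant under the substitution $\bv \mapsto \boldsymbol{R}\bv$, so $h^R = h$.

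Next I would simply invoke the Lemma with $g = h$: it gives $\Qabs[f,h](\boldsymbol{R}\bv) = \Qabs[f^R, h^R](\bv)$ for every $f$. Substituting $h^R = h$ from the previous step, this becomes $\Qabs[f,h](\boldsymbol{R}\bv) = \Qabs[f^R, h](\bv)$. Rewriting in terms of the notation $\mathcal{L}_h[\cdot] = \Qabs[\cdot, h]$, the left side is $\mathcal{L}_h[f](\boldsymbol{R}\bv)$ and the right side is $\mathcal{L}_h[f^R](\bv)$, which is exactly the claimed identity $\mathcal{L}_h[f](\boldsymbol{R}\bv) = \mathcal{L}_h[f^R](\bv)$. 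The linearity of $\mathcal{L}_h$ is immediate from the bilinearity of $\Qabs$ in its two arguments (each of $Q^+$ and $Q^-$ is linear in each slot), so $\mathcal{L}_h$ is indeed a linear operator, and the computation above shows it commutes with rotations in the stated sense.

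I do not anticipate a genuine obstacle here; the corollary is essentially a one-line specialization. The only point requiring a small amount of care is making explicit that radial symmetry of $h$ is precisely what forces $h^R = h$, so that the second argument of $\Qabs$ is unchanged under the rotation and only the first argument picks up the $R$-twist — this is the step that converts the symmetric invariance statement of the Lemma into the one-sided (operator) invariance statement of the corollary. If one wanted to be fully rigorous about the Lemma itself, the substantive work would be a change of variables $\bv_* \mapsto \boldsymbol{R}\bv_*$, $\bn \mapsto \boldsymbol{R}\bn$ in the collision integral together with the observation that the map $(\bv,\bv_*,\bn,\chi) \mapsto (\bv',\bv_*')$ is equivariant under $\boldsymbol{R}$ and that $|\bg|$ and the measure $\mathrm{d}\bn$ on the circle $\bn \perp \bg$ are rotation-invariant; but since the Lemma is assumed, none of that enters the proof of the corollary.
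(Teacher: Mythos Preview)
Your proposal is correct and matches the paper's approach: the paper simply states the corollary as ``a natural consequence'' of the Lemma without giving further details, and your argument---using $h^R = h$ for radial $h$ and then invoking the rotational invariance of $\Qabs$---is precisely the intended one-line specialization.
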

For linear rotationally invariant operators, we have the following result:
\begin{theorem}
Suppose $\mathcal{L}[\cdot]$ is a linear rationally invariant operator. For any non-negative integers $l$ and $n$, there exists an isotropic function $c_{ln}(|\bv|)$ such that
\begin{displaymath}
\mathcal{L}[\varphi_{lmn}](\bv) = c_{ln}(|\bv|) \varphi_{lmn}(\bv), \qquad \forall m = -l, \cdots, l.
\end{displaymath}
\end{theorem}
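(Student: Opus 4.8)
The plan is to exploit the fact that, reading off the definition, each basis function factors as $\varphi_{lmn}(\bv) = R_{ln}(|\bv|)\,Y_l^m(\bv/|\bv|)$, where $R_{ln}$ packages the Laguerre-polynomial and Gaussian factors and depends on $|\bv|$ alone (here the center $\bar{\bu}$ of the basis is taken at the origin, which is what makes this factorization compatible with $\mathcal L$ being rotationally invariant about the origin). Hence, for each fixed $(l,n)$, rotating the argument of $\varphi_{lmn}$ replaces it by a fixed linear combination of the $\varphi_{lm'n}$, $m'=-l,\dots,l$, given by the degree-$l$ Wigner matrices — only the $Y_l^m$ respond, the radial factor being inert — so the vector $\boldsymbol\Psi_{ln}(\bv) := (\mathcal L[\varphi_{l,-l,n}](\bv),\dots,\mathcal L[\varphi_{l,l,n}](\bv))^{T}$ will be seen to transform under the degree-$l$ irreducible representation of $SO(3)$, exactly as the pure spherical-harmonic vector $\boldsymbol Y_l(\hat\bv) := (Y_l^{-l}(\hat\bv),\dots,Y_l^{l}(\hat\bv))^{T}$ does. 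The theorem then reduces to a Schur-type rigidity statement: any $(2l+1)$-vector-valued function of $\bv$ transforming under this representation must be a radial scalar function times $\boldsymbol Y_l(\hat\bv)$.

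Concretely, I would carry this out in four steps. (i) From the rotation law of the spherical harmonics together with the invariance of $R_{ln}(|\bv|)$, record that $(\varphi_{lmn})^{\boldsymbol R}$ is a fixed linear combination of the $\varphi_{lm'n}$ governed by a degree-$l$ Wigner-type matrix $\boldsymbol M^{l}(\boldsymbol R)$; inserting this into the hypothesis $\mathcal L[f](\boldsymbol R\bv) = \mathcal L[f^{\boldsymbol R}](\bv)$ and using linearity of $\mathcal L$ yields the equivariance $\boldsymbol\Psi_{ln}(\boldsymbol R\bv) = \boldsymbol M^{l}(\boldsymbol R)\,\boldsymbol\Psi_{ln}(\bv)$, i.e. the same transformation law that $\boldsymbol Y_l$ obeys. (ii) Evaluate at a base point $\bv_0 = r\hat{\boldsymbol z}$ on the positive $z$-axis; its stabilizer in $SO(3)$ is the circle of rotations about $\hat{\boldsymbol z}$, on which $\boldsymbol M^{l}$ is the diagonal matrix with $(m,m)$-entry $e^{\mathrm{i}m\theta}$ in the standard basis, so the fixed-vector condition forces all components of $\boldsymbol\Psi_{ln}(\bv_0)$ with $m\neq 0$ to vanish: $\boldsymbol\Psi_{ln}(r\hat{\boldsymbol z}) = \gamma_{ln}(r)\,\boldsymbol e_0$ for a scalar $\gamma_{ln}(r)$, with $\boldsymbol e_0$ the standard basis vector in the $m=0$ slot. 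Since $\boldsymbol Y_l(\hat{\boldsymbol z}) = Y_l^{0}(\hat{\boldsymbol z})\,\boldsymbol e_0$ with $Y_l^{0}(\hat{\boldsymbol z})\neq 0$ (the $Y_l^{m}$ with $m\neq 0$ vanish at the poles), this is the pointwise proportionality $\boldsymbol\Psi_{ln}(r\hat{\boldsymbol z}) = \big(\gamma_{ln}(r)/Y_l^{0}(\hat{\boldsymbol z})\big)\boldsymbol Y_l(\hat{\boldsymbol z})$. (iii) Transport to an arbitrary $\bv\neq 0$: picking any $\boldsymbol R$ with $\boldsymbol R\hat{\boldsymbol z} = \bv/|\bv|$ and applying the common transformation law of step (i) to both sides of the proportionality at the base point $|\bv|\hat{\boldsymbol z}$ carries it to $\boldsymbol\Psi_{ln}(\bv) = \big(\gamma_{ln}(|\bv|)/Y_l^{0}(\hat{\boldsymbol z})\big)\boldsymbol Y_l(\bv/|\bv|)$, whose right side depends only on $\bv$ and so is independent of the chosen $\boldsymbol R$. (iv) Reading off the $m$-th component, $\mathcal L[\varphi_{lmn}](\bv) = \tilde c_{ln}(|\bv|)\,Y_l^{m}(\bv/|\bv|)$ with $\tilde c_{ln}(|\bv|) = \gamma_{ln}(|\bv|)/Y_l^{0}(\hat{\boldsymbol z})$ isotropic and independent of $m$; substituting $Y_l^{m}(\bv/|\bv|) = \varphi_{lmn}(\bv)/R_{ln}(|\bv|)$ and setting $c_{ln}(|\bv|) := \tilde c_{ln}(|\bv|)/R_{ln}(|\bv|)$ yields the stated form.

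The one step that is more than bookkeeping, and hence the main obstacle, is the equivariant-rigidity argument of (ii)--(iii): one must track the representation-theoretic conventions precisely (which representation governs which object, and where transposes and complex conjugates sit) so that the stabilizer computation really closes and $c_{ln}$ is well defined on each sphere $|\bv| = r$. Its essence is the multiplicity-one appearance of the trivial representation of the axial stabilizer $SO(2)$ inside the degree-$l$ representation of $SO(3)$, which singles out the $m=0$ direction; equivalently, (ii)--(iii) can be recast as Schur's lemma applied to the isotypic decomposition of the ambient function space, at the expense of heavier machinery. Two minor caveats deserve a remark in the write-up: the radial factor $R_{ln}(|\bv|)$ vanishes at the (isolated) zeros of the Laguerre polynomial, where the clean statement is the one for $Y_l^{m}$ rather than literally a scalar multiple of $\varphi_{lmn}$; and the origin $\bv = 0$, where $\bv/|\bv|$ is undefined, is either absorbed by the vanishing of $\varphi_{lmn}$ for $l\geqslant 1$ or treated as one isolated point for $l = 0$.
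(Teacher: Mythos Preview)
Your proposal is correct. The paper does not actually prove this theorem in-text: it simply cites \cite[Theorem~1]{Cai2015} and remarks that the argument there, though stated for the linearized collision operator, uses only rotational invariance. Your equivariance argument via the Wigner representation of $SO(3)$ and the stabilizer computation at the pole is precisely the kind of proof that underlies such a citation, so there is no meaningful difference of approach to report. Your caveat about the zeros of $R_{ln}$ is well taken and in fact sharpens the paper's statement: what the argument literally delivers is $\mathcal L[\varphi_{lmn}](\bv)=\tilde c_{ln}(|\bv|)\,Y_l^m(\bv/|\bv|)$, and this is exactly the form the paper uses downstream in deriving the expansion $\mathcal L_h[\varphi_{lmn}]=\sum_{n_1}a_{lnn_1}^{(h)}\varphi_{lmn_1}$.
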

This result is already given in \cite[Theorem 1]{Cai2015}, where the statement is written for the linearized collision operator but the proof only requires the rotational invariance of $\mathcal{L}$. This theorem indicates that the series form of $\mathcal{L}_h[\varphi_{lmn}]$ should be
\begin{equation} \label{eq:L_h}
\mathcal{L}_h[\varphi_{lmn}] = \sum_{n_1=0}^{+\infty} a_{lnn_1}^{(h)} \varphi_{lmn_1}.
\end{equation}
Consequently, if we choose $h(\bv)$ to be an isotropic function that depends only on $|\bv|$, we have
\begin{displaymath}
\Qabs[f_M,h] = \mathcal{L}_h[f_M] = \sum_{l=0}^M \sum_{m=-l}^l \sum_{n=0}^{\lfloor (M-l)/2 \rfloor} f_{lmn} \mathcal{L}_h[\varphi_{lmn}] = \sum_{l=0}^M \sum_{m=-l}^l \sum_{n=0}^{\lfloor (M-l)/2 \rfloor} \sum_{n_1=0}^{+\infty} f_{lmn} a_{lnn_1}^{(h)} \varphi_{lmn_1}.
\end{displaymath}
Numerically, we truncate the series above by replacing $+\infty$ with $\lfloor (M-l)/2 \rfloor$. Thus, when all the coefficients $a_{lnn'}^{(h)}$ are given, the computational cost for the expansion of $\Qabs[f_M,h]$ is $O(M^4)$. Technically, this can be done by carrying out the matrix-vector multiplication
\begin{equation} \label{eq:Qabs_coef}
\Qabs_{lmn_1} = \sum_{n=0}^{\lfloor (M-l)/2 \rfloor} a_{lnn_1}^{(h)} f_{lmn}, \qquad l = 0,1\cdots,M, \quad m = -l, \cdots, l, \quad n_1 = 0,1,\cdots,\lfloor (M-l)/2 \rfloor,
\end{equation}
as allows efficient libraries of linear algebra to be used.

To find the coefficients $a_{lnn_1}^{(h)}$, we need the expansion of $h$ in terms of the basis functions $\varphi_{lmn}$. Since we choose $h$ to be isotropic, the expansion holds the form
\begin{displaymath}
h(\bv) = \sum_{n'=0}^{+\infty} h_{n'} \varphi_{00n'}(\bv).
\end{displaymath}
For each $n'$, the function $\varphi_{00n'}$ is also isotropic, implying that $\mathcal{L}_{\varphi_{00n'}}[\cdot] = \Qabs[\varphi_{00n'}, \cdot]$ is rotationally invariant. Thus we can assume
\begin{displaymath}
\mathcal{L}_{\varphi_{00n'}}[\varphi_{lmn}] = \sum_{n_1 = 0}^{+\infty} a_{lnn_1}^{n'} \varphi_{lmn_1},
\end{displaymath}
so that
\begin{displaymath}
\mathcal{L}_{h}[\varphi_{lmn}] = \sum_{n'=0}^{+\infty} h_{n'} \mathcal{L}_{\varphi_{00n'}}[\varphi_{lmn}] = \sum_{n_1=0}^{+\infty} \sum_{n'=0}^{+\infty} a_{lnn_1}^{n'} h_{n'} \varphi_{lmn_1}.
\end{displaymath}
By comparing this equation with \eqref{eq:L_h}, one can find that
\begin{equation} \label{eq:a_lnn1}
a_{lnn_1}^{(h)} = \sum_{n' = 0}^{+\infty} a_{lnn_1}^{n'} h_{n'}, \qquad l = 0,1,\cdots,M, \quad n, n_1 = 0,1,\cdots, \lfloor (M-l)/2 \rfloor.
\end{equation}
In practice, we again truncate the above series by replacing $+\infty$ with some $N_0 = O(M)$. Then the equation \eqref{eq:a_lnn1} shows that the computation of all required coefficients again needs $O(M^4)$ operations.

By now, we have concluded that choosing $h(\bv)$ to be an isotropic function can reduce the computational cost of $\Qabs[f,h]$ to $O(M^4)$ (including \eqref{eq:a_lnn1} and \eqref{eq:Qabs_coef}). To complete the computation, we still need to obtain $a_{lnn_1}^{n'}$ for a given collision operator. These coefficients can be precomputed before the simulation, which will be discussed in detail in Section \ref{sec:a_lnn1}. Now we will first discuss the construction of $h$ such that $|g| \leqslant h$ for some given function $g$ in order that the estimation \eqref{eq:est} holds.

\subsubsection{The approximation of the bounding function and the error indicator}
In our implementation, instead of looking for $h$ that bounds $|g|$ pointwisely, we choose to find an approximate upper bound with the form
\begin{equation} \label{eq:hv}
h(\bv) = \sum_{n'=0}^{N_0} h_{n'} \varphi_{00n'}(\bv), \qquad N_0 = \left\lceil \frac{M}{2} \right\rceil.
\end{equation}
The general idea to find $h$ is to bound the radial part and the angular part separately. To this end, we write the basis functions as
\begin{equation}
  \varphi_{lmn}(\bv) = \varphi^{1}_{ln}(\bv-\overline{\bu}) Y_l^m \left( \frac{\bv-\overline{\bu}}{|\bv-\overline{\bu}|} \right),
\end{equation}
where 
\begin{equation}
\label{eq:split_bas_psh}
\varphi^{1}_{ln}(\bv) = 
\sqrt{\frac{2^{1-l}\pi^{3/2}n!}{\Gamma(n+l+3/2)}} L^{(l+1/2)}_n \left( \frac{|\bv|^2}{2\overline{\theta}} \right) \left( \frac{|\bv|}{\sqrt{\overline{\theta}}}\right)^l\cdot \frac{1}{(2\pi \overline{\theta})^{3/2}} \exp \left(-\frac{|\bv|^2}{2\overline{\theta}} \right)
\end{equation}
represents the radial part of the basis function. Without loss of generality, we set $\bar{\bu} = 0$ and $\bar{\theta} = 1$ in the analysis below. Since $\varphi_{ln}^1(\bv)$ depends only on $|\bv|$, here we approximate its absolute value by a linear combination of $\varphi_{00n}$:
\begin{equation}
\label{eq:appro_phi1}
|\varphi^{1}_{ln}(\bv)| \approx \sum_{n' = 0}^{N_0 } s_{ln}^{n'} \varphi_{00n'}(\bv),
\end{equation}
and we choose to find the approximation by orthogonal projection:
 \begin{equation}
     \label{eq:sn}
     s_{ln}^{n'} = \int_{\mathbb{R}^3}|\varphi^1_{ln}(\bv)| \varphi_{00n'}(\bv) \omega(\bv) \,\mathrm{d}\bv.
 \end{equation}
 Since $\varphi_{00n}$ contains a polynomial with even order $2n$, by choosing $N_0 = \lceil{\frac{M}{2}}\rceil$, we can guarantee that the degree of the polynomial in the right-hand side of \eqref{eq:appro_phi1} is no less than that in the radial function $\varphi^1_{ln}$. Thus, it holds for any distribution function $g$ that 
\begin{equation}
    \label{eq:bound_f}
    \begin{aligned}
    |g(\bv)| & 
    = \left| \sum_{l=0}^M \sum_{n=0}^{\lfloor (M-l)/2 \rfloor} \sum_{m=-l}^l  g_{lmn} Y_l^m\left(\frac{\bv}{|\bv|}\right) \varphi^1_{ln}(\bv)\right|\leqslant \sum_{l=0}^M \sum_{n=0}^{\lfloor (M-l)/2 \rfloor} \left|\sum_{m=-l}^l g_{lmn}Y_{l}^m\left(\frac{\bv}{|\bv|}\right)\right| |\varphi^1_{ln}(\bv)| \\
    & \leqslant \sum_{l=0}^M \sum_{n=0}^{\lfloor (M-l)/2 \rfloor} g_{ln} |\varphi^1_{ln}(\bv)|
    \approx \sum_{l=0}^M \sum_{n=0}^{\lfloor (M-l)/2 \rfloor} g_{ln} \sum_{n' = 0}^{N_0} s_{ln}^{n'}\varphi_{00n'}(\bv),
    \end{aligned}
\end{equation}
where 
\begin{equation}
    \label{eq:coeG}
    g_{ln} = \max_{|\bn| = 1}\left| \sum_{m=-l}^l g_{lmn}Y^m_{l}(\bn)\right|.
\end{equation}
Equation \eqref{eq:bound_f} shows that we can choose
\begin{equation} \label{eq:h}
h_{n'} = \sum_{l=0}^M \sum_{n=0}^{\lfloor (M-l)/2 \rfloor} g_{ln} s_{ln}^{n'}, \qquad n' = 0,1,\cdots,N_0
\end{equation}
such that $h(\bv)$ is an approximate upper bound of $g(\bv)$.

In the calculation of $h_{n'}$, the coefficients $s_{ln}^{n'}$ can be precomputed by numerical integration before the simulation. Thus once $g_{ln}$ are obtained, the computational cost is $O(M^3)$. As for $g_{ln}$, we again choose to approximate them instead of computing them exactly. We pick a finite set of points $\Omega \in \mathbb{S}^2$ and approximate $g_{ln}$ by
\begin{equation}
    \label{eq:coeG1}
    g_{ln} \approx \max_{\bn \in \Omega}\left| \sum_{m=-l}^l g_{lmn}Y^m_{l}(\bn)\right|.
\end{equation}
In our implementation, the fifty-point Lebedev-Gauss integral points \cite{Lebedev1975} are chosen to form the set $\Omega$. Thus the computational cost to find all $g_{ln}$ is also $O(M^3)$.

By such means, we can find $h(\bv)$ with the form \eqref{eq:hv} such that $|f^{(2)}(\bv) - \mM^{(2)}(\bv)| \lessapprox h(\bv)$, meaning that $h(\bv)$ is an approximate bound of $f^{(2)} - \mM^{(2)}$. Thus $\delta Q_1$ defined in \eqref{eq:sim_DQ} can be bounded by
  \begin{equation}
     \label{eq:est_Q1}
     \begin{aligned}
     |\delta Q_1| &\leqslant Q^{\rm abs}[f_M, |f^{(2)} - \mM^{(2)}|] \lessapprox \Qabs[f_M, h],
     \end{aligned}
 \end{equation}
which can be computed via \eqref{eq:Qabs_coef} and \eqref{eq:a_lnn1} with computational cost $O(M^4)$. To bound $\delta Q_2$, we adopt the similar approach: by constructing $h^{(1)}(\bv)$ and $h^{(2)}(\bv)$ such that $|f^{(1)} - \mM^{(1)}| \lessapprox h^{(1)}(\bv)$, $|\mM^{(2)}| \lessapprox h^{(2)}(\bv)$ and
\begin{equation}
  h^{(1)}(\bv) = \sum_{n' = 0}^{N_0} h^{(1)}_{n'} \varphi_{00n'}(\bv),
  \qquad h^{(2)}(\bv) = \sum_{n' = 0}^{N_0} h^{(2)}_{n'} \varphi_{00n'}(\bv),
\end{equation}
we have
 \begin{equation}
     \label{eq:eta_Q2}
     \begin{aligned}
      |\delta Q_2| &\leqslant \Qabs[|f^{(1)} - \mM^{(1)}|, |\mM^{(2)}|] \lessapprox \Qabs[h^{(1)}, h^{(2)}] \\
      & =  \sum_{n' = 0}^{N_0} \sum_{n=0}^{N_0} h^{(1)}_{n} h^{(2)}_{n'} \mathcal{L}_{\varphi_{00n'}} [\varphi_{00n}] = \sum_{n' = 0}^{N_0} \sum_{n=0}^{N_0} h_n^{(1)}h^{(2)}_{n'} \sum_{n_1 = 0}^{N_0} a_{0n n_1}^{n'}\varphi_{00n_1} \\
      & = \sum_{n_1 = 0}^{N_0} \left( \sum_{n'=0}^{N_0} \sum_{n=0}^{N_0} h_n^{(1)} h_{n'}^{(2)} a_{0nn_1}^{n'} \right) \varphi_{00n_1},
     \end{aligned}
 \end{equation}
 where the coefficients
\begin{equation} \label{eq:tQ}
\widetilde{Q}^{\mathrm{abs}}_{n_1} = \sum_{n'=0}^{N_0} \sum_{n=0}^{N_0} h_n^{(1)} h_{n'}^{(2)} a_{0nn_1}^{n'}, \qquad n = 0,1,\cdots,N_0
\end{equation}
can be computed with time complexity $O(M^3)$. Finally, we choose our error indicator to be the sum of the bounds of both $\delta Q_1$ and $\delta Q_2$:
\begin{equation} \label{eq:indicator}
\mathrm{Indicator} = \sqrt{\sum_{l=0}^M \sum_{m=-l}^l \sum_{n_1=0}^{N_0} \left|\Qabs_{lmn_1}\right|^2} +\sqrt{ \sum_{n_1=0}^{N_0} \left|\widetilde{Q}^{\mathrm{abs}}_{n_1}\right|^2}.
\end{equation}
Here we have used the weighted $L^2$-norm with weight function $\omega(\bv)$ so that the norm is simply the sum of squares of the coefficients. This error indicator can be considered as an \textit{a posteriori} estimation of the truncation error, since it depends on the numerical solution but only estimates the error of the collision term instead of the solution itself.

In the indicator above, we did not consider the third line of \eqref{eq:Delta_Q} since the contribution of this term is similar to $h$ times a constant (since $h(\bv)$ is the approximate bound of $|f^{(2)}(\bv) - \mM^{(2)}(\bv)|$). Such contribution has been covered by the term $\Qabs[f_M,h]$ and it is less meaningful to duplicate it in the error indicator.
 
\subsubsection{Computation of the coefficients $a_{lnn_1}^{n'}$} \label{sec:a_lnn1}
By the orthogonality of the basis functions \eqref{eq:orth}, we have
\begin{equation} \label{eq:a_lnn1^n'}
a_{lnn_1}^{n'} = \int_{\mathbb{R}^3} p_{lmn}^{\dagger}(\bv) \Qabs[\varphi_{lmn_1},\varphi_{00n'}](\bv) \,\mathrm{d}\bv
\end{equation}
for any $m = -l,\cdots,l$. For simplicity, here we have used
\begin{displaymath}
p_{lmn}(\bv) = \varphi_{lmn}(\bv) \omega(\bv),
\end{displaymath}
and $p_{lmn}$ is a polynomial of degree $l+2n$. Below we will also use $p_{lmn}^0$ to denote the polynomial $p_{lmn}$ with $\overline{\bu}$ set to be zero. In our calculation, we choose $m = 0$ so that all the functions are real and we can remove ``$\dagger$'' in \eqref{eq:a_lnn1^n'}. A well-known property of the collision integral is
\begin{equation} \label{eq:pq_int}
\begin{split}
& \int_{\mathbb{R}^3} p_{l0n}(\bv) \Qabs[\varphi_{l0n_1},\varphi_{00n'}](\bv) \,\mathrm{d}\bv \\
={} & \frac{1}{2} \int_{\mathbb{R}^3} \int_{\mathbb{R}^3} \int_{\bn \perp \bg} \int_0^{\pi} [p_{l0n}(\bv') + p_{l0n}(\bv_*') + p_{l0n}(\bv) + p_{l0n}(\bv_*)] B(|\bg|, \chi) \varphi_{l0n_1} (\bv) \varphi_{00n'}(\bv_*) \,\mathrm{d}\chi \,\mathrm{d}\bn \,\mathrm{d}\bv_* \,\mathrm{d}\bv.
\end{split}
\end{equation}
A classical approach \cite{Grad1949, kumar1966} to computing this integral is to define
\begin{displaymath}
\bg' = \bv' - \bv_*', \qquad \bh = \frac{\bv + \bv_*}{2},
\end{displaymath}
which yields
\begin{displaymath}
|\bg| = |\bg'|, \quad \bv = \bh + \frac{1}{2} \bg, \quad \bv_* = \bh - \frac{1}{2} \bg, \quad \bv' = \bh + \frac{1}{2} \bg', \quad \bv'_* = \bh - \frac{1}{2} \bg', \quad \mathrm{d}\bv \,\mathrm{d}\bv_* = \mathrm{d}\bg \,\mathrm{d}\bh.
\end{displaymath}
The purpose of these changes of variables is to convert the integral with respect to $\bv_*$ and $\bv$ to the integral with respect to $\bg$ and $\bh$. This requires us to express the polynomials in \eqref{eq:pq_int} by linear combinations of the basis polynomials of $\bg$ and $\bh$. This requires the following result:
\begin{theorem} \label{thm:pp}
For any non-negative integers $l$, $n$ and $n'$, it holds that
\begin{equation} \label{eq:vv_to_gh}
\begin{split}
& p_{l0n}\left( \bh + \frac{1}{2} \bg \right) p_{00n'} \left(\bh - \frac{1}{2} \bg\right) \\
={} & \sum_{\substack{l_1, l_2, n_1, n_2 \geqslant 0\\ l_1 + l_2 + 2(n_1 + n_2) = l + 2(n+n')}} \sum_{\substack{m_1 = -l_1, \cdots, l_1 \\ m_2 = -l_2, \cdots, l_2 \\ m_1 + m_2 = 0}} A_{lnn'}^{l_1 l_2 m_2 n_2} p_{l_1 m_1 n_1}(\sqrt{2}\bh) p_{l_2 m_2 n_2}^0\left( \frac{\bg}{\sqrt{2}} \right).
\end{split}
\end{equation}
The coefficients $A_{lnn'}^{l_1 l_2 m_2 n_2}$ are constants satisfying the recurrence relation:
\begin{equation} \label{eq:A_recur}
\begin{split}
& A_{ln,n'+1}^{l_1 l_2 m_2 n_2} =
  \frac{1}{\sqrt{(n'+1)(n'+3/2)}} \Bigg[
    \sum_{k=1}^2 \frac{\sqrt{n_k(n_k+l_k+1/2)}}{2}
      A_{lnn'}^{l_1l_2m_2,n_2-\delta_{2k}} \\
& \qquad +\sum_{\mu = -1}^1 (-1)^{\mu} \bigg(
      \sqrt{(l_1 + n_1 + 1/2)(l_2 + n_2 + 1/2)}
      \gamma_{l_1,m_2+\mu}^{-\mu} \gamma_{l_2,m_2+\mu}^{-\mu}
        A_{lnn'}^{l_1-1,l_2-1,m_2+\mu,n_2} \\
& \hspace{90pt} -(-1)^{\mu} \sqrt{n_1(l_2 + n_2 + 1/2)}
      \gamma_{-l_1-1,m_2+\mu}^{-\mu} \gamma_{l_2,m_2+\mu}^{-\mu}
        A_{lnn'}^{l_1+1,l_2-1,m_2+\mu,n_2} \\
& \hspace{90pt} -(-1)^{\mu} \sqrt{n_2(l_1 + n_1 + 1/2)}
      \gamma_{l_1,m_2+\mu}^{-\mu} \gamma_{-l_2-1,m_2+\mu}^{-\mu}
        A_{lnn'}^{l_1-1,l_2+1,m_2+\mu,n_2-1} \\
& \hspace{90pt} +\sqrt{n_1 n_2}
      \gamma_{-l_1-1,m_2+\mu}^{-\mu} \gamma_{-l_2-1,m_2+\mu}^{-\mu}
        A_{lnn'}^{l_1+1,l_2+1,m_2+\mu,n_2-1}
    \bigg)
  \Bigg],
\end{split}
\end{equation}
where $n_1 = (l - l_1 - l_2) / 2 + (n + n' - n_2)$, and $\gamma_{lm}^{\mu}$ is defined by
\begin{equation} \label{eq:gamma_lm}
\gamma_{lm}^{\mu} =
  \sqrt{\frac{[l + (2\delta_{1,\mu} - 1) m + \delta_{1,\mu}]
      [l - (2\delta_{-1,\mu} - 1) m + \delta_{-1,\mu}]}
    {2^{|\mu|} (2l-1)(2l+1)}},
\end{equation}
and $A_{lnn'}^{l_1 m_1 n_1, l_2 m_2 n_2}$ is regarded as zero if any of the following conditions are violated:
\begin{displaymath}
\text{(1) } l_1, l_2, n_2 \text{ are positive integers;} \qquad \text{(2) } l - l_1 - l_2 + 2(n + n' - n_2) \geqslant 0; \qquad \text{(3) } |m_2| \leqslant \min(l_1, l_2).
\end{displaymath}
\end{theorem}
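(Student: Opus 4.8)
The plan is to establish \eqref{eq:vv_to_gh} by induction on $n'$, using the fact that both sides are polynomials of the same total degree $l + 2(n+n')$, and that the right-hand side is written in a basis (products $p_{l_1 m_1 n_1}(\sqrt{2}\bh)\, p_{l_2 m_2 n_2}^0(\bg/\sqrt{2})$) that spans the space of such polynomials. The base case $n' = 0$ should be the previously known expansion of $p_{l0n}(\bh + \tfrac12\bg)\, p_{000}(\bh - \tfrac12\bg)$ — here $p_{000}$ is (a constant multiple of) the constant polynomial, so this is essentially the translation/addition formula for the basis polynomials $p_{l0n}$ in the single variable $\bv = \bh + \tfrac12\bg$, rescaled; this is a classical computation (Grad, Kumar) and I would cite it or sketch it via the generating-function/recurrence identities for Laguerre polynomials times solid harmonics.

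For the inductive step, I would increment $n'$ by multiplying the $n'$ identity by the radial factor that carries $\varphi_{00n'} \to \varphi_{00,n'+1}$. Concretely, $p_{00,n'+1}(\bw)$ is obtained from $p_{00n'}(\bw)$ by acting with a first-order differential/multiplication operator coming from the three-term recurrence for Laguerre polynomials $L_{n'}^{(1/2)}$; equivalently there is an identity of the form $\sqrt{(n'+1)(n'+3/2)}\, p_{00,n'+1}(\bw) = \big(c_1 |\bw|^2 + c_2 + c_3\,\text{(lower)}\big) p_{00n'}(\bw)$. Substituting $\bw = \bh - \tfrac12\bg$ and expanding $|\bh - \tfrac12\bg|^2 = |\bh|^2 - \bh\cdot\bg + \tfrac14|\bg|^2$ turns the recursion into: multiply the known expansion of $p_{l0n}(\bh+\tfrac12\bg)p_{00n'}(\bh-\tfrac12\bg)$ by $|\bh|^2$, by $|\bg|^2$, and by $\bh \cdot \bg$. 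The first two are diagonal-ish operations handled by the scalar Laguerre recurrence in the $\bh$- and $\bg$-variables respectively (producing the $\sqrt{n_k(n_k+l_k+1/2)}$ factors after renormalization). The genuinely new ingredient is the cross term $\bh\cdot\bg$: since $\bh\cdot\bg = \sum_\mu (-1)^\mu (\bh)_{1,\mu}(\bg)_{1,-\mu}$ in the spherical basis and multiplication of a solid-harmonic-type polynomial $p_{l_k m_k n_k}$ by a component of the position vector raises or lowers $l_k$ by one (and shifts $n_k$ accordingly), one gets a sum over $\mu \in \{-1,0,1\}$ and over the four sign choices $(l_1 \pm 1, l_2 \pm 1)$, with Clebsch–Gordan-type coefficients. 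Matching these coefficients against the known formulas for $\bv \mapsto v_\mu\, p_{lmn}(\bv)$ — whose structure constants are exactly the $\gamma_{lm}^\mu$ in \eqref{eq:gamma_lm}, up to the $\sqrt{n_k(l_k+n_k+1/2)}$ radial factors — and collecting terms with fixed output indices yields precisely \eqref{eq:A_recur}, after reindexing via $n_1 = (l-l_1-l_2)/2 + (n+n'-n_2)$ (forced by degree-counting). The vanishing conventions (1)–(3) are just the statement that the output index is a legitimate basis label: $l_1,l_2,n_2 \geqslant 0$, total degree nonnegative, and $|m_2| \le \min(l_1,l_2)$ for $Y_{l_2}^{m_2}$ to exist while $m_1 = -m_2$ forces $|m_1| \le l_1$.

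The main obstacle I anticipate is bookkeeping the spherical-basis algebra cleanly: one must fix conventions for $Y_l^m$, for the "raising/lowering by a vector component" identities, and for the relation between $p_{lmn}$ and solid harmonics, and then verify that the four $\gamma$-products and the sign $(-1)^\mu$ (and the extra $(-1)^\mu$ inside the bracket) come out with exactly the signs and normalizations displayed. I would isolate this into one lemma: for $\mu \in \{-1,0,1\}$, an explicit formula for $\chi_\mu\, p^0_{lmn}(\bx)$ as a combination of $p^0_{l-1,m+\mu,n}$, $p^0_{l+1,m+\mu,n}$, $p^0_{l-1,m+\mu,n+1}$, $p^0_{l+1,m+\mu,n-1}$ (where $\chi_\mu$ is the $\mu$-th spherical component of $\bx$), with coefficients built from $\gamma_{lm}^\mu$ and $\sqrt{n(\cdot)}$ factors. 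Once that lemma is in place, the inductive step is a finite linear-algebra manipulation: apply the lemma in each of $\bh$ and $\bg$ to the $\bh\cdot\bg$ term, apply the scalar Laguerre recurrence to the $|\bh|^2$ and $|\bg|^2$ terms, add everything, divide by the normalization $\sqrt{(n'+1)(n'+3/2)}$, and read off the coefficient of each surviving basis product. Degree homogeneity guarantees no terms outside the stated index range appear, and the symmetry $m_1 + m_2 = 0$ is preserved at every step because $\bh\cdot\bg$, $|\bh|^2$, $|\bg|^2$ are all rotation-invariant (total $m = 0$).
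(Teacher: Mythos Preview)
Your proposal is correct and follows essentially the same strategy as the paper: induct on $n'$ via the Laguerre recurrence for $p_{00,n'+1}$, expand $|\bh - \tfrac12\bg|^2 = |\bh|^2 + \tfrac14|\bg|^2 - \bh\cdot\bg$, handle the first two pieces with the scalar Laguerre recurrence, and handle $\bh\cdot\bg$ via the spherical-component multiplication identity whose structure constants are the $\gamma_{lm}^{\mu}$. The only cosmetic difference is that the paper works through the integral expression \eqref{eq:A_integral} for $A_{lnn'}^{l_1l_2m_2n_2}$ and uses orthogonality to discard the ``lower-degree'' remainder in the Laguerre step and the ``higher-degree'' remainders in the multiplication identities, whereas you manipulate the polynomial identity directly and invoke the exact degree constraint $l_1+l_2+2(n_1+n_2)=l+2(n+n')$ to the same effect; both bookkeeping devices are equivalent here.
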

The proof of this theorem is similar to the proof of \cite[Proposition 3]{Cai2015}, and the details can be found in Appendix \ref{sec:proof}. The recurrence relation \eqref{eq:A_recur} helps compute the coefficients in the expansion \eqref{eq:vv_to_gh}. The initial condition corresponds to the case $n' = 0$, for which $p_{00n'}(\cdot) \equiv 1$ and the corresponding coefficients $A_{lnn'}^{l_1 l_2 m_2 n_2}$ have been derived in \cite[Proposition 3]{Cai2015}.

By \eqref{eq:vv_to_gh}, we have
\begin{displaymath}
\begin{split}
& p_{l0n}(\bv') + p_{l0n}(\bv_*') +  p_{l0n}(\bv) +  p_{l0n}(\bv_*) \\
={} & \sum_{\substack{l_1, l_2, n_1, n_2 \geqslant 0\\ l_1 + l_2 + 2(n_1 + n_2) = l + 2n}} \sum_{\substack{m_1 = -l_1, \cdots, l_1 \\ m_2 = -l_2, \cdots, l_2 \\ m_1 + m_2 = 0}} A_{ln0}^{l_1 l_2 m_2 n_2} p_{l_1 m_1 n_1}(\sqrt{2}\bh) \times {} \\
& \hspace{120pt} \left[ p_{l_2 m_2 n_2}^0\left( \frac{\bg'}{\sqrt{2}} \right) + p_{l_2 m_2 n_2}^0\left( -\frac{\bg'}{\sqrt{2}} \right) + p_{l_2 m_2 n_2}^0\left( \frac{\bg}{\sqrt{2}} \right) + p_{l_2 m_2 n_2}^0\left( -\frac{\bg}{\sqrt{2}} \right)\right] \\
={} & \sum_{\substack{l_1, l_2, n_1, n_2 \geqslant 0\\ l_1 + l_2 + 2(n_1 + n_2) = l + 2n}} \sum_{\substack{m_1 = -l_1, \cdots, l_1 \\ m_2 = -l_2, \cdots, l_2 \\ m_1 + m_2 = 0}} [1 + (-1)^{l_2}] A_{ln0}^{l_1 l_2 m_2 n_2} p_{l_1 m_1 n_1}(\sqrt{2}\bh)
\left[ p_{l_2 m_2 n_2}^0 \left( \frac{\bg'}{\sqrt{2}} \right) + p_{l_2 m_2 n_2}^0\left( \frac{\bg}{\sqrt{2}} \right)\right],
\end{split}
\end{displaymath}
where we have used the property that $p_{lmn}^0$ is odd/even if $l$ is odd/even. By the equality \cite[Eq. (7.1)]{Ikenberry1956}
\begin{displaymath}
\int_{\bn \perp \bg} p_{lmn}^0\left( \frac{\bg'}{\sqrt{2}} \right) \,\mathrm{d}\bn = 2\pi p_{lmn}^0\left(\frac{\bg}{\sqrt{2}} \right) P_l(\cos \chi),
\end{displaymath}
we conclude that
\begin{equation} \label{eq:n_int}
\begin{split}
& \int_{\bn \perp \bg} [p_{l0n}(\bv') + p_{l0n}(\bv_*') +  p_{l0n}(\bv) +  p_{l0n}(\bv_*)] \,\mathrm{d}\bn \\
={} & \sum_{\substack{l_1, l_2, n_1, n_2 \geqslant 0\\ l_1 + l_2 + 2(n_1 + n_2) = l + 2n}} \sum_{\substack{m_1 = -l_1, \cdots, l_1 \\ m_2 = -l_2, \cdots, l_2 \\ m_1 + m_2 = 0}} 2\pi [1 + (-1)^{l_2}][P_{l_2}(\cos \chi) + 1] A_{ln0}^{l_1 l_2 m_2 n_2} p_{l_1 m_1 n_1}(\sqrt{2}\bh) p_{l_2 m_2 n_2}^0\left( \frac{\bg}{\sqrt{2}} \right).
\end{split}
\end{equation}
Next, we perform the following operations:

\begin{enumerate}
\item Insert \eqref{eq:n_int} into \eqref{eq:pq_int};
\item Use \eqref{eq:vv_to_gh} to expand $\varphi_{l0n_1}(\bv) \varphi_{00n'}(\bv_*)$ in \eqref{eq:pq_int};
\item Use the orthogonality \eqref{eq:orth} to integrate with respect to $\bh$.
\item Represent $\bg$ using spherical coordinates as $g \boldsymbol{\omega}$ and use the orthogonality of spherical harmonics to integrate with respect to $\boldsymbol{\omega}$.
\end{enumerate}
We omit the details of these steps since they are standard procedures to compute the moments of the collision operators. Afterward, we obtain
\begin{equation} \label{eq:a}
\begin{split}
a_{lnn_1}^{n'} = & \hspace{-8pt} \sum_{\substack{l_1, l_2 \geqslant 0 \\ l - (l_1 + l_2) \text{ is even} \\ l_1 + l_2 \leqslant l + 2 \min(n,n_1 + n')}} \sum_{m_2 = -\min(l_1,l_2)}^{\min(l_1,l_2)}\sum_{\substack{n_2, n_2' \geqslant 0 \\ n - n_2 = n_1 + n' - n_2'\\n_2 \leqslant (l-l_1-l_2)/2+n}} \sqrt{\frac{n_2! n_2'!}{\Gamma(l_2 + n_2 + 3/2) \Gamma(l_2 + n_2' + 3/2)}} A_{ln0}^{l_1 l_2 m_2 n_2} A_{ln_1 n'}^{l_1 l_2 m_2 n_2'} \\
& \times \pi [1 + (-1)^{l_2}] \underline{\int_0^{+\infty} \int_0^{\pi} B(\sqrt{4\bar{\theta} r},\chi) [P_{l_2}(\cos\chi) + 1] L_{n_2}^{(l_2 + 1/2)}(r) L_{n_2'}^{(l_2 + 1/2)}(r) r^{l_2 + 1/2} \exp(-r) \,\mathrm{d}\chi \,\mathrm{d}r},
\end{split}
\end{equation}
where $r$ comes from the change of variables $r = g^2/(4\bar{\theta})$.

The integrals with respect to $\chi$ and $r$ depend on the collision kernel. For the variable-hard-sphere (VHS) model, the collision kernel is
\begin{equation}\label{eq:vhs}
B(g,\chi) = g^{\nu} \sin \chi \quad \text{for } \nu \in [0,1].
\end{equation}
In this case, the underlined integral in the equation above can be computed explicitly as
\begin{equation} \label{eq:underlined}
\begin{split}
& 2^{\nu+1} (\delta_{0,l_2} + 1) \bar{\theta}^{\nu/2} \int_0^{+\infty} L_{n_2}^{(l_2 + 1/2)}(r) L_{n_2'}^{(l_2 + 1/2)}(r) r^{l_2 + (\nu+1)/2} \exp(-r) \,\mathrm{d}r \\
={} & (-1)^{n_2 + n_2'} 2^{\nu+1} (\delta_{0l_2} + 1) \bar{\theta}^{\nu/2} \Gamma\left(l_2 + 1 + \frac{\nu+1}{2} \right) \sum_{k=0}^{\min(n_2,n_2')} 
\begin{pmatrix} \nu/2 \\ n_2 - k \end{pmatrix}
\begin{pmatrix} \nu/2 \\ n_2' - k \end{pmatrix}
\begin{pmatrix} k + l_2 + (\nu+1)/2 \\ k \end{pmatrix}.
\end{split}
\end{equation}
Note that the coefficients $a_{lnn_1}^{n'}$ can all be precomputed before the simulation. In our implementation, we ignore the coefficient $\bar{\theta}^{\nu/2}$ since it only introduces a universal constant to the indicator.

\subsection{Adaptive strategy} \label{sec:adaptive}
With the error indicator defined by \eqref{eq:indicator}, we compute this quantity for the distribution function on each spatial grid cell after every time step. For distribution functions with a large indicator, we increase the value of $M_0$ at the next time step, and vice versa. In our implementation, in order that the numerical solution does not oscillate due to the self-adaptation, we would like to maintain the stability of the collision model by avoiding the drastic change of the value of $M_0$. To this end, we adopt the following two strategies:
\begin{itemize}
\item Instead of a single threshold for the indicator like in most adaptive methods, we introduce two thresholds $\epsilon_1$ and $\epsilon_2$. If the error indicator of a certain distribution function lies between $(\epsilon_1, \epsilon_2)$, we keep the value of $M_0$ unchanged.
\item The value of $M_0$ changes only by $1$ at each time step. More precisely, if the error indicator exceeds $\epsilon_2$, we increase $M_0$ by $1$; if the error indicator falls below $\epsilon_1$, we reduce $M_0$ by $1$.
\end{itemize}
In general, if the range of the interval $(\epsilon_1, \epsilon_2)$ is wider, $M_0$ is more stable and the algorithm is less adaptive. A larger lower bound $\epsilon_1$ makes it easier for $M_0$ to drop; a larger upper bound $\epsilon_2$ makes it harder for $M_0$ to increase. In many applications, the bounds do not need to be too tight since the first few moments (e.g., density, velocity, and temperature) are usually not very sensitive about small changes of the collision models. This is why some simpler collision models such as the ES-BGK and the Shakhov models can still provide decent numerical results for these macroscopic variables.

To select the proper values of $\epsilon_1$ and $\epsilon_2$, we adopt the following strategy:
\begin{enumerate}
\item Do a test run with a small $M_0$ (e.g. $M_0 = 3$) being fixed without self-adaptation. Calculate the indicators for all time steps on all grid cells, and find its maximum value $\epsilon_{\max}$.
\item Use $\epsilon_{\max}$ as a reference value and choose the initial guesses of $\epsilon_1$ and $\epsilon_2$. They should be less than but not too far away from $\epsilon_{\max}$, e.g. $\epsilon_1 = \epsilon_{\max}/4$ and $\epsilon_2 = \epsilon_{\max}/2$.
\item Do a test run for the self-adaptive algorithm with the chosen $\epsilon_1$ and $\epsilon_2$, and then reduce $\epsilon_1$ (e.g. set $\epsilon_1$ to be $\epsilon_1 / 2$).
\item Do another test run for the current $\epsilon_1$ and $\epsilon_2$.
\item Compare the results of the two most recent runs. Go to the next step if the two results are sufficiently close to each other; otherwise, reduce $\epsilon_1$ again and return to the previous step.
\item Keep $\epsilon_1$ fixed and reduce the value of $\epsilon_2$ (e.g. set $\epsilon_2$ to be $\epsilon_2 / 2$).
\item Compare the results of the two most recent runs. Stop if the two results are sufficiently close to each other; otherwise, reduce the value of $\epsilon_2$ and check if $\epsilon_1 < \epsilon_2$. If so, return to Step 6; otherwise, reduce 
$\epsilon_1$ and return to Step 3.
\end{enumerate}
Here, the purpose of the first step is to provide a general range of the acceptable error indicator. Then, based on the initial guess of $\epsilon_1$ and $\epsilon_2$ in the second step, we first determine the lower bound by reducing $\epsilon_1$ until the solution looks stable, and then apply the same approach to $\epsilon_2$ to find a suitable upper bound. All test runs can be carried out on a coarse grid to save computational time. An example will be given in Section \ref{sec:colliding_flow}.

\subsection{Outline of the algorithm}
As a summary, below we list out the general steps of our algorithm:
\begin{enumerate}
\item Precompute the coefficients $s_{ln}^{n'}$ according to \eqref{eq:sn}.
\item Precompute the coefficients $A_{lnn'}^{l_1 l_2 m_2 n_2}$ according to \eqref{eq:A_recur}.
\item Precompute the coefficients $a_{lnn_1}^{n'}$ according to \eqref{eq:a} with the underlined term replaced by \eqref{eq:underlined}.
\item \label{itm:solve} Solve the Boltzmann equation by one time step. Terminate if the final time is reached.
\item \label{itm:bounds} For each distribution function, use \eqref{eq:h} and \eqref{eq:coeG1} to find the bounding functions $h$, $h^{(1)}$ and $h^{(2)}$, which bound $f^{(2)} - \mM^{(2)}$, $f^{(1)} - \mM^{(1)}$ and $\mM^{(2)}$, respectively.
\item \label{itm:indicator} Use \eqref{eq:Qabs_coef}\eqref{eq:a_lnn1}\eqref{eq:tQ}\eqref{eq:indicator} to compute the error indicator for each collision term.
\item Perform self-adaptation on each grid cell: if the error indicator is greater than the threshold $\epsilon_1$, we increase $M_0$ by $1$; if the error indicator is less than the threshold $\epsilon_2$, we decrease $M_0$ by $1$ if $M_0 > 3$.
\item Return to Step \ref{itm:solve}.
\end{enumerate}
In the algorithm, we have required that $M_0$ is no less than $3$, which is the smallest $M_0$ that includes the heat flux in the quadratic part of the collision operator. This ensures that the Navier-Stokes limit can always be correctly captured. The computation of the indicator lies in Steps \ref{itm:bounds} and \ref{itm:indicator}. In these two steps, the computation of \eqref{eq:Qabs_coef} and \eqref{eq:a_lnn1} has complexity $O(M^4)$, and the computation \eqref{eq:coeG1} requires $O(|\Omega|M^3)$ operations, where $|\Omega|$ denotes the number of quadrature points on the sphere. These parts take up most of the computational time in Steps \ref{itm:bounds} and \ref{itm:indicator}. Note that the computational cost of the indicator depends on $M$ instead of $M_0$, since the computation of $\Qabs(\cdot, \cdot)$ involves the complete distribution function instead of only the low-frequency part, so that the error due to the inaccuracy of the high-frequency part can be captured. Nevertheless, as we will see in the next section, such a cost is quite small compared to the evaluation of the collision operator for a large $M_0$.

\section{Numerical scheme and experiments}
\label{sec:num}
We are now ready to integrate the adaptation technique into the Boltzmann solver and carry out numerical experiments. In what follows, we will first brief our numerical algorithm to solve the system \eqref{eq:semi_discrete_SSP}, and then present several numerical experiments to demonstrate the effectiveness of the proposed indicator. 

\subsection{Numerical scheme}
For convenience, we will only provide the numerical algorithm for the
spatially one-dimensional case (the velocity space is still three-dimensional), where we assume that
\begin{equation}
    \label{eq:1D}
    \frac{\partial \mathbf{f}}{\partial x_2}  =  \frac{\partial \mathbf{f}}{\partial x_3} = 0.
\end{equation}
The algorithm can be naturally generalized to the multi-dimensional
case with uniform grids. Suppose the spatial domain
$\Omega \subset \mathbb{R}$ is discretized by a uniform grid with cell
size $\Delta x$. Using $\mathbf{f}_j^n$ to
approximate the average of $\mathbf{f}$ over the
$j$th grid cell $[x_{j-1/2}, x_{j+1/2}]$ at time $t^n$, we can solve
the system \eqref{eq:semi_discrete_SSP} by the following finite volume
method with time step size $\Delta t$:
 \begin{equation}
     \label{eq:numerical_sch}
     \mathbf{f}_j^* = \mathbf{f}_j^n - \frac{\Delta t}{\Delta x} [\mathbf{F}_{j+1/2}^n -\mathbf{F}_{j-1/2}^n], \qquad \mathbf{f}_j^{n+1} = \mathbf{f}_j^* + \Delta t \widetilde{\mathbf{Q}}(M_0; \mathbf{f}_j^n),
 \end{equation}
 where $\widetilde{\mathbf{Q}}(M_0; \mathbf{f}_j^n)$ is the modified
 collision operator defined in \eqref{eq:semi_discrete_SSP}. The
 numerical fluxes $\mathbf{F}_{j\pm 1/2}^n$ are chosen according to
 the HLL scheme \cite{HLL}:
\begin{equation}
    \label{eq:HLL}
    \mathbf{F}_{j+1/2}^n = 
    \frac{\lambda^R\mathbf{A}_1 \mathbf{f}_j^n - \lambda^L\mathbf{A}_1 \mathbf{f}_{j+1}^n + \lambda^R \lambda^L \Big(\mathbf{f}_{j+1}^n - \mathbf{f}_j^n\Big)}{\lambda^R - \lambda^L},
\end{equation}
where $\lambda^R$ and $\lambda^L$ are the minimum and maximum
eigenvalues of $\mathbf{A}_1$, respectively. Precisely, we have

\begin{equation}
\lambda^L = \bar{u}_1 - C_{M+1} \sqrt{\bar{\theta}} , \qquad \lambda^R = \bar{u}_1 + C_{M+1} \sqrt{\bar{\theta}},
\end{equation}
and $C_{M+1}$ is the largest zero of the Hermite polynomial of degree
$M+1$. Here the parameters are always chosen such that $\lambda^L < 0$
and $\lambda^R > 0$ to avoid advection only in one direction. Besides,
the time step size is determined by the CFL condition
\begin{equation}
    \label{eq:CFL}
    \Delta t \frac{|\bar{u}_1| + C_{M+1} \sqrt{\bar{\theta}}}{\Delta x} = {\rm CFL} < 1.
\end{equation}
In our actual implementation, we have upgraded this scheme to the second
order by linear reconstruction with minmod limiter, Heun's time
integrator, and the Strang splitting. Such strategies are standard
techniques and can be found in many textbooks
(e.g. \cite{Leveque2002}).

\subsection{One-dimensional numerical examples}
In this section, we present two numerical examples, both of which use
the variable hard sphere model with $\nu = 5/9$ (see
\eqref{eq:vhs}). In our simulation, in order to prevent the
computational cost from getting out of control, we set a cap for the
value of $M_0$ to be $15$, and we use the non-adaptive results with $M_0 = 15$ being fixed
as our reference solution. All the numerical tests in this section
are carried out on a desktop with CPU model
Intel\textsuperscript{\textregistered} Core\texttrademark{} i7-7600U.

\subsubsection{Colliding flow} \label{sec:colliding_flow}
We consider the colliding flow with the initial condition 
\begin{equation}
    \label{eq:ini_ex1_1}
    f(x, \boldsymbol{v}, 0) = \frac{\rho(x)}{(2 \pi \theta(x))^{3/2}} \exp\left(-\frac{|\bv - \boldsymbol{u}(x)|^2}{2 \theta(x)}\right)
\end{equation}
with 
\begin{equation}
    \label{eq:ini_ex1_2}
    \rho(x) = 1, \qquad \bu(x) = \left\{\begin{array}{cc}
    (1, 0, 0)^T, & {\rm if}~ x < 0,  \\
    (-1, 0, 0)^T, & {\rm if}~ x > 0, 
    \end{array}  \right. \qquad \theta(x) = 1/3.
\end{equation}
We scale the collision term such that the Knudsen number equals
$0.5$. The initial condition consists of two equilibrium flows with
the same temperature moving in opposite directions, and it is expected
that the collision of the two Maxwellians will create some
non-equilibrium effects, which require a relatively large $M_0$ to
accurately capture the flow states.

The computational domain is set as $[-20, 20]$. In \eqref{eq:truncated_series}, we choose
$M$ to be $30$, and $\bar{\bu}$ and $\bar{\theta}$ are set to be
$\mathbf{0}$ and $1$, respectively. To determine the values of the thresholds $\epsilon_1$ and $\epsilon_2$, we follow the strategy in Section \ref{sec:adaptive} and carry out a test run for $M_0$ fixed to be $3$ up to $t = 15$. The cell size is chosen to be $\Delta x = 0.4$ in all the test runs below. The numerical solutions of this test run at different times are given in Figure \ref{fig:ex1_M03}, which plots the density $\rho$ and heat flux $\boldsymbol{q}$ of the gas, which are defined by
\begin{equation}
\label{eq:moments1}
\rho = \int_{\mathbb{R}^3} f(\bv) \,\mathrm{d}\bv, \qquad
\boldsymbol{q} = \frac{1}{2} \int_{\mathbb{R}^3} |\bv-\bu|^2 (\bv - \bu) f(\bv) \,\mathrm{d}\bv,
\end{equation}
where $\bu$ is the average velocity of gas molecules:
\begin{equation} \label{eq:moments_u}
\bu = \frac{1}{\rho} \int_{\mathbb{R}^3} \bv f(\bv) \,\mathrm{d}\bv.
\end{equation}
For this one-dimensional flow, only the first component of $\boldsymbol{q}$ is plotted. Due to the insufficient resolution of the solution and the small value of $M_0$, the peak values of both density and heat flux are not well captured, but the general behavior of the flow is still qualitatively correct: the collision of the flow generates two shock waves moving in opposite directions, and the heat flux is nonzero inside these shock waves.
During the test run, we record the maximum value of the indicator, which turns out to be
\begin{displaymath}
\epsilon_{\max} = 15.6.
\end{displaymath}
This leads us to the following initial guess of $\epsilon_1$ and $\epsilon_2$:
\begin{displaymath}
(\epsilon_1, \epsilon_2) = (4,8) \approx (\epsilon_{\max}/4, \epsilon_{\max}/2).
\end{displaymath}

\begin{figure}[!htb]
  \centering
  \subfloat[$\rho$ ]{\includegraphics[width=0.49\textwidth,clip]{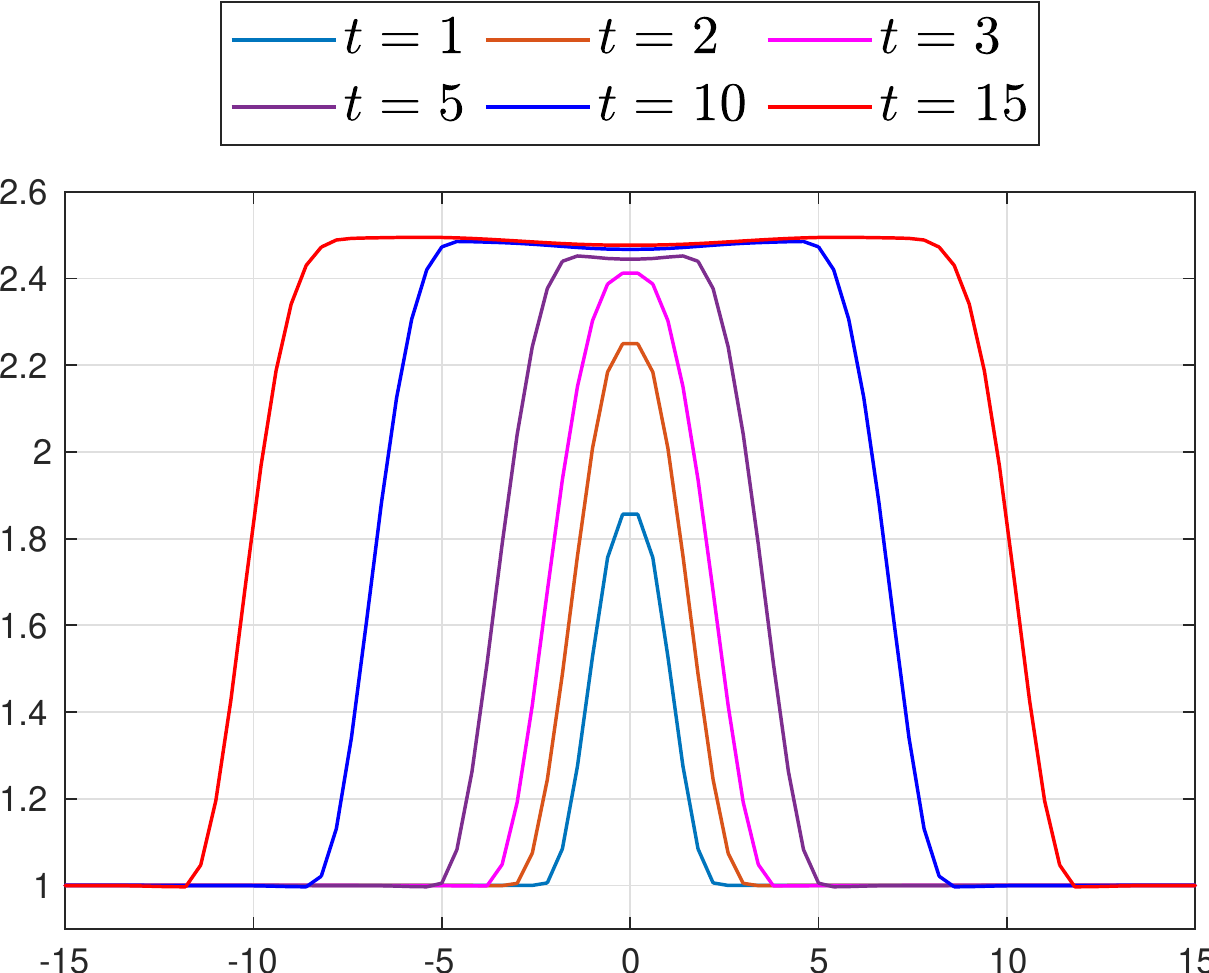}}\hfill
    \subfloat[$q_1$]{\includegraphics[width=0.49\textwidth,clip]{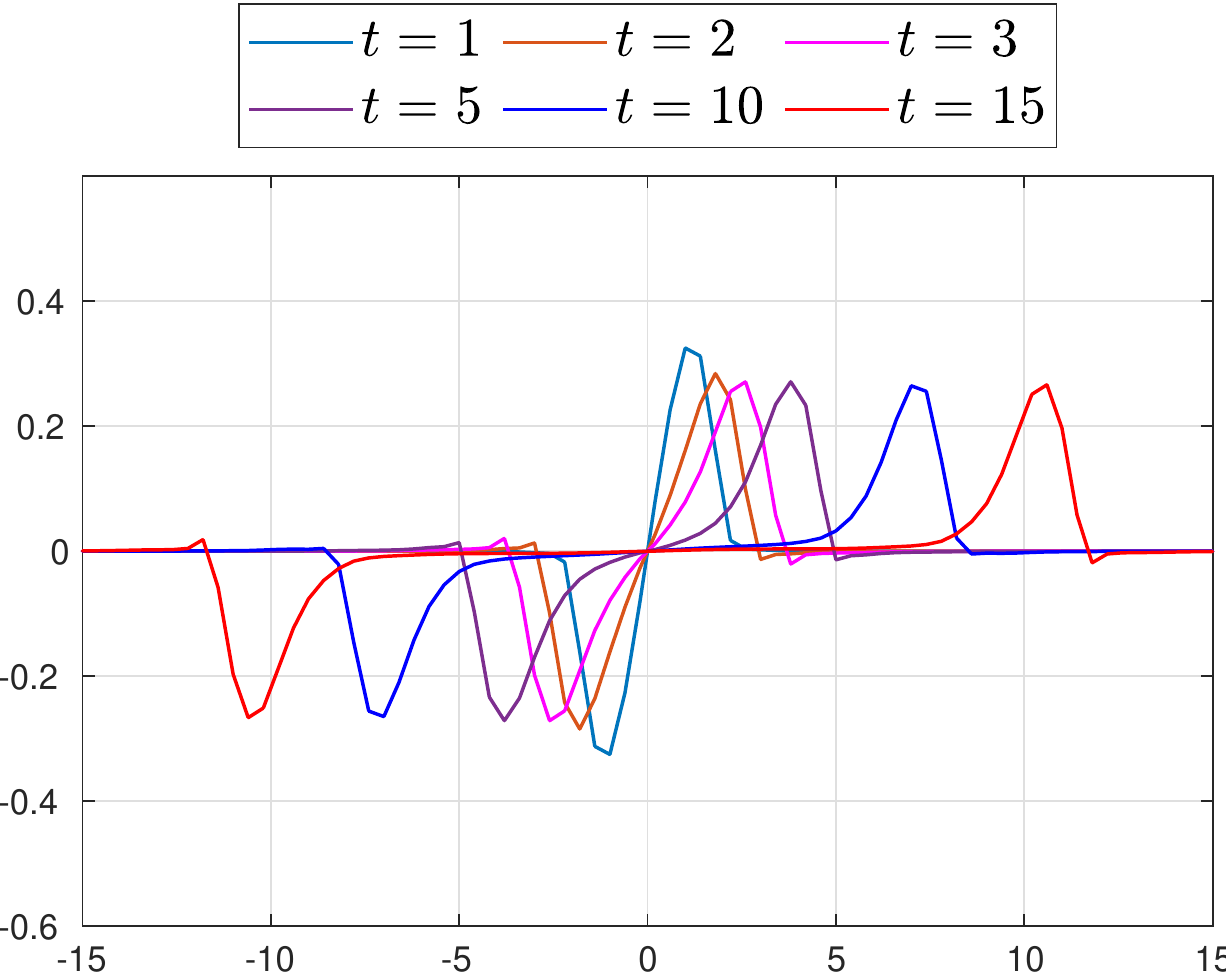}} 
  \caption{Plots of density and heat flux for the colliding flow with $M_0 = 3$ (fixed) at different times.}
  \label{fig:ex1_M03}
\end{figure}

We now carry out a test with adaptive collision operators using this pair of parameters, and then reduce the value of $\epsilon_1$ to $2$ and run another test. The comparison of these two tests is given in Figure \ref{fig:ex1_ep_1}, where we only present the non-equilibrium variable $q_1$ that shows a more significant difference than the equilibrium variables. It can be seen from Figure \ref{fig:ex1_ep_1_M0} that the smaller value of $\epsilon_1$ leads to slightly greater $M_0$ inside the shock wave. Since the graph of $q_1$ in Figure \ref{fig:ex1_ep_1_q1} still shows quite some difference between the two solutions, we further reduce $\epsilon_1$ by a half and carry out another test run. The comparison of the solutions is given in Figure \ref{fig:ex1_ep_2}. We are now satisfied with the small difference and will fix the value of $\epsilon_1$ to be $1$.

\begin{figure}[!htb]
  \centering
  \subfloat[$q_1$. Solid lines: $(\epsilon_1,\epsilon_2) = (4, 8)$. Dashed lines: $(\epsilon_1,\epsilon_2) = (2,8)$.\label{fig:ex1_ep_1_q1}]{\includegraphics[width=0.49\textwidth,height=0.35\textwidth, clip]{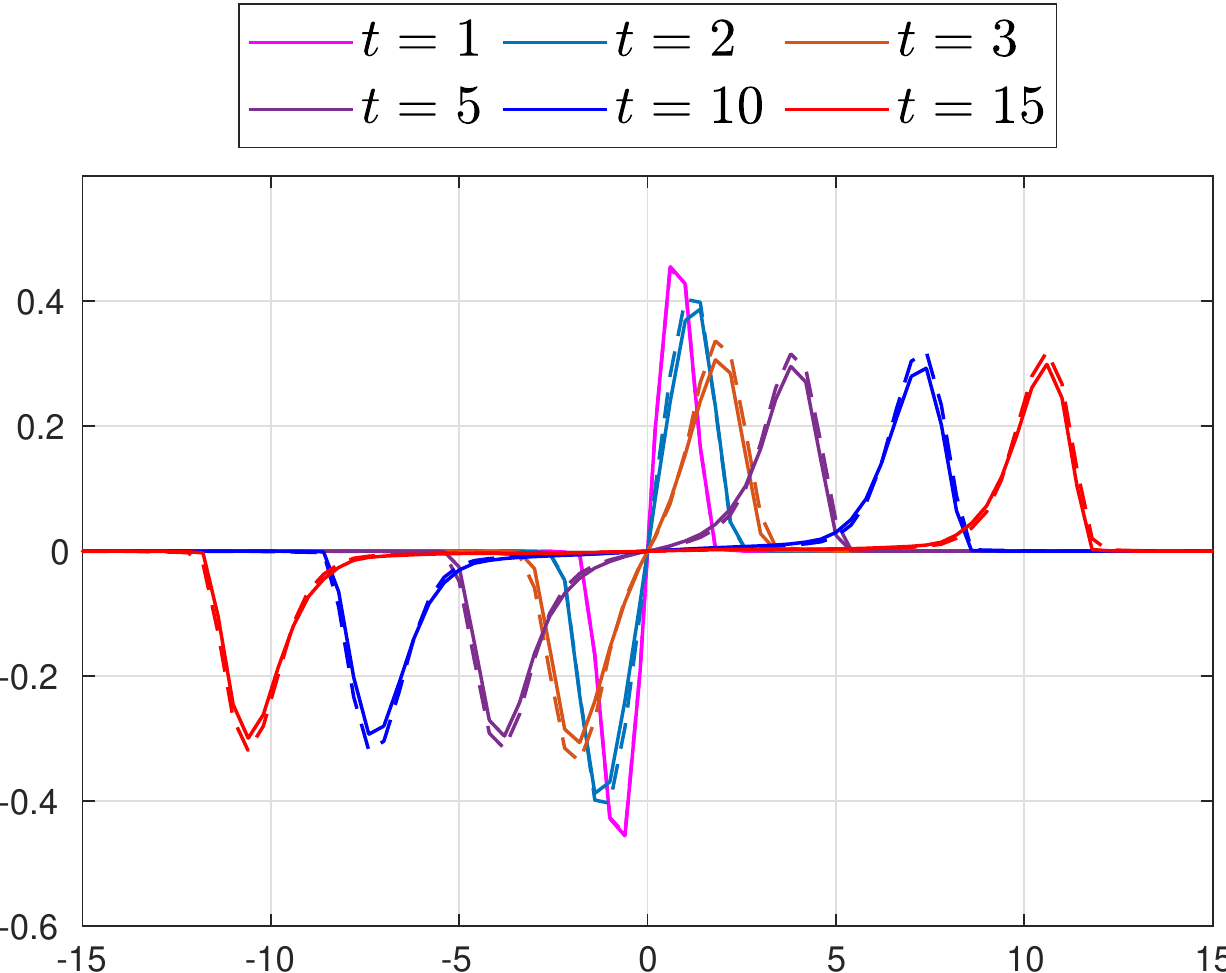}}\hfill
  \subfloat[$M_0$. Solid lines: $(\epsilon_1,\epsilon_2) = (4, 8)$. Dashed lines: $(\epsilon_1,\epsilon_2) = (2,8)$.\label{fig:ex1_ep_1_M0}]{\includegraphics[width=0.49\textwidth,height=0.35\textwidth, clip]{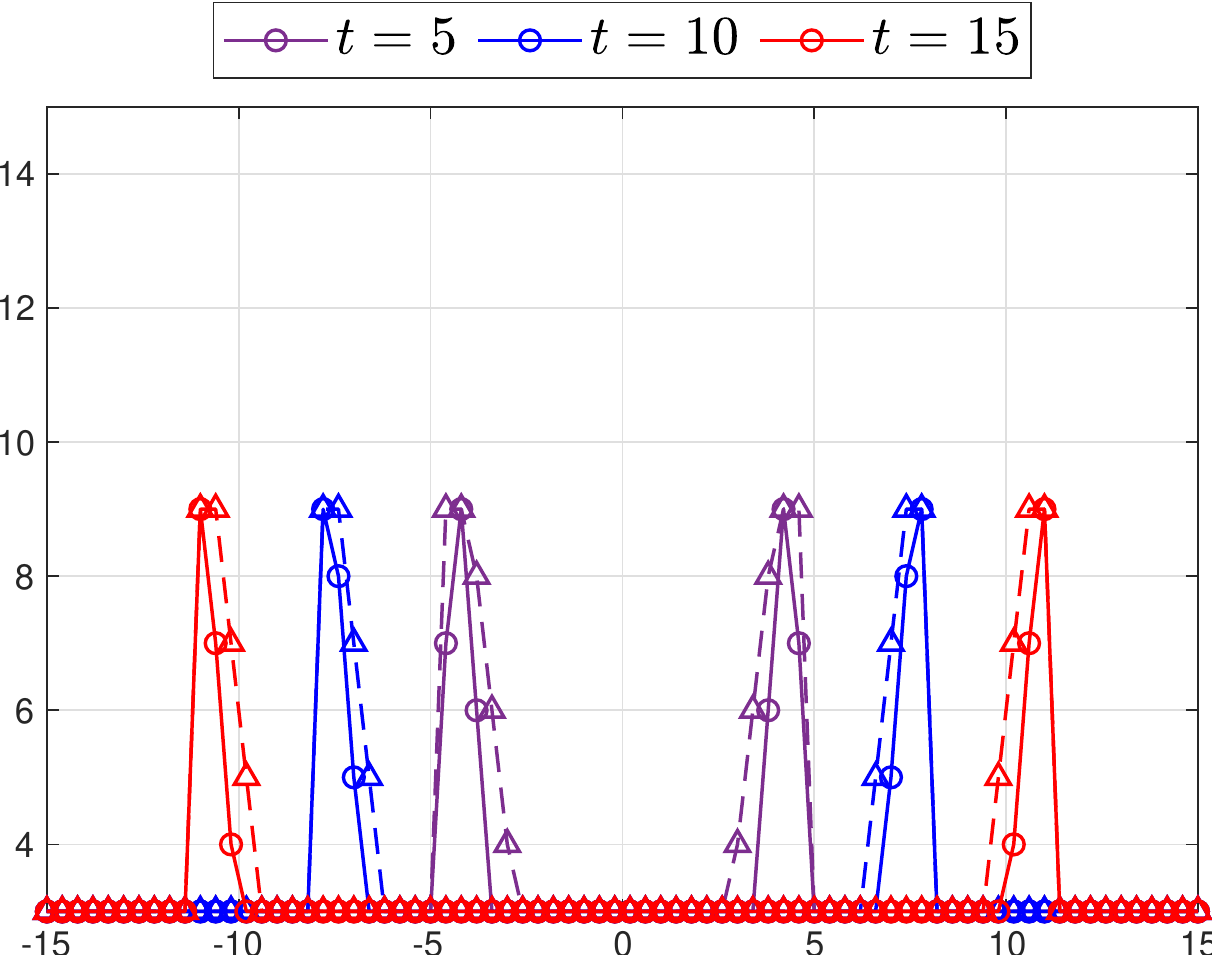}} 
   \caption{Profiles of heat flux and distributions of $M_0$ for the colliding flow with two different pairs of indicator thresholds $(\epsilon_1,\epsilon_2) = (4, 8)$ and $(\epsilon_1,\epsilon_2) = (2,8)$.}
  \label{fig:ex1_ep_1}
\end{figure}

\begin{figure}[!htb]
  \centering
  \subfloat[$q_1$. Solid lines: $(\epsilon_1,\epsilon_2) = (2, 8)$. Dashed lines: $(\epsilon_1,\epsilon_2) = (1,8)$.]{\includegraphics[width=0.49\textwidth,height=0.35\textwidth, clip]{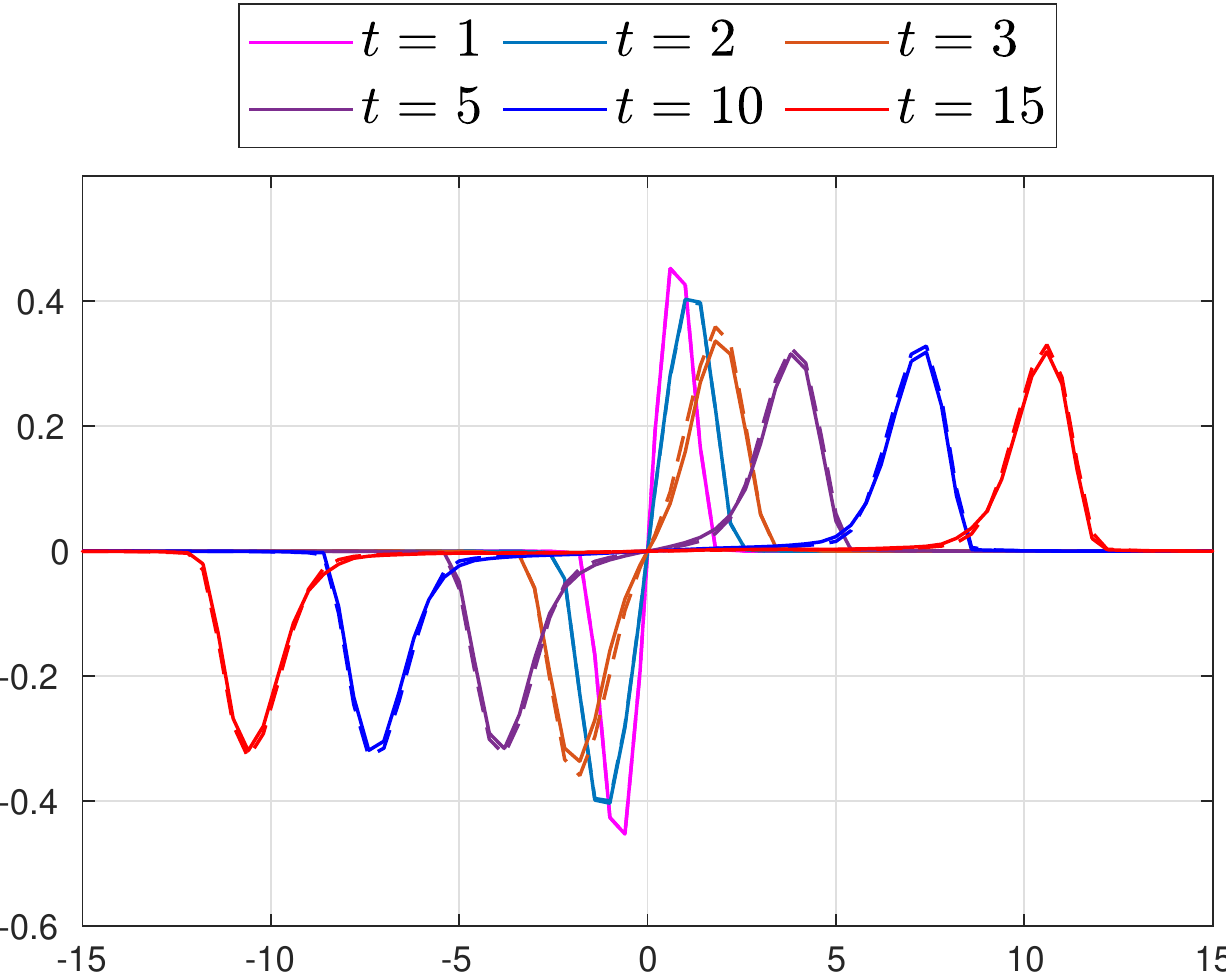}}\hfill
  \subfloat[$M_0$. Solid lines: $(\epsilon_1,\epsilon_2) = (2, 8)$. Dashed lines: $(\epsilon_1,\epsilon_2) = (1,8)$.]{\includegraphics[width=0.49\textwidth,height=0.35\textwidth, clip]{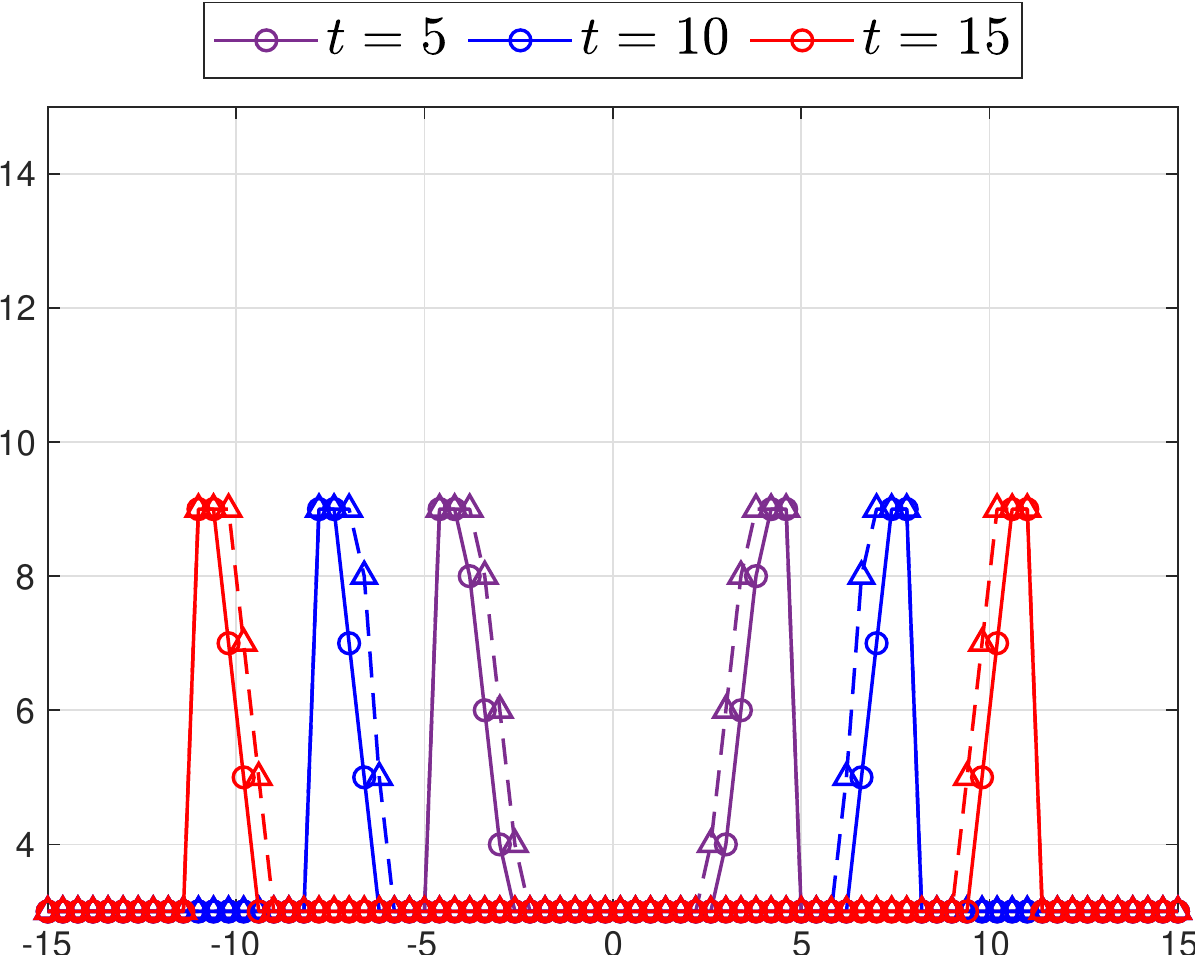}}
   \caption{Profiles of heat flux and distributions of $M_0$ for the colliding flow with two different pairs of indicator thresholds $(\epsilon_1,\epsilon_2) = (2, 8)$ and $(\epsilon_1,\epsilon_2) = (1,8)$.}
  \label{fig:ex1_ep_2}
\end{figure}

The selection of $\epsilon_2$ is done in a similar way. We reduce $\epsilon_2$ from $8$ to $4$ and compare the results with the parameters $(\epsilon_1, \epsilon_2) = (1,8)$ and $(\epsilon_1, \epsilon_2) = (1,4)$. As shown in Figure \ref{fig:ex1_ep_3}, in a few grid cells inside the shock wave, the values of $M_0$ have been significantly increased, while the solution of the heat flux does not change too much. We therefore fix the value of $\epsilon_2$ to be $4$. The computational times for these test runs are given in Table \ref{tab:ex1_ep}, which look affordable due to the coarse grid size.

\begin{figure}[!htb]
  \centering
  \subfloat[$q_1$. Solid lines: $(\epsilon_1,\epsilon_2) = (1, 8)$. Dashed lines: $(\epsilon_1,\epsilon_2) = (1,4)$.]{\includegraphics[width=0.49\textwidth,height=0.35\textwidth, clip]{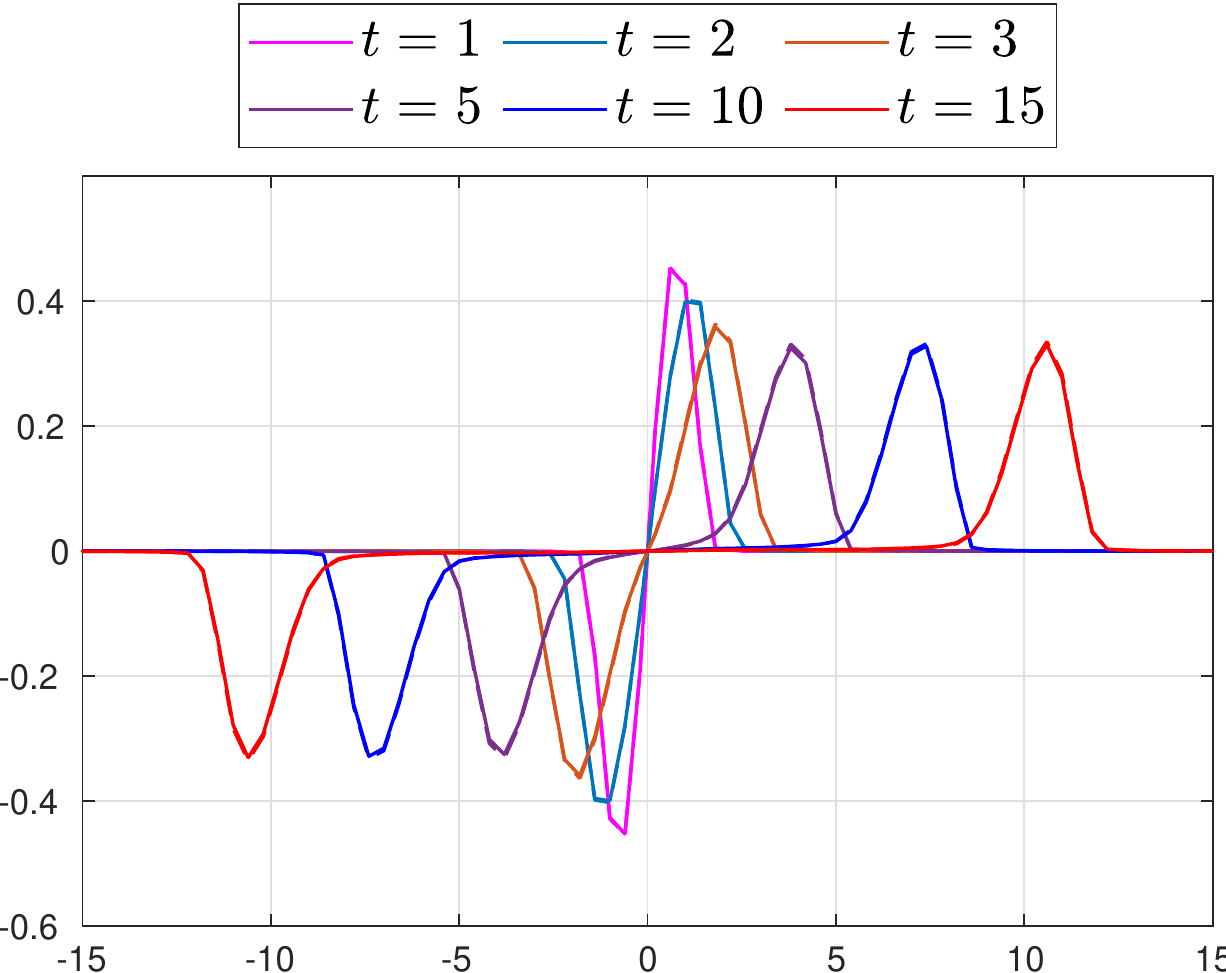}}\hfill
  \subfloat[$M_0$. Solid lines: $(\epsilon_1,\epsilon_2) = (1, 8)$. Dashed lines: $(\epsilon_1,\epsilon_2) = (1,4)$.]{\includegraphics[width=0.49\textwidth,height=0.35\textwidth, clip]{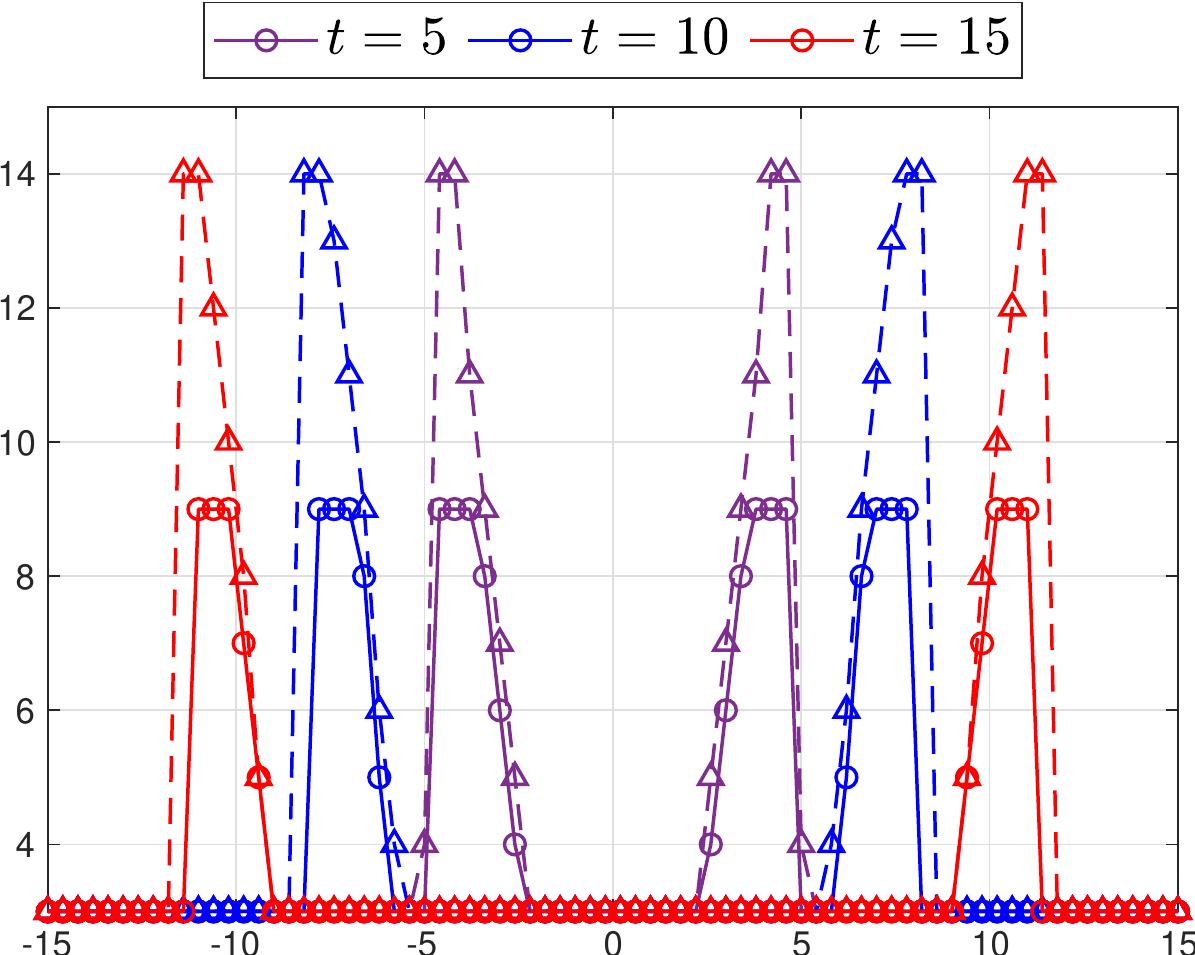}} 
   \caption{Profiles of heat flux and distributions of $M_0$ for the colliding flow with two different pairs of indicator thresholds $(\epsilon_1,\epsilon_2) = (1,8)$ and $(\epsilon_1,\epsilon_2) = (1,4)$.}
  \label{fig:ex1_ep_3}
\end{figure}

\begin{table}[!ht]
\centering
\def\arraystretch{1.5}
\caption{CPU times of the test run for the collision flow. The parameters $(0,+\infty)$ refers to the non-adaptive run with $M_0$ fixed to be $3$.}
\label{tab:ex1_ep}
\begin{tabular}{c|c@{\qquad}c@{\qquad}c@{\qquad}c@{\qquad}c@{\qquad}}
$(\epsilon_1, \epsilon_2)$ & $(0,+\infty)$ & $(4, 8)$ & $(2, 8)$ & $(1, 8)$ &$(1, 4)$\\
\hline
Total CPU time (s) & $240.62$ & $479.27$ & $510.55$ & $525.35$ & $724.82$
\end{tabular}
\end{table}

Next, we refine the grid and set the cell size to be $\Delta x = 0.1$. With $(\epsilon_1, \epsilon_2)$ chosen to be $(1,4)$, we rerun the simulation up to $t = 15$. The numerical solutions at different times are given in Figure
\ref{fig:ex1_sol}, including three equilibrium quantities (density
$\rho$, velocity $\bu$ and temperature $\theta$) and one
non-equilibrium moment (heat flux $\boldsymbol{q}$). The temperature $\theta$
is related to the distribution function by
\begin{equation} \label{eq:moments}
\theta = \frac{1}{3\rho} \int_{\mathbb{R}^3} |\bv-\bu|^2 f(\bv) \,\mathrm{d}\bv.
\end{equation}
For the velocity and heat flux, only their first components are
plotted due to the one-dimensional nature of the flow. Similar to our test runs,
the flow structure emerges from the middle of the domain due to the
interaction of the two Maxwellians, producing higher density and
temperature. Then two shock waves are formed and move in opposite
directions. After the two shock waves are separated, the center of the
domain returns to local equilibrium state. We would like to emphasize
that Figure \ref{fig:ex1_sol} includes two sets of solutions (the
reference solution and the self-adaptive solution) which almost coincide.

\begin{figure}[!htb]
  \centering
  \subfloat[$\rho$ ]{\includegraphics[width=0.49\textwidth,clip]{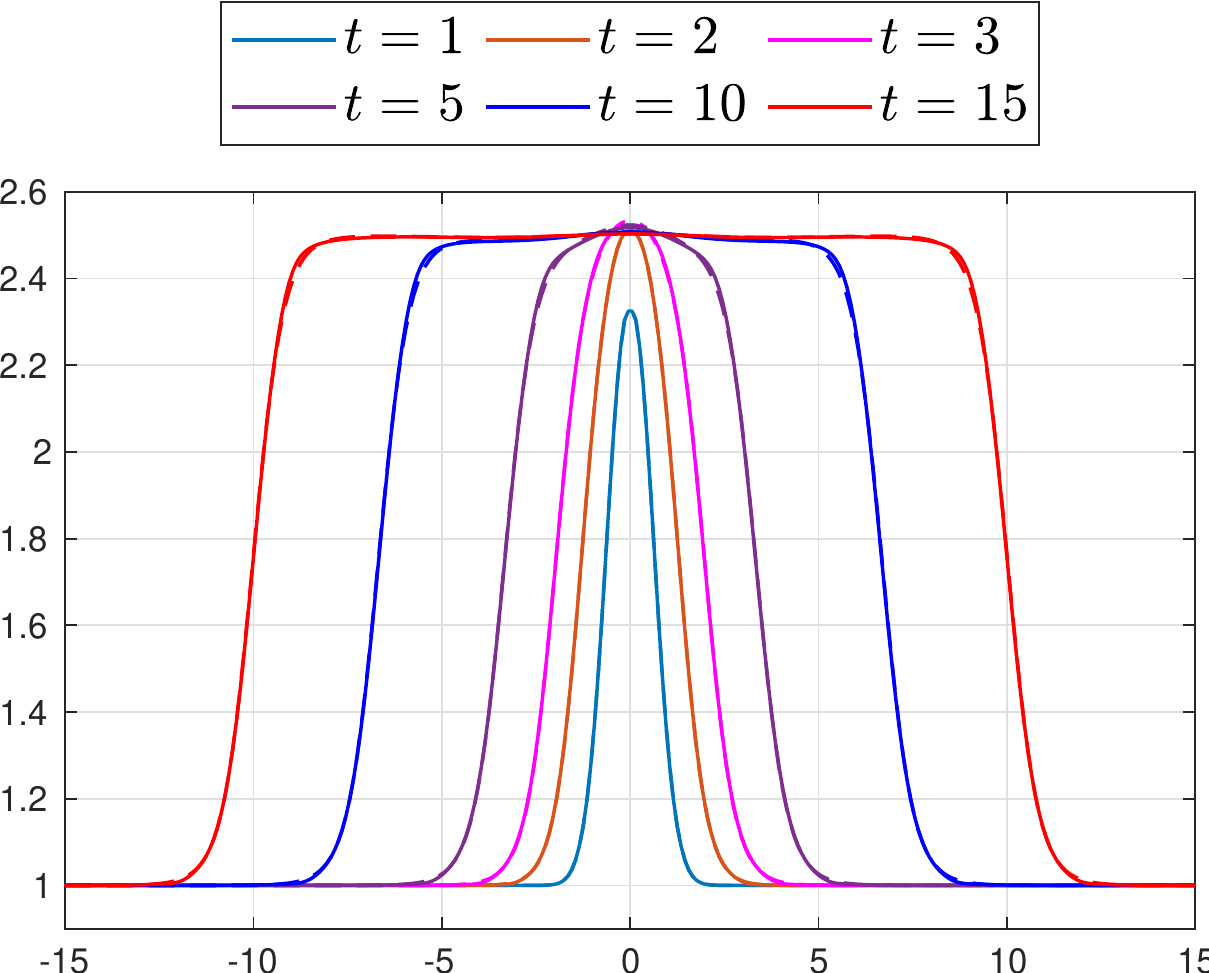}}\hfill
  \subfloat[$u_1$]{\includegraphics[width=0.49\textwidth,clip]{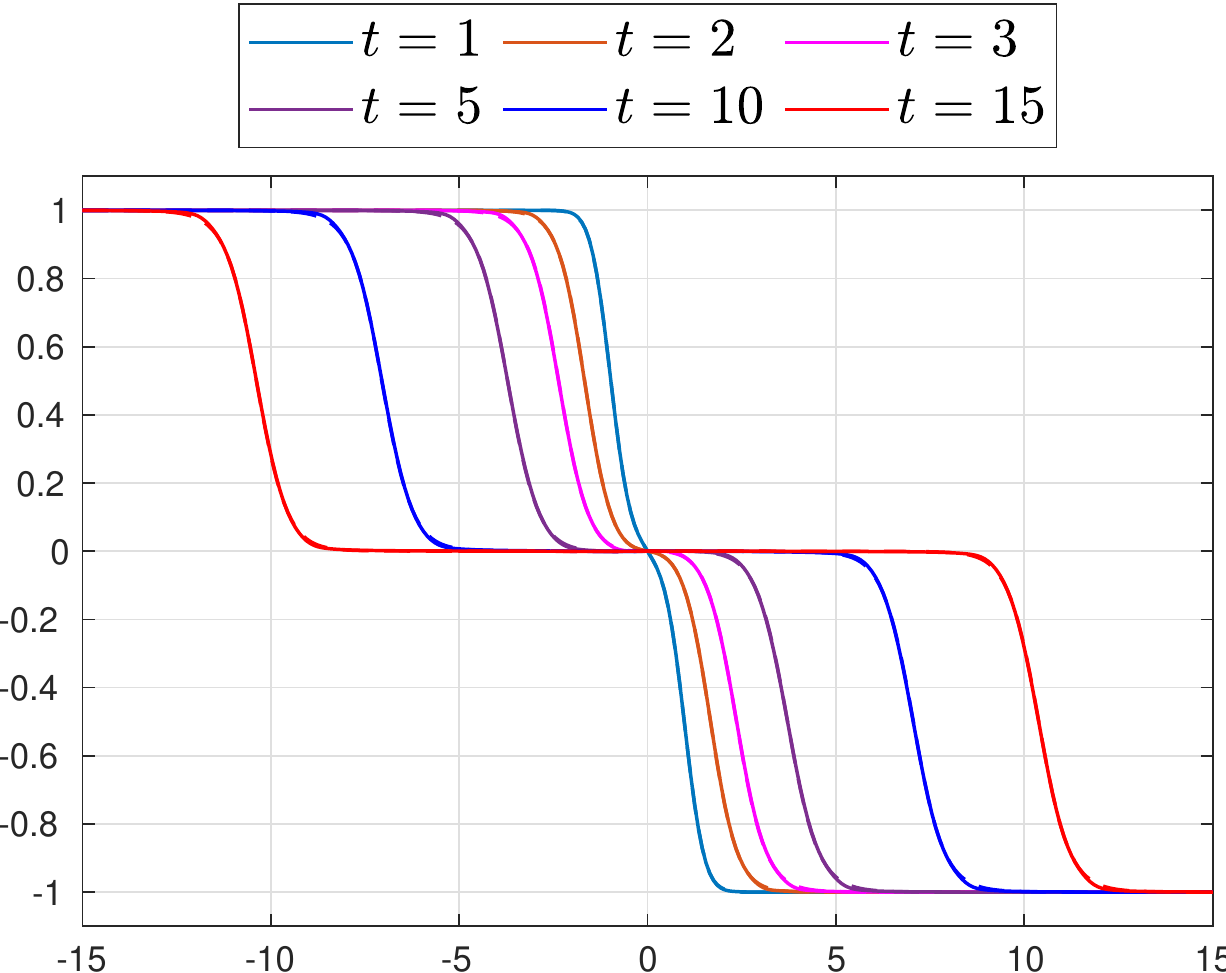}} \\
  \subfloat[$\theta$]{\includegraphics[width=0.49\textwidth,clip]{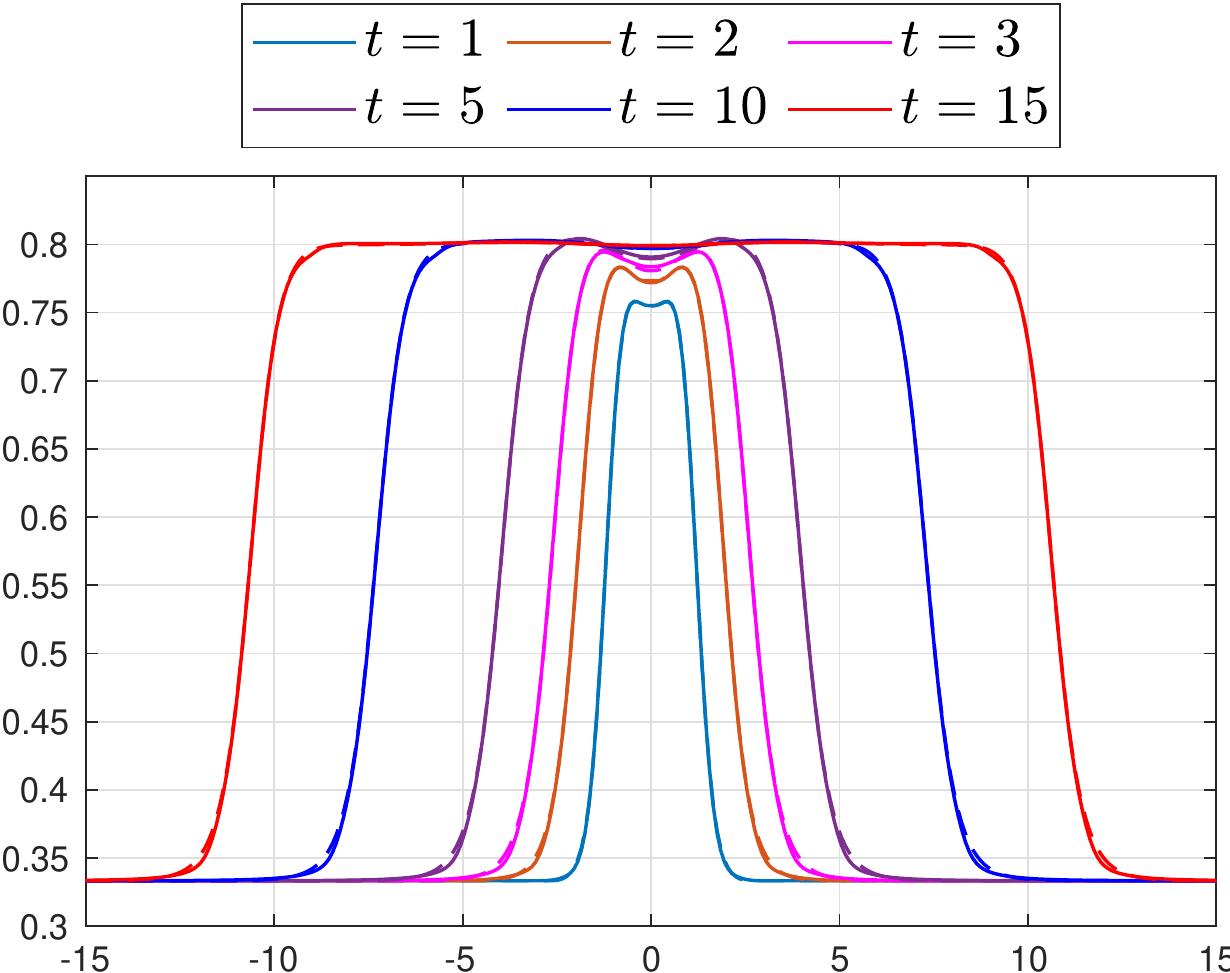}}\hfill
  \subfloat[$q_1$]{\includegraphics[width=0.49\textwidth,clip]{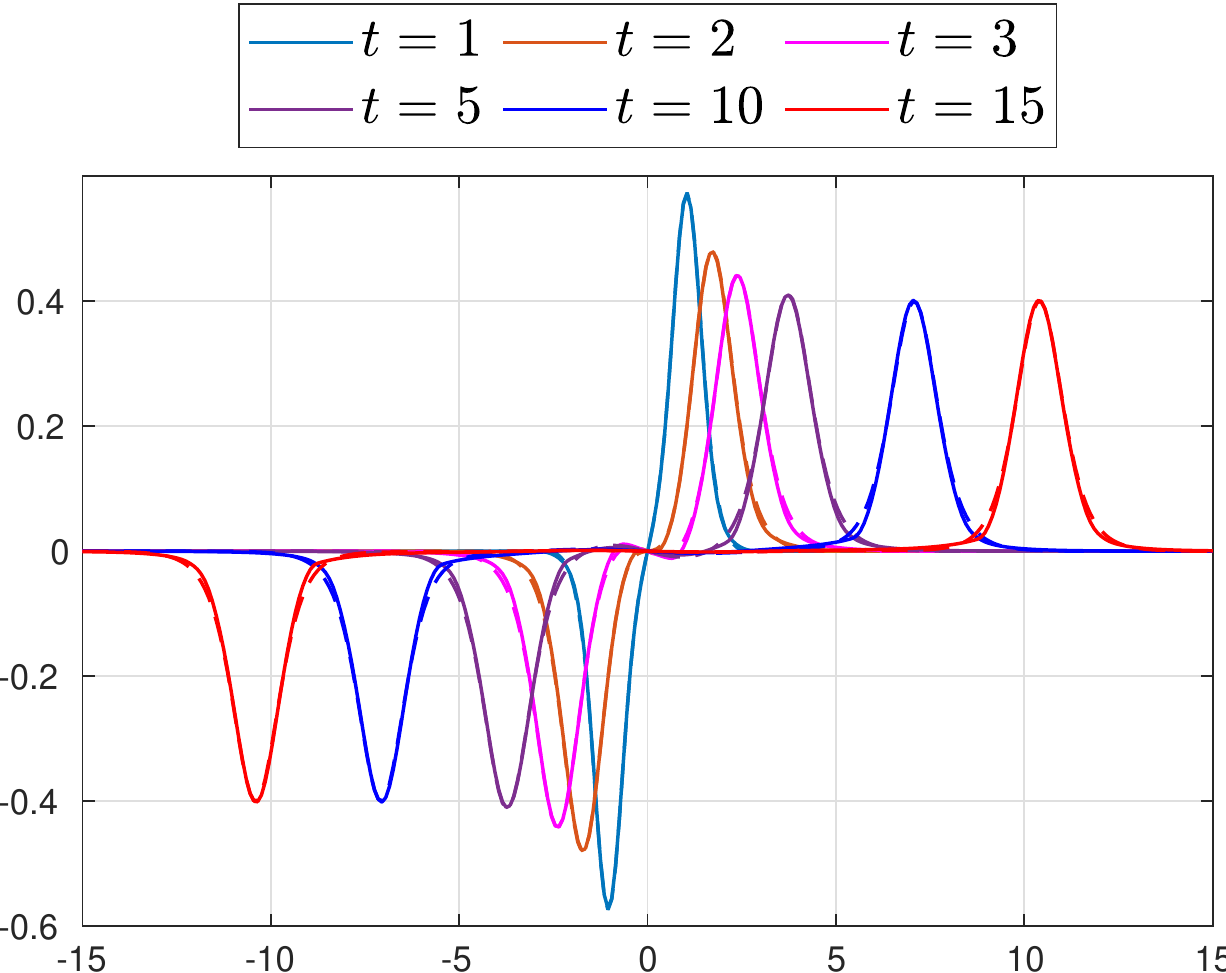}} 
  \caption{Solution of the colliding flow at different times. The solid lines are the numerical solution of the adaptive algorithm and the dashed lines are the reference solution.}
  \label{fig:ex1_sol}
\end{figure}

The evolution of the distribution of $M_0$ is provided in Figure
\ref{fig:ex1_M0}. Initially, we set $M_0 = 15$ on all grid cells. In a few
time steps, this drops to $3$ almost everywhere except the center of
the domain. Afterward, the evolution of $M_0$ well agrees with the
evolution of the non-equilibrium. During the simulation, most part of
the domain is in the local equilibrium state, where $M_0$ stays at its lowest value $3$, requiring much less computational cost. Consequently, as shown in
Table \ref{tab:ex1}, the total CPU time is significantly reduced
compared with the simulation using a uniform $M_0$. Moreover,
the evaluation of the error indicator only takes a
relatively small portion of the total computational time, which agrees
with the goal we set in Section \ref{sec:intro}. It is also worth mentioning that the total computational time is 3993.29s, which is even longer than the sum of all our test runs.

\begin{figure}[!htb]
  \centering
  \subfloat[$M_0, t = 1$]{\includegraphics[width=0.49\textwidth,clip]{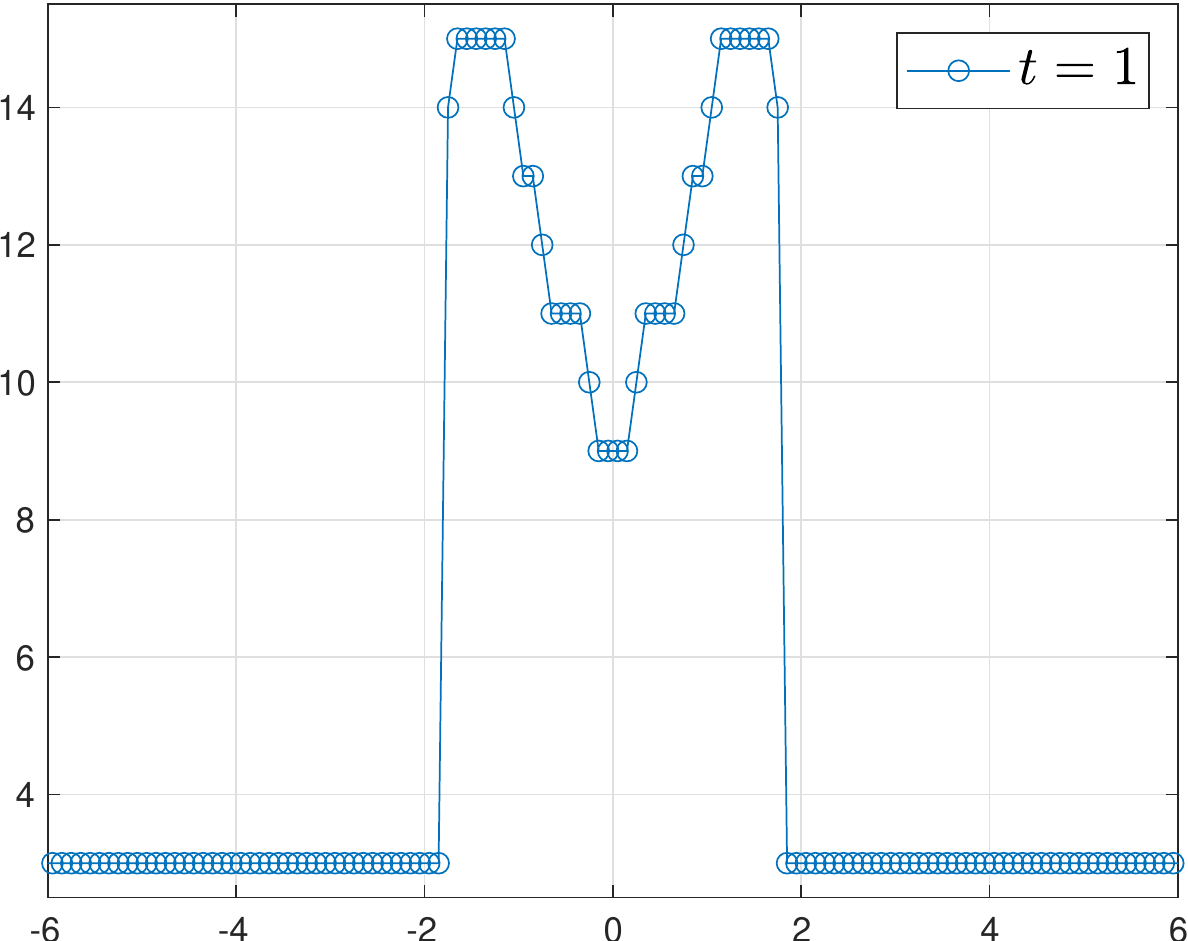}}\hfill
  \subfloat[$M_0, t = 2$]{\includegraphics[width=0.49\textwidth,clip]{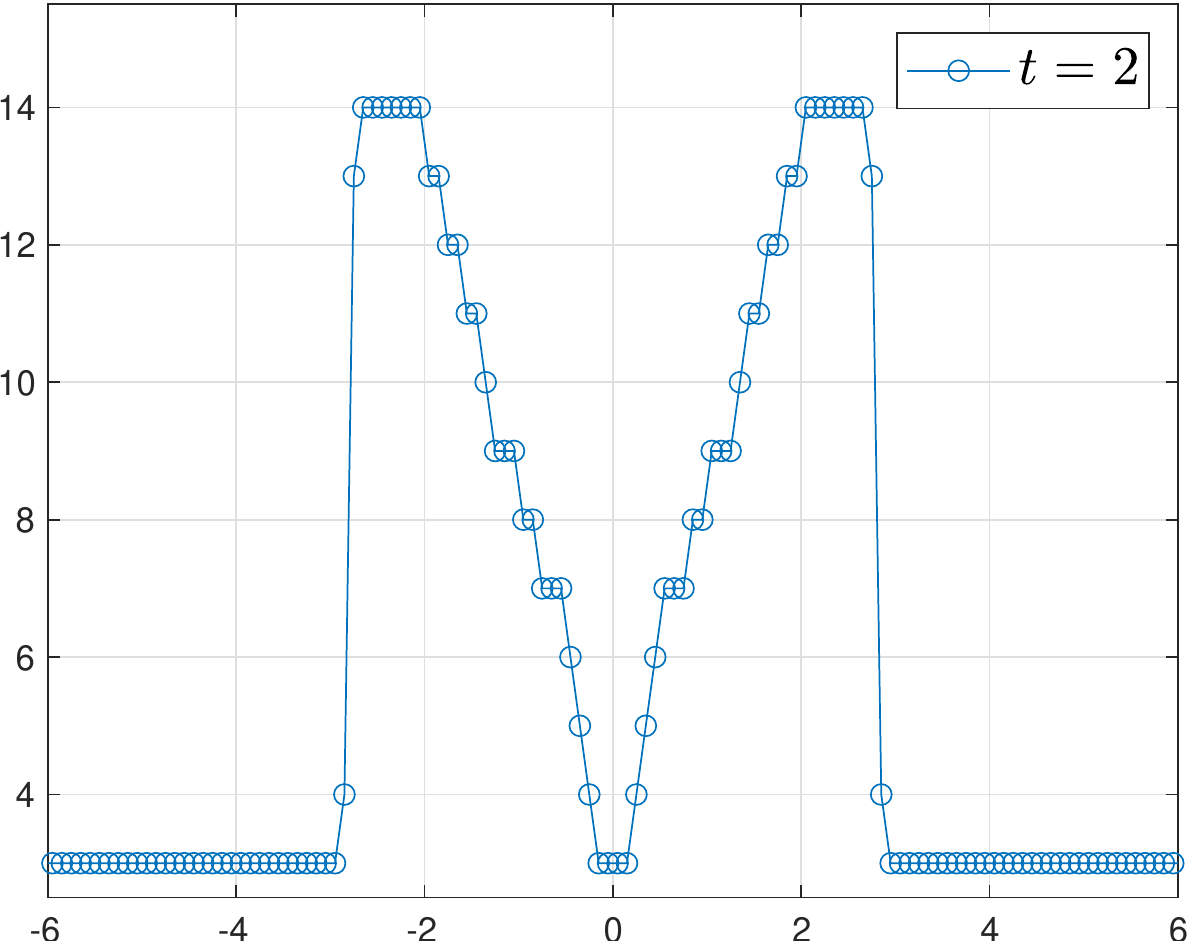}} \\
  \subfloat[$M_0, t = 3$]{\includegraphics[width=0.49\textwidth,clip]{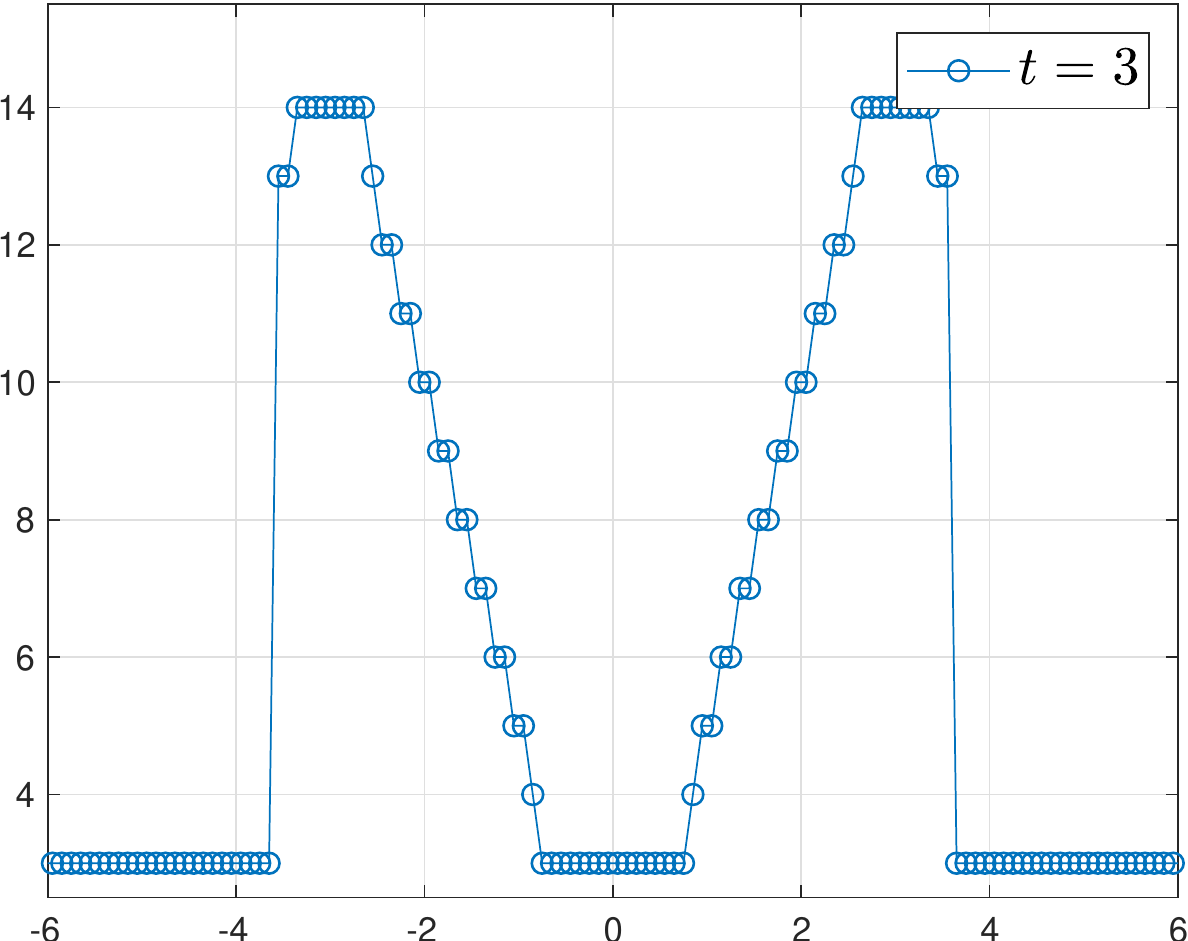}}\hfill
  \subfloat[$M_0, t = 5, 10, 15$]{\includegraphics[width=0.49\textwidth,clip]{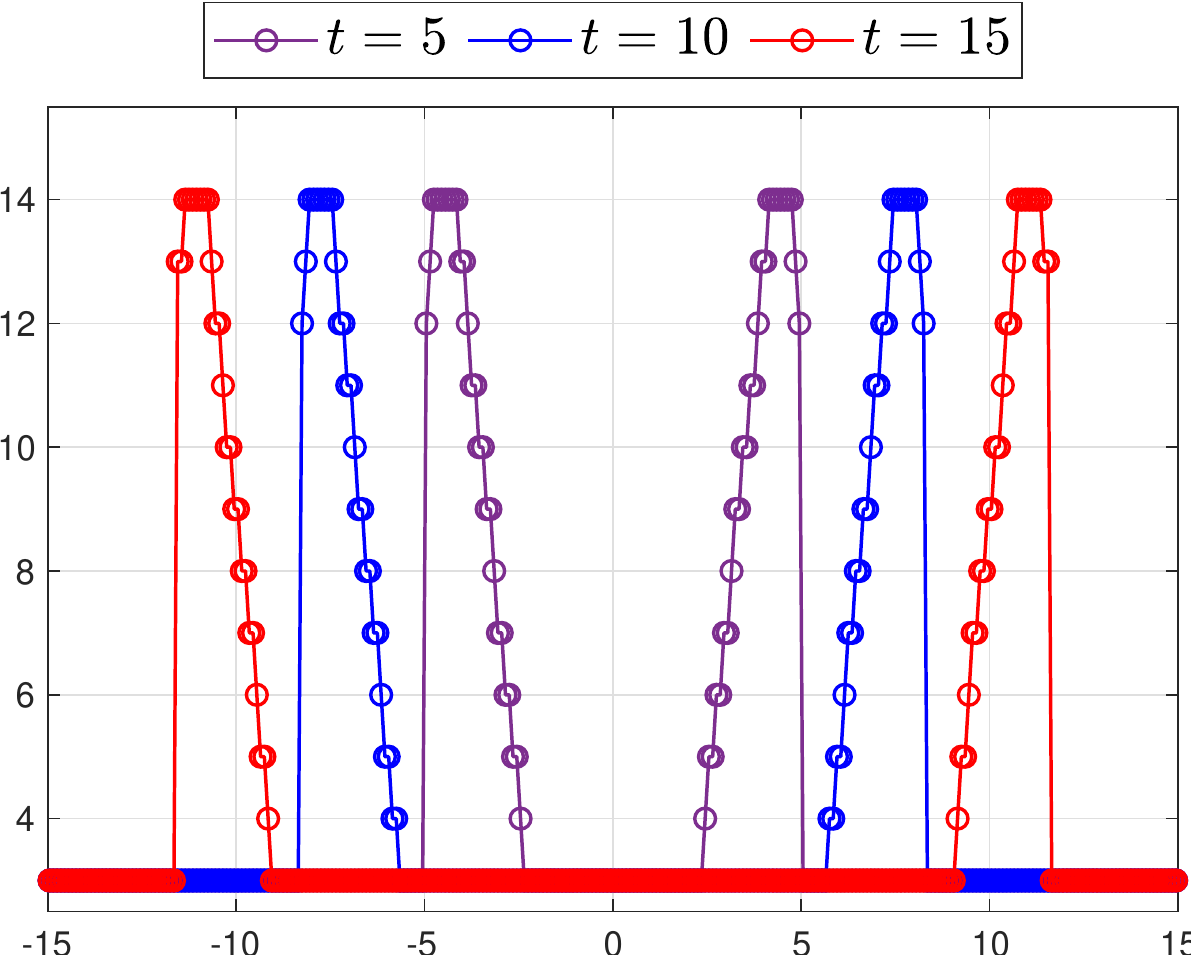}} 
   \caption{Distribution of $M_0$ for the colliding flow at different times.}
  \label{fig:ex1_M0}
\end{figure}

\begin{table}[!ht]
\centering
\caption{Statistical data for the colliding flow. $T_{\mathrm{ref}}$ and $T_{\mathrm{adp}}$ refer to the average CPU time per time step for the reference solution and the self-adaptive solution, respectively, and $T_{\mathrm{ind}}$ refers to the average CPU time per time step for the computation of the error indicator.}
\label{tab:ex1}
\begin{tabular}{c@{\qquad}c@{\qquad}c@{\qquad}c@{\qquad}c}
\hline
$T_{\mathrm{ref}}$ &  $T_{\mathrm{adp}}$ & $T_{\mathrm{ind}}$ & $1 - T_{\mathrm{adp}} / T_{\mathrm{ref}}$ & $T_{\mathrm{ind}} / T_{\mathrm{adp}}$ \\
\hline
$16.96$s & $1.06$s & $0.108$s & $93.7$\% & $10.1$\% \\
\hline
\end{tabular}
\end{table}

\subsubsection{Planar Couette flow} 
The planar Couette flow is a commonly used benchmark problem for the one-dimensional Boltzmann equation.  We assume that the gas between two infinite parallel plates has an initial temperature $\theta = 1$, and the two plates move in the opposite directions with velocities parallel to the plates. The speeds of both plates are $0.5$, and the distance between the two plates is $L = 1$. Both plates are assumed to be completely diffusive, meaning that for any particle hitting the wall, the reflected velocity is completely independent of the incident velocity. Instead, the distribution of the reflected velocity follows the Maxwellian with the wall velocity being the center and wall temperature being the variance. The implementation of the boundary condition in the Burnett spectral method has been detailed in \cite[Section 3.4]{Hu2020b}. The initial state of the fluid is set to be a uniform Maxwellian with density $\rho=1$, velocity $\bu = 0$ and temperature $\theta = 1$. Driven by the motion of the plates, the flow will reach a steady state as time approaches infinity. Here we choose the Knudsen number to be $0.5$, for which a strong non-equilibrium can be expected, especially on the boundary of the domain where the distribution function is discontinuous.

Numerically, we set $\bar{\bu} = \boldsymbol{0}$, $\bar{\theta} = 1$ and $M=30$ in \eqref{eq:truncated_series}. A uniform grid with $200$ cells is used for spatial discretization, and the thresholds of the error indicator are set to be $\epsilon_1 = 1$ and $\epsilon_2 = 8$. Figure \ref{fig:ex2_sol} shows the numerical solution of the four moments defined in \eqref{eq:moments1}\eqref{eq:moments_u} and \eqref{eq:moments}, and Figure \ref{fig:ex2_M0} shows the evolution of the parameter $M_0$. During the evolution to the steady state, some small differences between the self-adaptive solution and the reference solution can be observed. The discrepancy of the velocity profiles appears to be the most significant due to its small magnitude. In this example, large $M_0$ only appears near the boundary of the domain for small $t$, since the central part of the domain is still mostly in the initial equilibrium state. As the boundary effect propagates inward, the value of $M_0$ gradually increases. Interestingly, the distribution of $M_0$ reaches the ``steady state'' earlier than the fluid does. As shown in Figures \ref{fig:ex2_t1} and \ref{fig:ex2_steady}, at $t = 1.0$, while the fluid structure is still evolving, $M_0$ does not change with time any more. Compared to the example in Section \ref{sec:colliding_flow}, the non-equilibrium spreads more widely in this case, resulting in less reduction of the computational cost (see Table \ref{tab:ex2}). Nevertheless, the CPU time per time step is still reduced to nearly one-sixth. Here, the computation of the indicator takes a smaller portion since longer time is spent on the evaluation of the collision term.

\begin{figure}[!htb]
  \centering
  \subfloat[$\rho$ ]{\includegraphics[width=0.49\textwidth,clip]{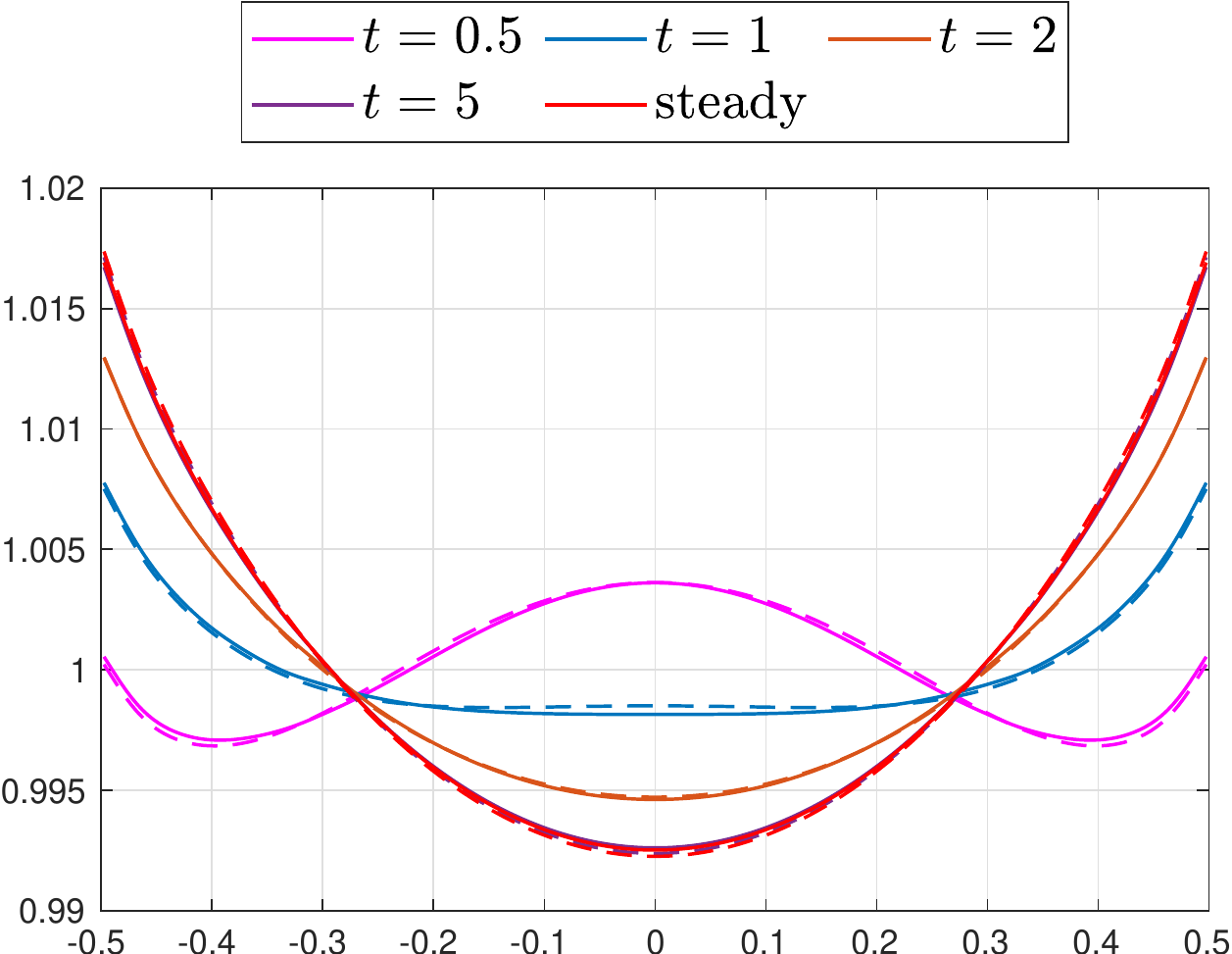}}\hfill
  \subfloat[$u_1$]{\includegraphics[width=0.49\textwidth,clip]{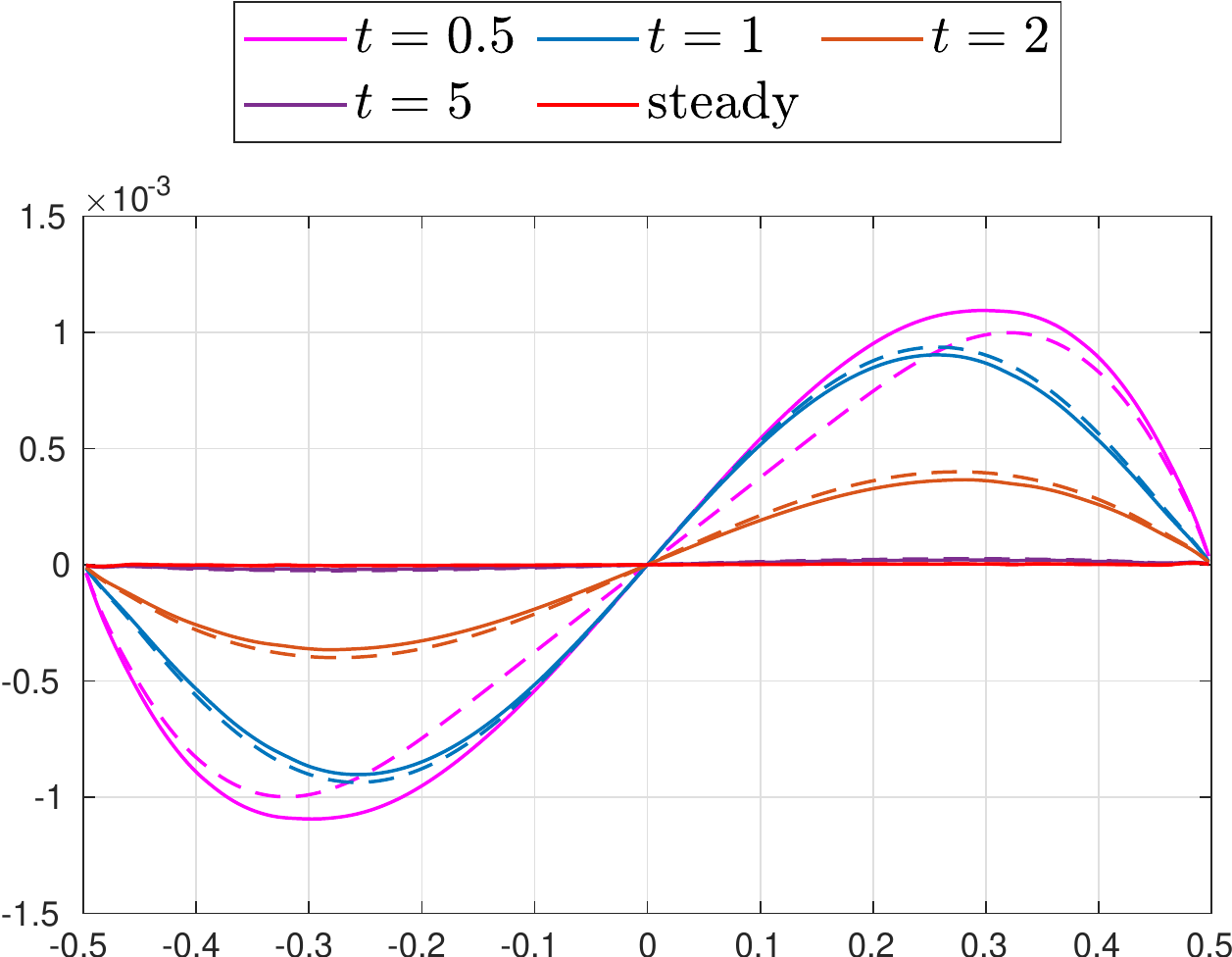}} \\
  \subfloat[$\theta$]{\includegraphics[width=0.49\textwidth,clip]{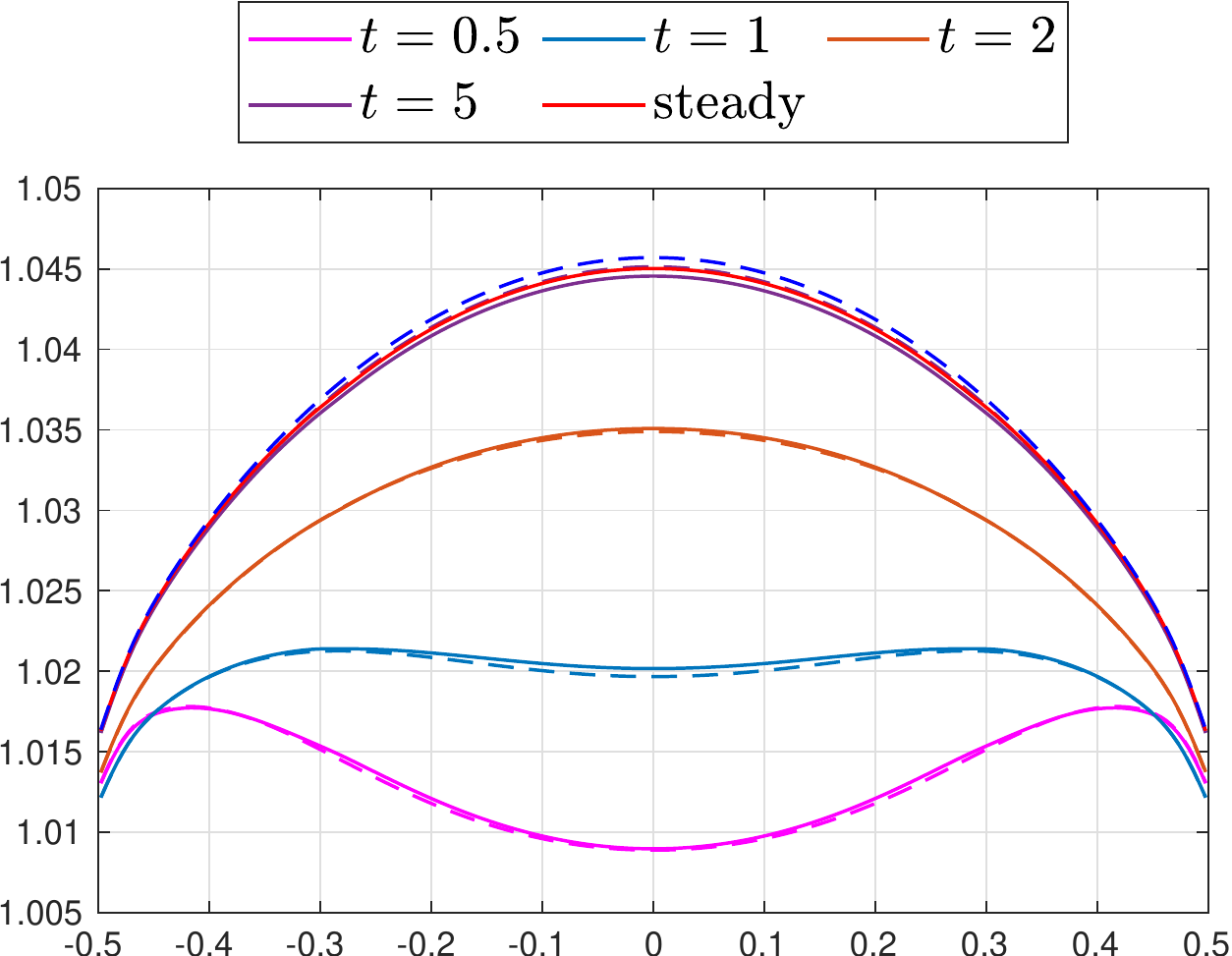}}\hfill
  \subfloat[$q_1$]{\includegraphics[width=0.49\textwidth,clip]{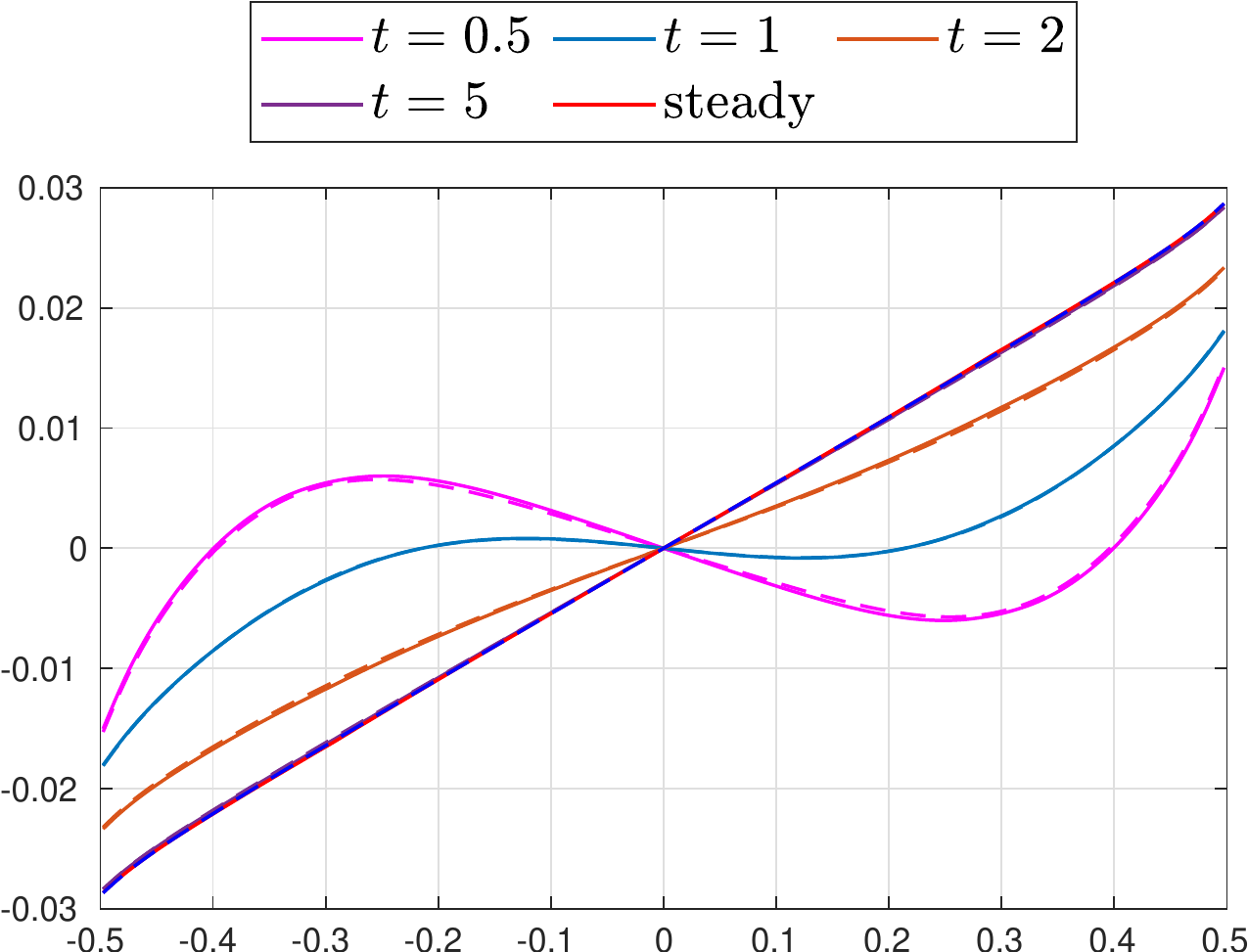}} 
 \caption{Solution of the Couette flow at different times. The solid lines are the numerical solution of the adaptive algorithm and the dashed lines are the reference solution. }
  \label{fig:ex2_sol}
\end{figure}

\begin{figure}[!htb]
  \centering
  \subfloat[$t = 0.1$ ]{\includegraphics[width=0.49\textwidth]{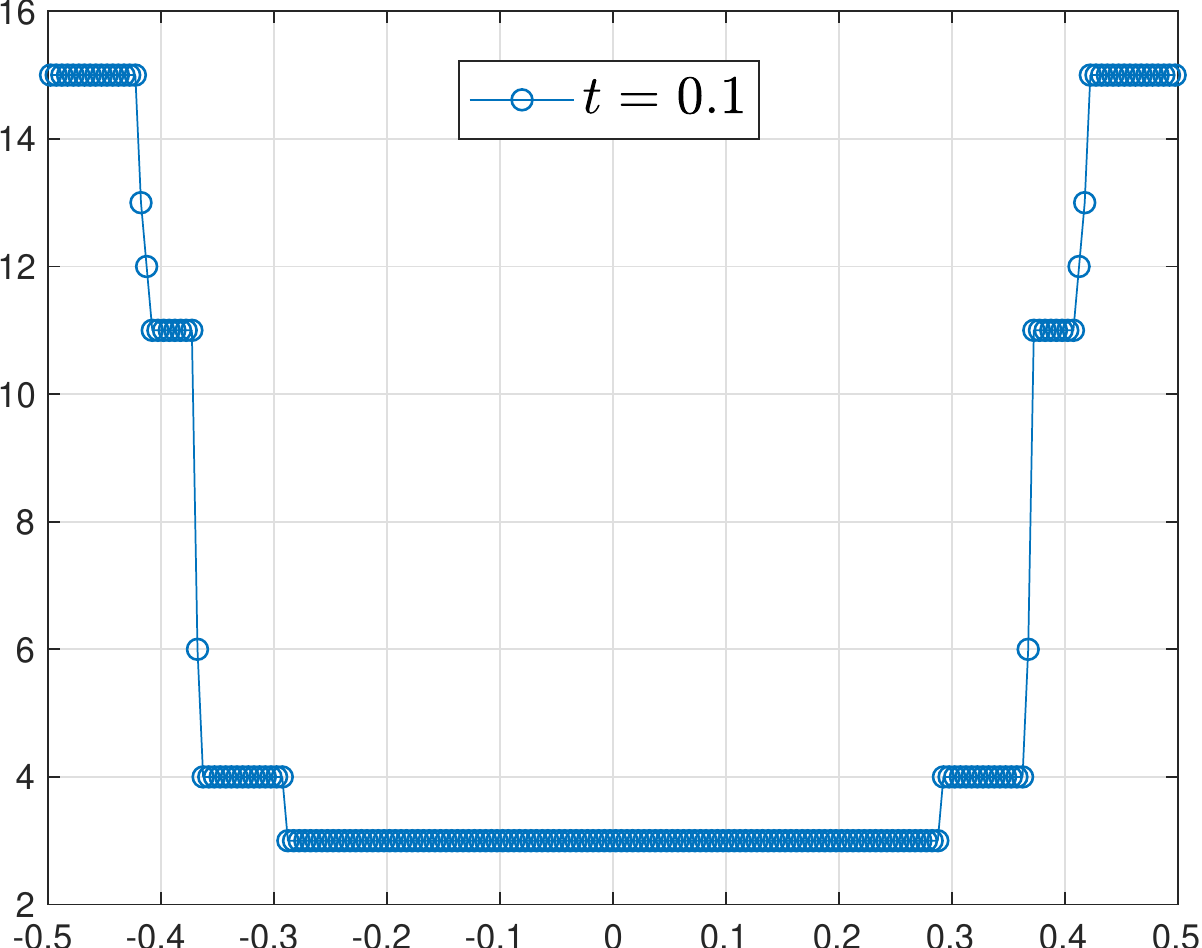}}\hfill
  \subfloat[$t = 0.5$]{\includegraphics[width=0.49\textwidth]{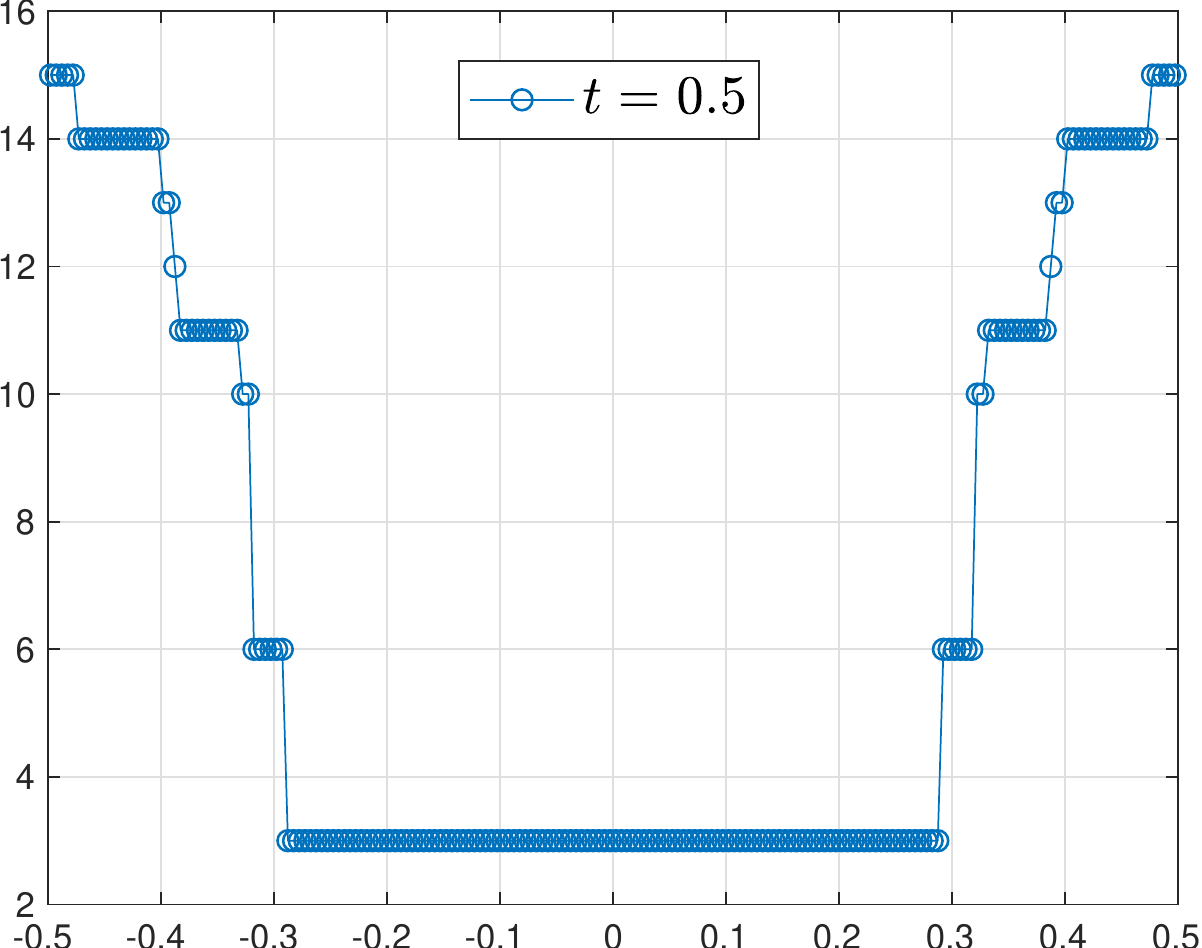}} \\
  \subfloat[$t = 1.0$]{\label{fig:ex2_t1}\includegraphics[width=0.49\textwidth]{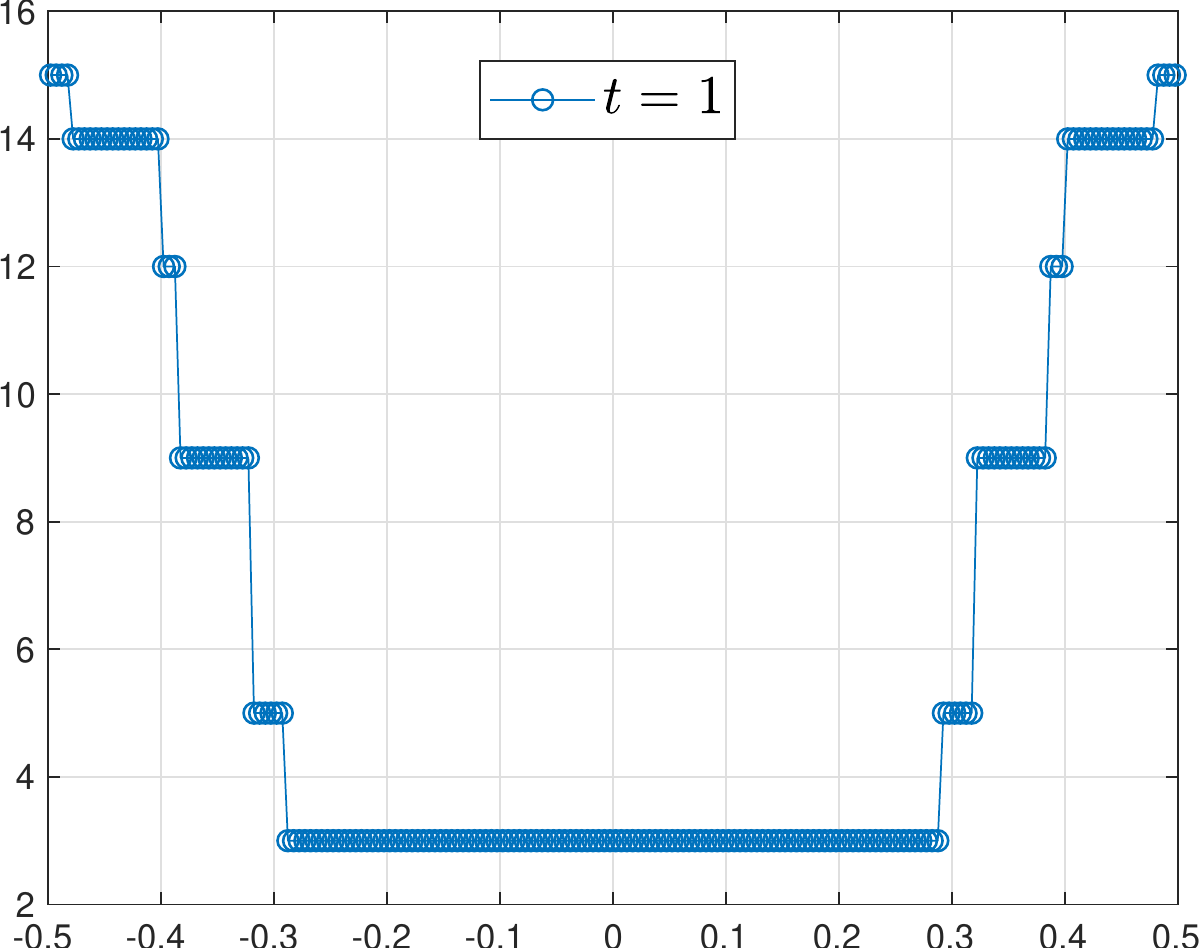}}\hfill
  \subfloat[steady state]{\label{fig:ex2_steady}\includegraphics[width=0.49\textwidth]{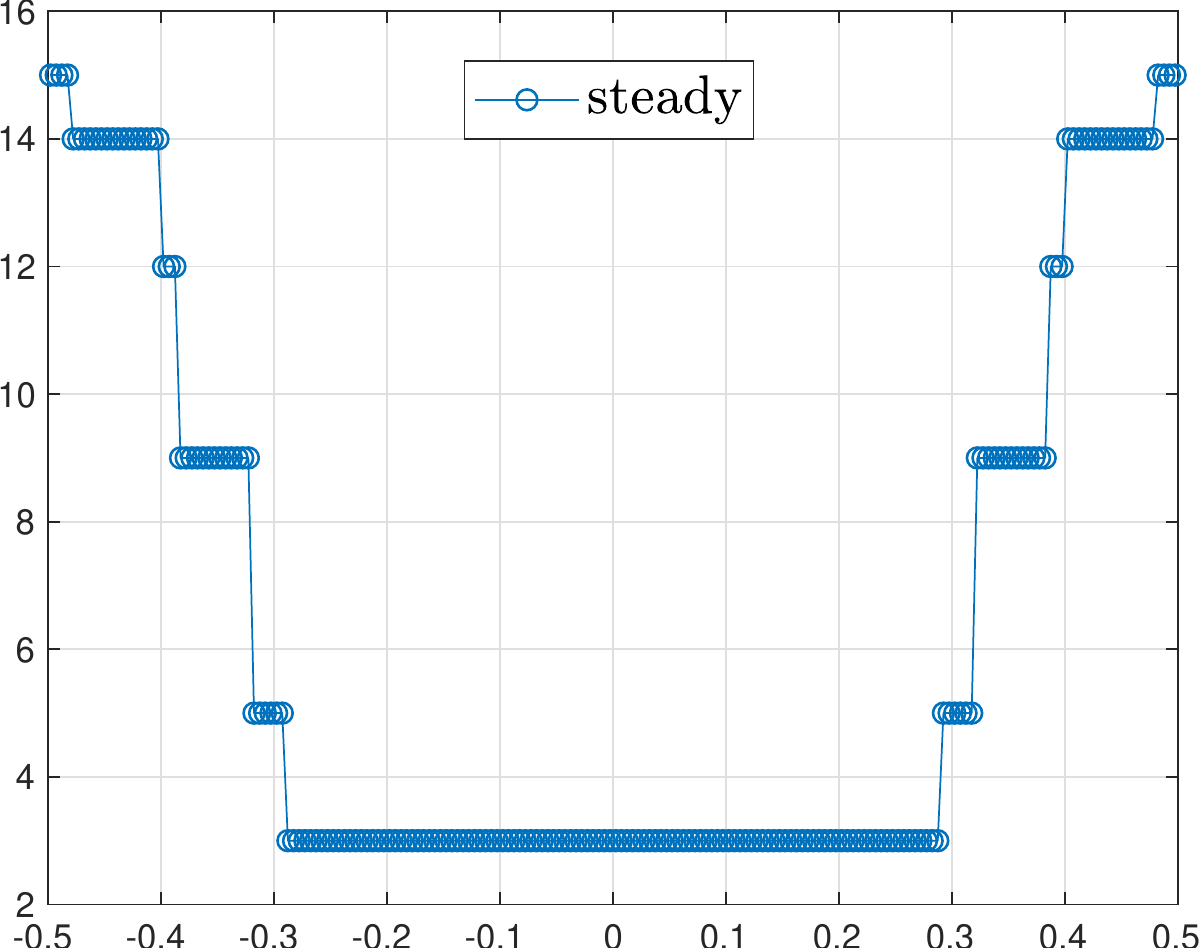}} 
 \caption{Distribution of $M_0$ for the Couette flow at different times.}
  \label{fig:ex2_M0}
\end{figure}

\begin{table}[!ht]
\centering
\caption{Statistical data for the planar Couette flow. $T_{\mathrm{ref}}$ and $T_{\mathrm{adp}}$ refer to the CPU time per time step for the reference solution and the self-adaptive solution, respectively, and $T_{\mathrm{ind}}$ refers to the CPU time per time step for the computation of the error indicator.}
\label{tab:ex2}
\begin{tabular}{c@{\qquad}c@{\qquad}c@{\qquad}c@{\qquad}c}
\hline
$T_{\mathrm{ref}}$ &  $T_{\mathrm{adp}}$ & $T_{\mathrm{ind}}$ & $1 - T_{\mathrm{adp}} / T_{\mathrm{ref}}$ & $T_{\mathrm{ind}} / T_{\mathrm{adp}}$ \\
\hline
$8.54$s & $1.51$s & $0.057$s & $82.3$\% & $3.78$\% \\
\hline
\end{tabular}
\end{table}

\subsection{Two-dimensional examples}
In our two-dimensional examples, we also consider the variable hard sphere model with $\nu = 5/9$, and $M_0$ is still capped at $15$. Two examples with and without boundary conditions will be considered in the following two subsections.

\subsubsection{Fluid diffusion} \label{sec:diffusion}
Our first two-dimensional example considers the initial data
\begin{displaymath}
f(x_1, x_2, \bv, 0) = \frac{\rho(x_1, x_2)}{(2\pi \theta(x_1, x_2))^{3/2}} \exp \left( -\frac{|\bv -  \bu(x_1, x_2)|^2}{2\theta(x_1, x_2)} \right),
\end{displaymath}
where $\bu(x_1, x_2) = \boldsymbol{0}$ and $\theta(x_1, x_2) = 1$ for all $x_1$ and $x_2$, while $\rho(x)$ is set to be
\begin{displaymath}
\rho(x_1, x_2) = \left\{\begin{array}{@{}ll}
10, & \text{if } |x_1| \leqslant 0.05 \text{ and } |x_2| \leqslant 0.05, \\
1, & \text{otherwise.}
\end{array} \right.
\end{displaymath}
We set the computational domain to be
$\Omega = [-0.5, 0.5] \times [-0.5, 0.5]$ and apply the Neumann
boundary condition to simulate the flow in the unbounded domain. In
this example, there is a high density region in the center of the
domain, and we are interested in the dynamics of its diffusion into
the background fluid. Here we choose the Knudsen number to be
$\Kn = 0.05$. The parameters in \eqref{eq:truncated_series} are set to
be $M = 30$, $\bar{\bu} = \boldsymbol{0}$ and $\bar{\theta} = 1$. A
uniform grid of size $200 \times 200$ is utilized here to discretize
the physical space. The thresholds of the error indicator are chosen as
$(\epsilon_1, \epsilon_2) = (2.5, 8.5)$.

We plot the evolution of the fluid states in Figure \ref{fig:ex4_sol},
with reference solutions computed by using $M_0 = 15$
everywhere. Besides the equilibrium variables density $\rho$ and
temperature $\theta$, we have also plotted the shear stress
$\sigma_{12}$, which is related to the distribution function by
\begin{equation}
  \sigma_{12} = \int_{\mathbb{R}^3} (v_1 - u_1)(v_2 - u_2) f(\bv) \,\mathrm{d}\bv.
\end{equation}
It can be seen that the density in the center of the domain gradually
decreases, and as the mass flows out, the temperature also starts to
decrease so that the total energy can be conserved. Due to the
symmetry of the initial data, the value of the non-equilibrium
variable $\sigma_{12}$ equals zero on both $x$- and $y$-axes. As the
fluid evolves with time, the non-equilibrium effect spreads out, while
the peak values of $\sigma_{12}$ start to decrease. With our adaptive
method, these phenomena can be accurately captured.
To get a clearer view of the difference between the adaptive solutions and the reference solutions, we define 
\begin{equation}
    \label{eq:ex3_error}
    \mathcal{E}_{\rho} = \rho^{\rm adp} - \rho^{\rm ref}, \qquad
    \mathcal{E}_{\theta} = \theta^{\rm adp} - \theta^{\rm ref}, \qquad
    \mathcal{E}_{\sigma_{12}} = \sigma_{12}^{\rm adp} - \sigma_{12}^{\rm ref},
\end{equation}
where the superscripts ``adp'' and ``ref'' denote the adaptive solution and the reference solution, respectively. These quantities are plotted in Figure \ref{fig:ex4_l2_error}, and the corresponding relative $L^2$ differences are given in Table \ref{tab:ex4_l2_error}. One can observe that the difference between the two solutions increases with time due to the accumulation of the error. This also implies that the thresholds $\epsilon_1$ and $\epsilon_2$, whose values stay the same throughout the simulation, do not directly correspond to the error of the solution. Our error indicator only estimates the local truncation error, which may accumulate in time-dependent problems. In such circumstances, to ensure the numerical accuracy for longer simulations, one may need to choose smaller values of $\epsilon_1$ and $\epsilon_2$. Such an effect is automatically incorporated into the procedure of parameter selection introduced in Section \ref{sec:adaptive} if the test runs are also preformed until the desired final time.

\begin{table}[!ht]
\centering
\def\arraystretch{1.3}
\caption{Relative $L_2$ difference between the self-adaptive solution and the reference solution.}
\label{tab:ex4_l2_error}
\footnotesize
\begin{tabular}{c|@{\qquad}c@{\qquad}c@{\qquad}c@{\qquad}c}
 &  $t = 0.04$ & $t = 0.08$ & $t = 0.12$ & $t = 0.15$ \\
\hline
$\|\mathcal{E}_{\rho}\|_{L^2} / \|\rho^{\mathrm{ref}}\|_{L^2}$ & $0.01$\% & $0.03$\% & $0.11$\% & $0.24$\% \\
$\|\mathcal{E}_{\theta}\|_{L^2} / \|\theta^{\mathrm{ref}}\|_{L^2}$ & $0.01$\% & $0.04$\% & $0.12$\% & $0.20$\% \\
$\|\mathcal{E}_{\sigma_{12}}\|_{L^2} / \|\sigma_{12}^{\mathrm{ref}}\|_{L^2}$ & $0.30$\% & $0.84$\% & $1.29$\% & $2.01$\% 
\end{tabular}
\end{table}

\begin{figure}[!htb]
  \centering
  \subfloat[$\rho, t = 0.04$]{\includegraphics[bb=18 21 584 500, width=0.33\textwidth,clip]{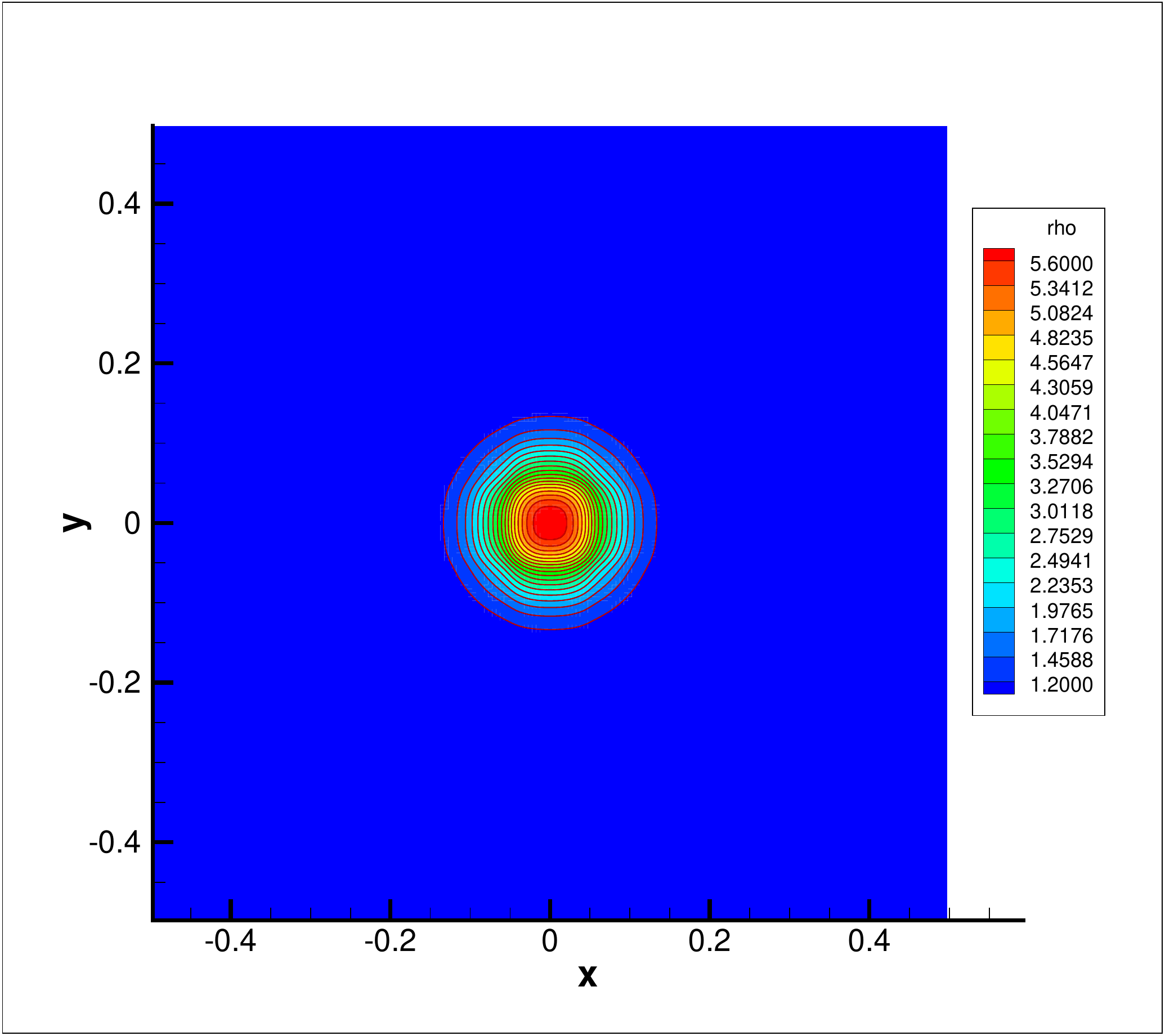}}\hfill
  \subfloat[$\theta, t = 0.04$]{\includegraphics[bb=18 21 584 500, width=0.33\textwidth,clip]{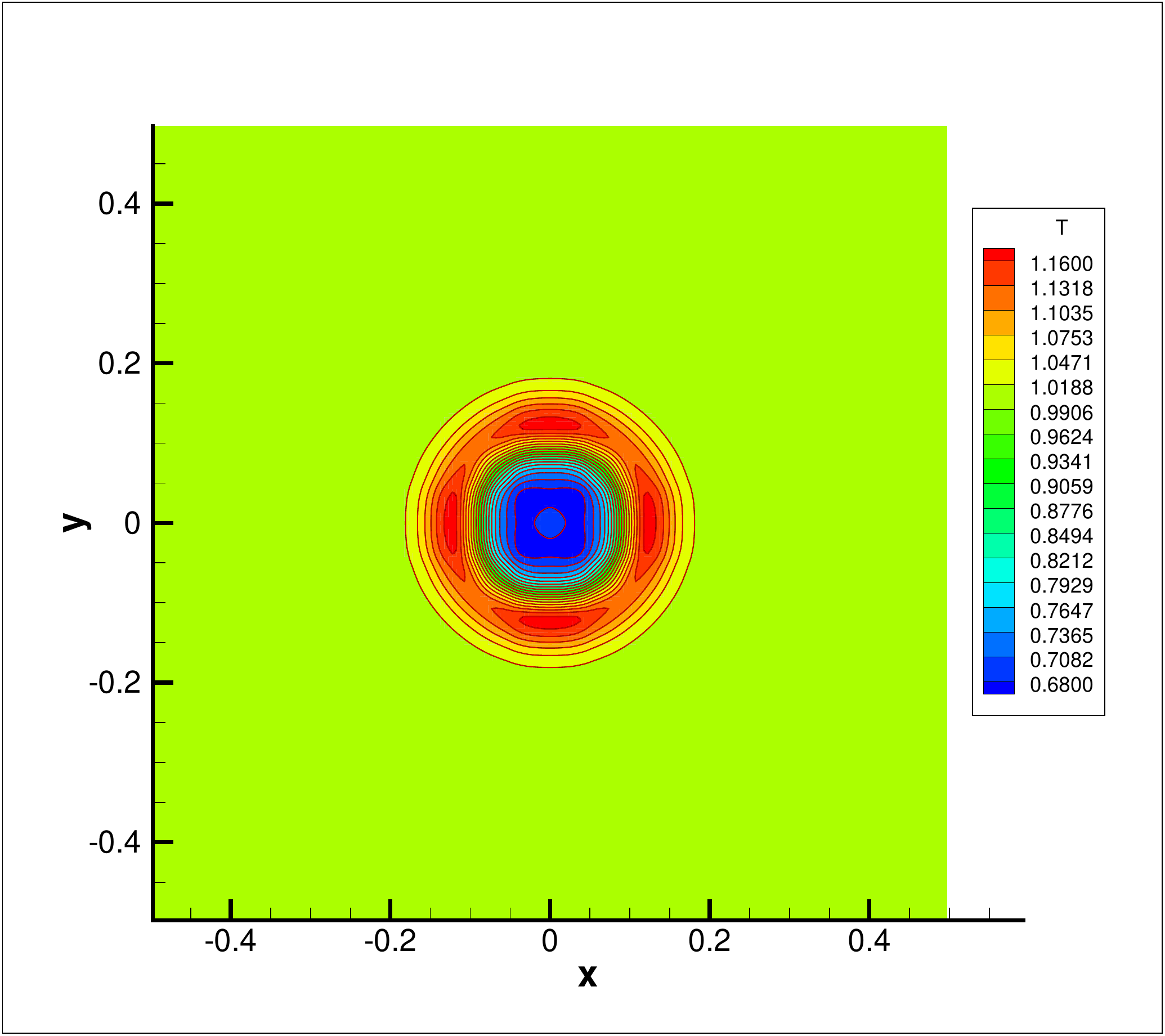}}\hfill
  \subfloat[$\sigma_{12}, t = 0.04$ ]{\includegraphics[bb=18 21 584 500, width=0.33\textwidth,clip]{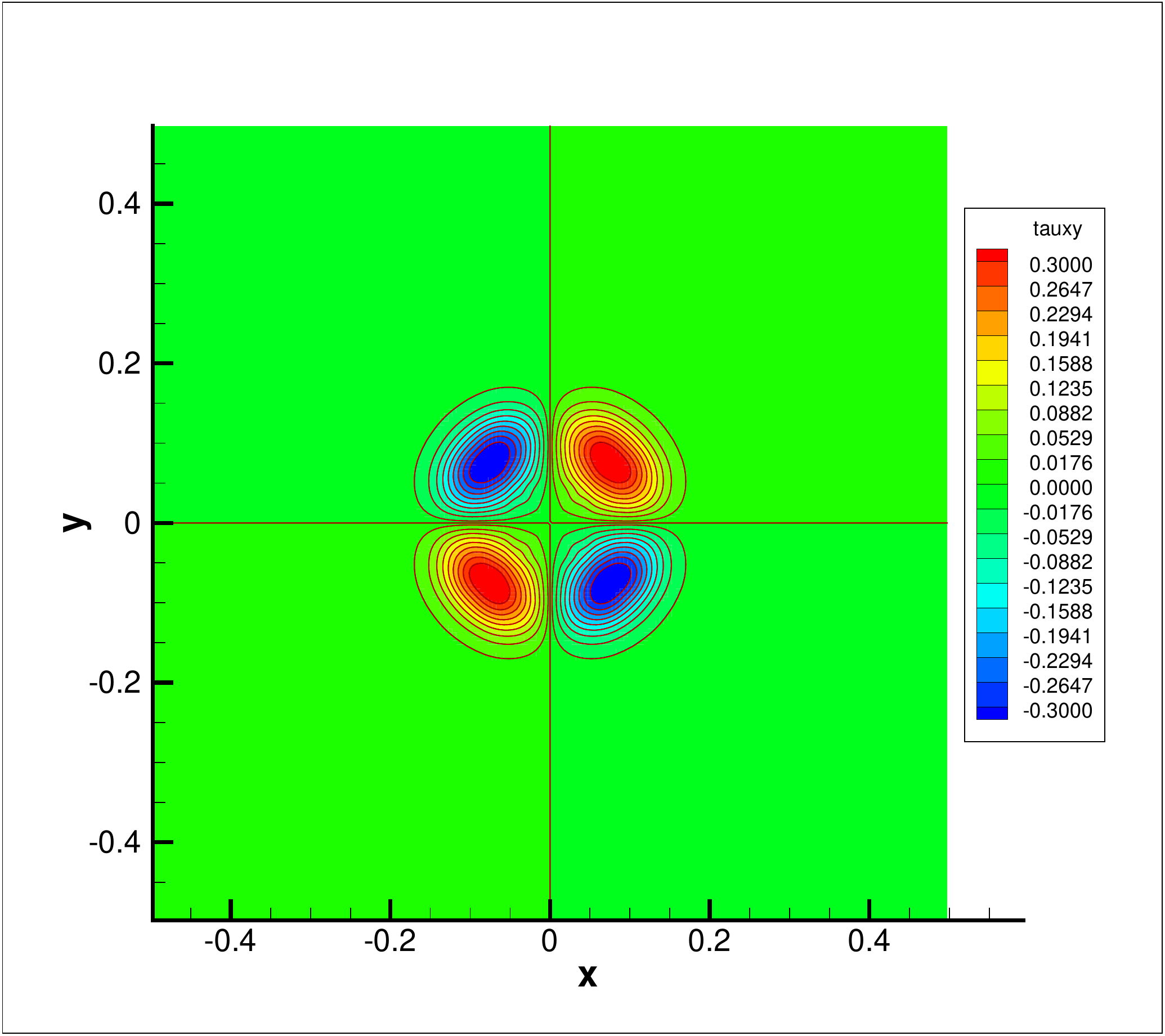}}\hfill \\
  \subfloat[$\rho, t = 0.08$]{\includegraphics[bb=18 21 584 500, width=0.33\textwidth,clip]{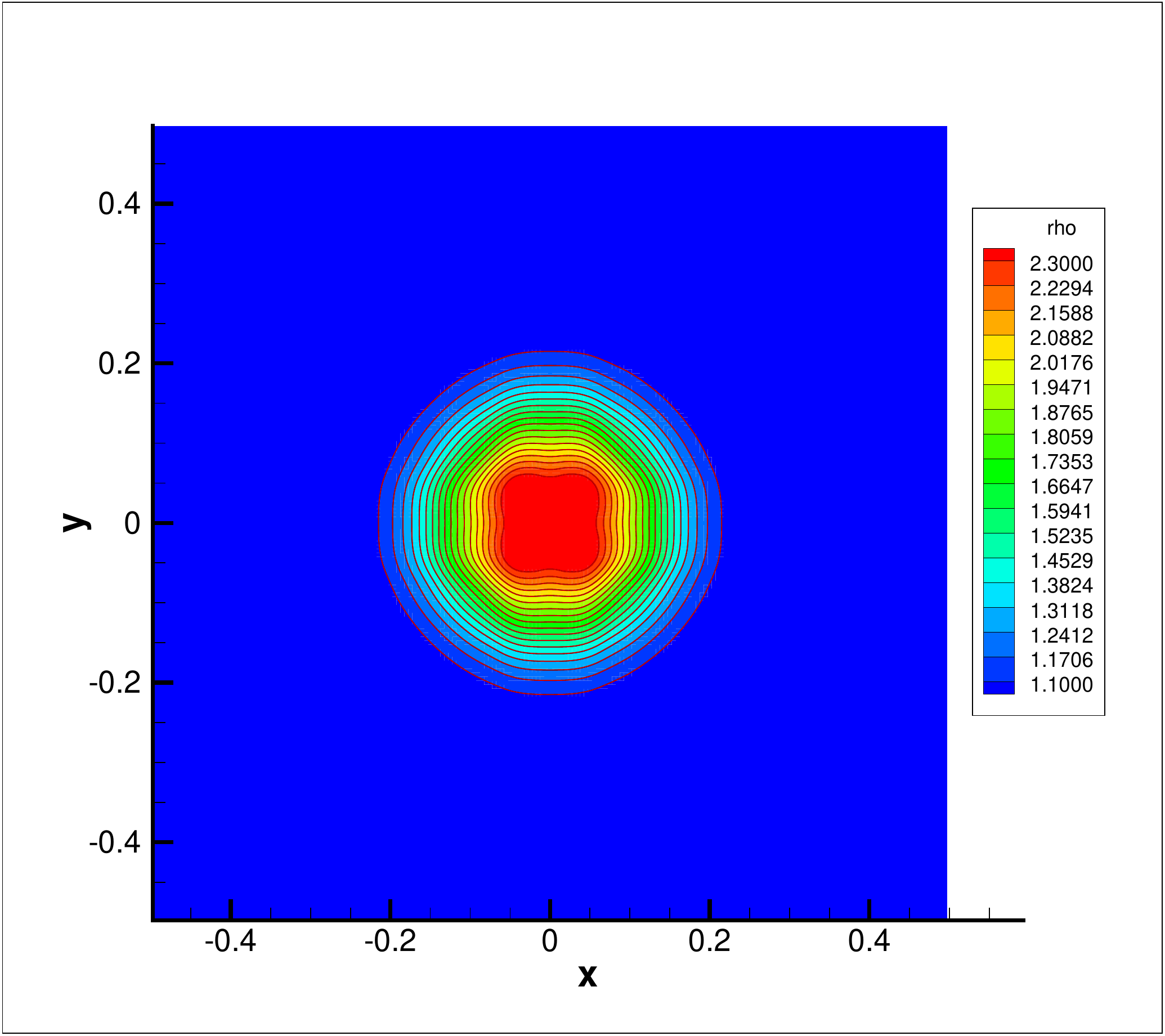}} \hfill
  \subfloat[$\theta, t = 0.08$]{\includegraphics[bb=18 21 584 500, width=0.33\textwidth,clip]{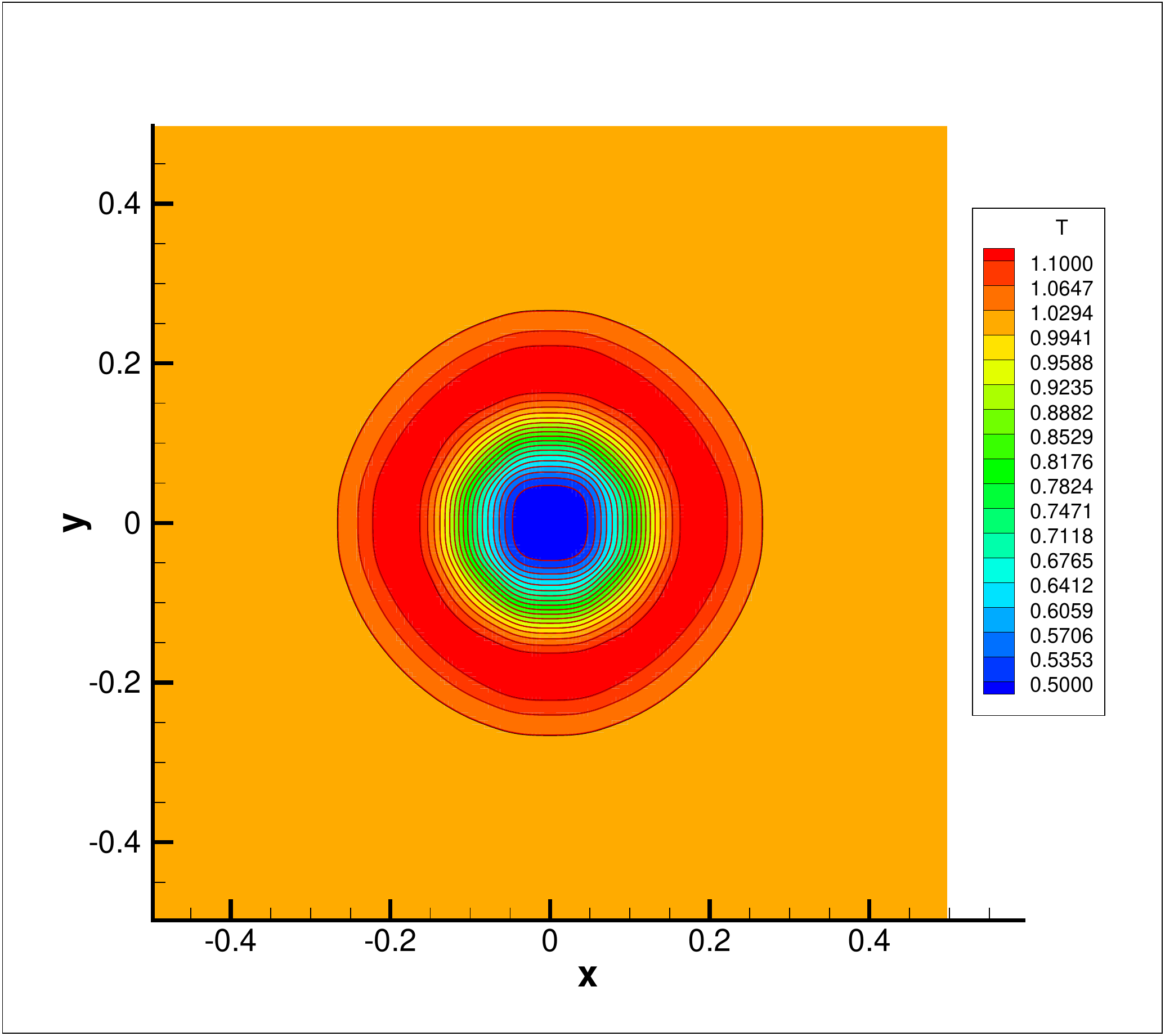}} \hfill
  \subfloat[$\sigma_{12}, t = 0.08$]{\includegraphics[bb=18 21 584 500, width=0.33\textwidth,clip]{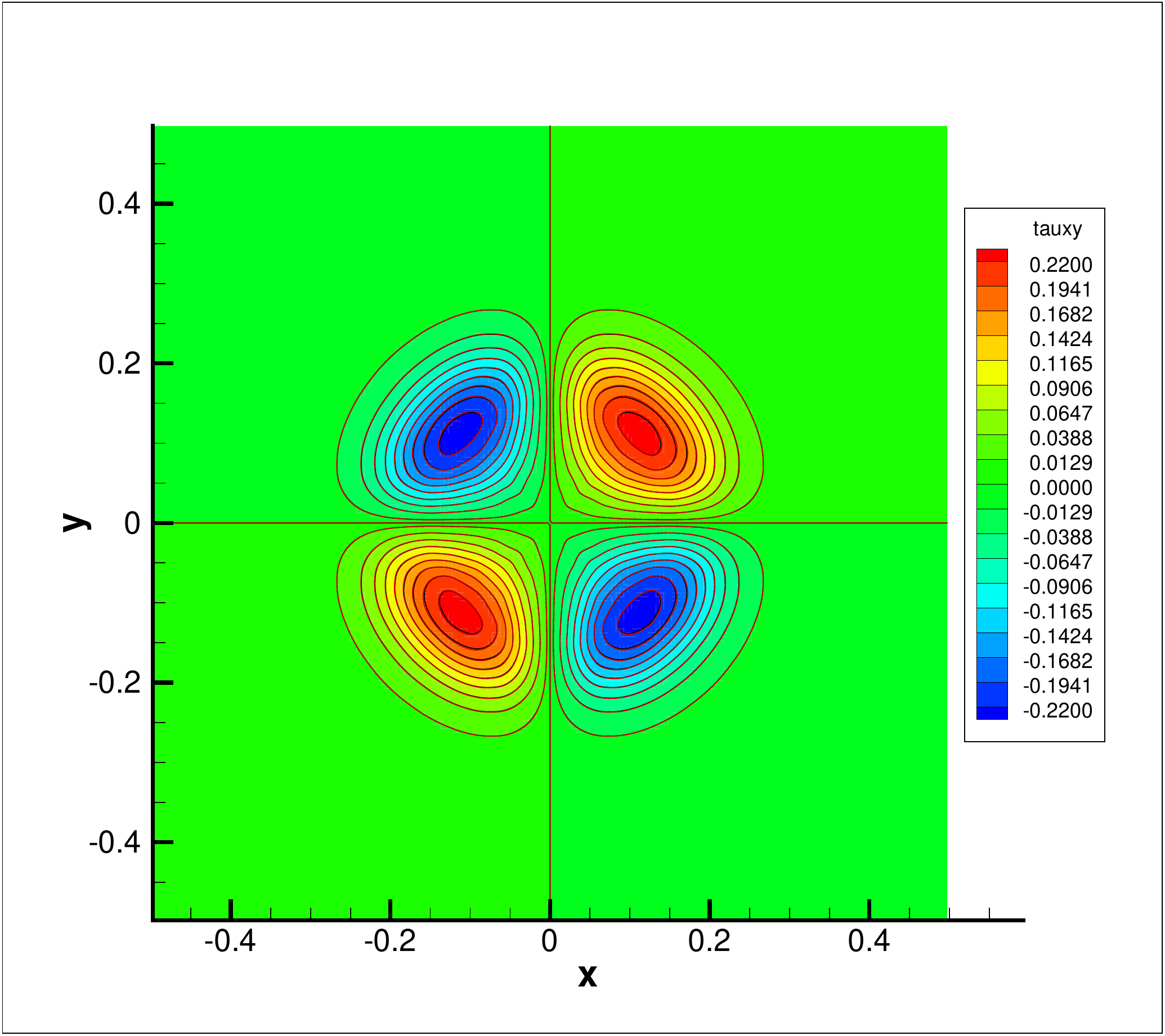}} \hfill \\
  \subfloat[$\rho, t = 0.12$]{\includegraphics[bb=18 21 584 500,width=0.33\textwidth,clip]{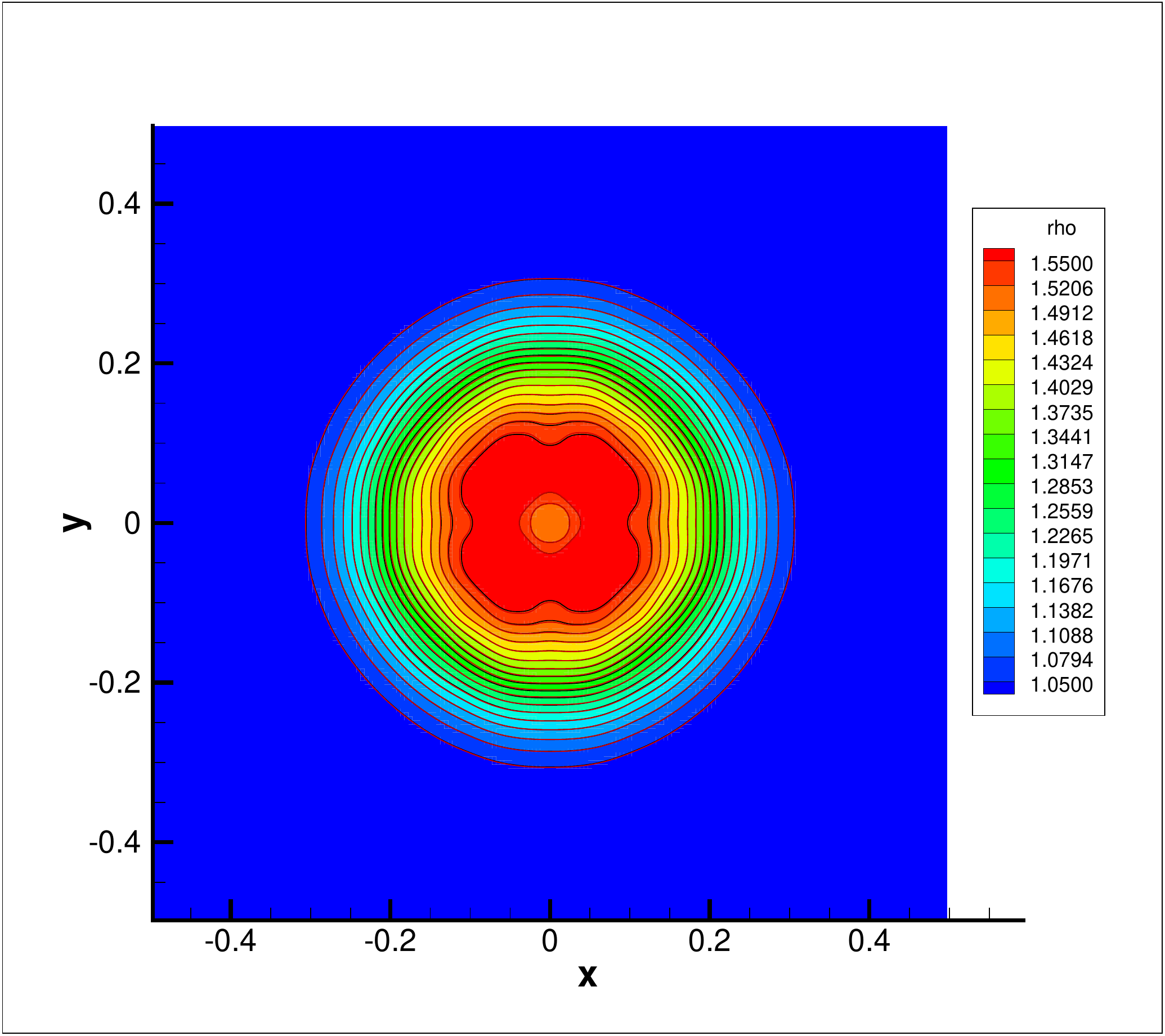}} \hfill
  \subfloat[$\theta, t = 0.12$]{\includegraphics[bb=18 21 584 500, width=0.33\textwidth,clip]{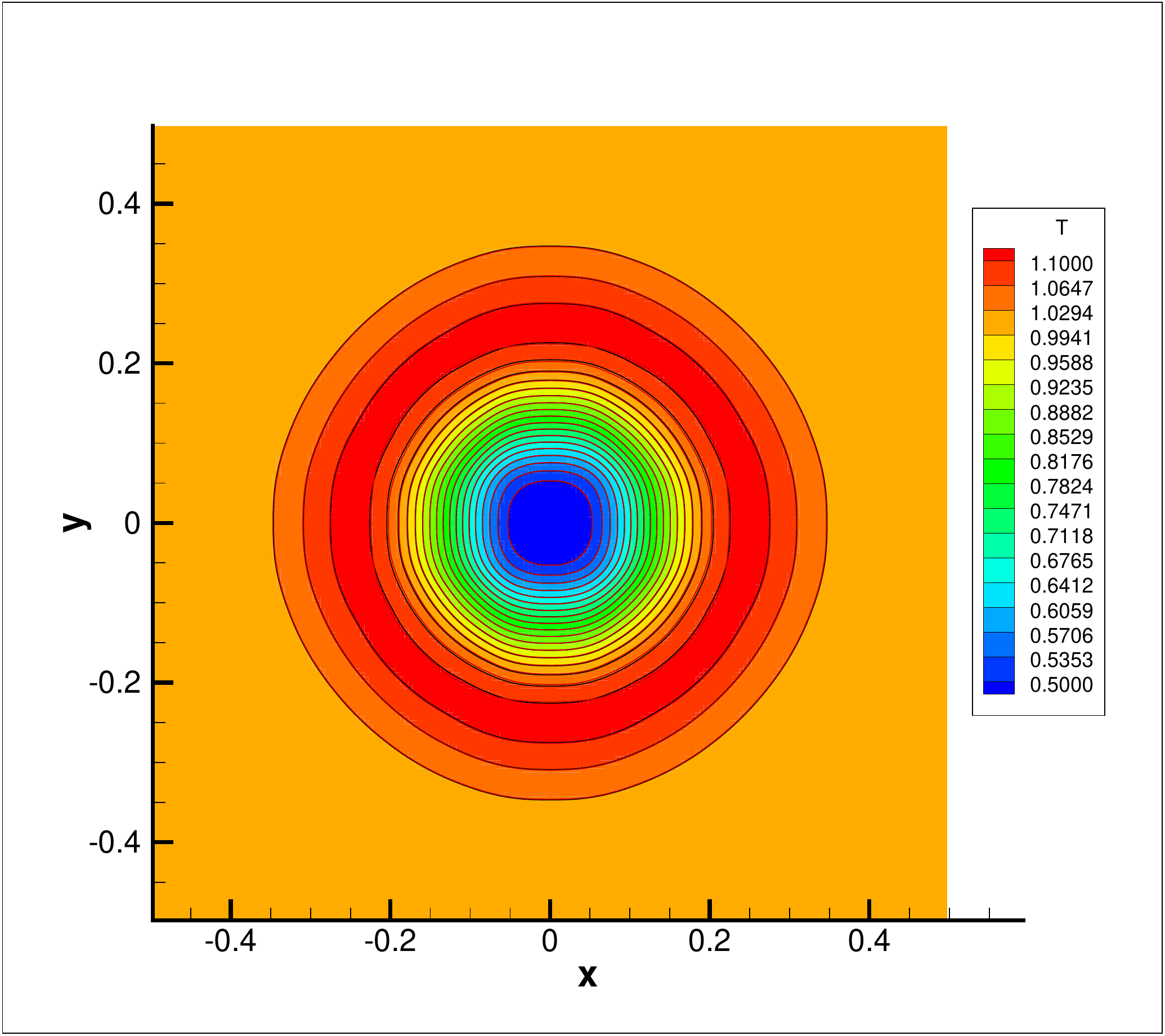}} \hfill
  \subfloat[$\sigma_{12}, t = 0.12$]{\includegraphics[bb=18 21 584 500, width=0.33\textwidth,clip]{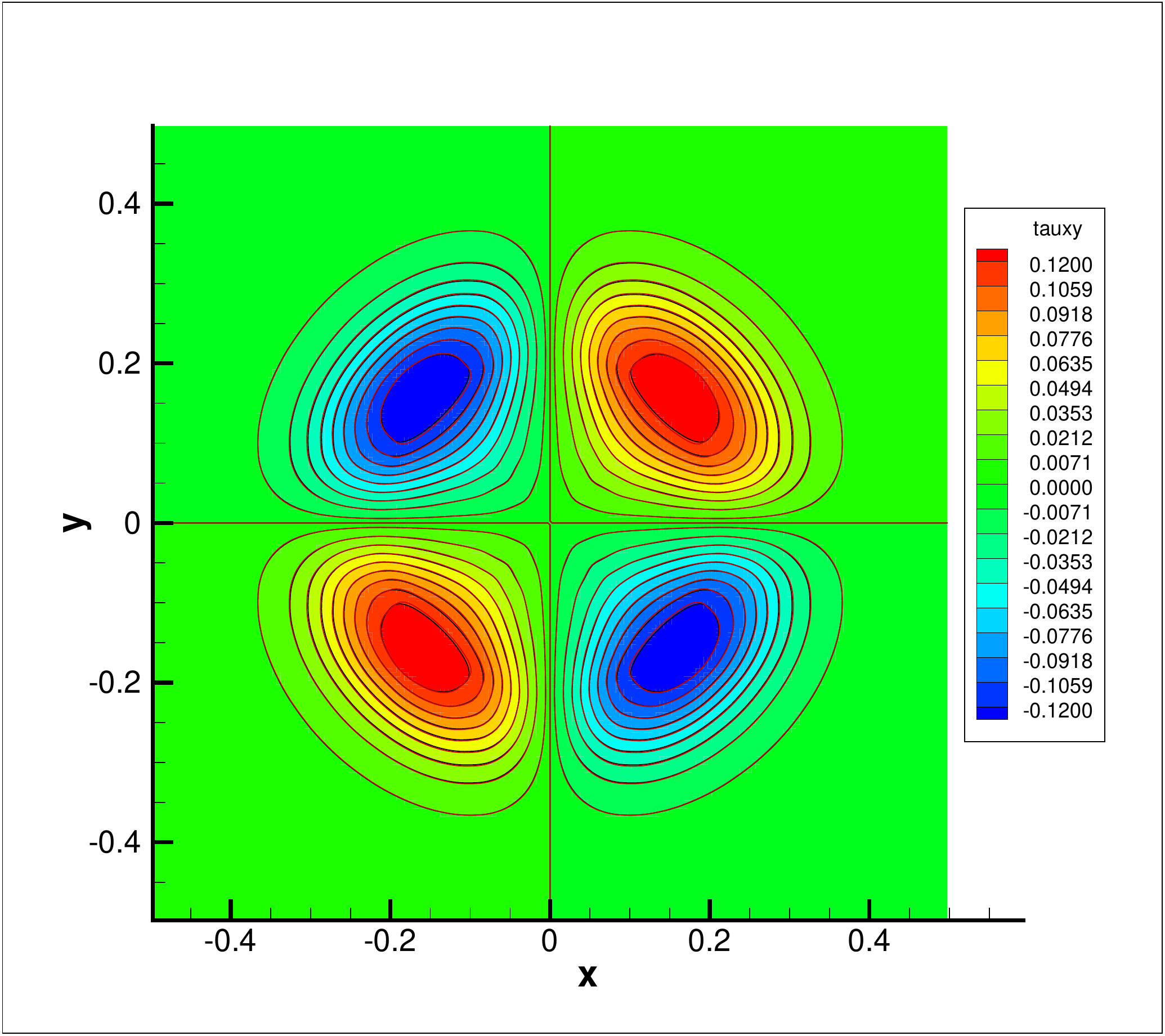}} \\
  \subfloat[$\rho, t = 0.15$]{\includegraphics[bb=18 21 584 500, width=0.33\textwidth,clip]{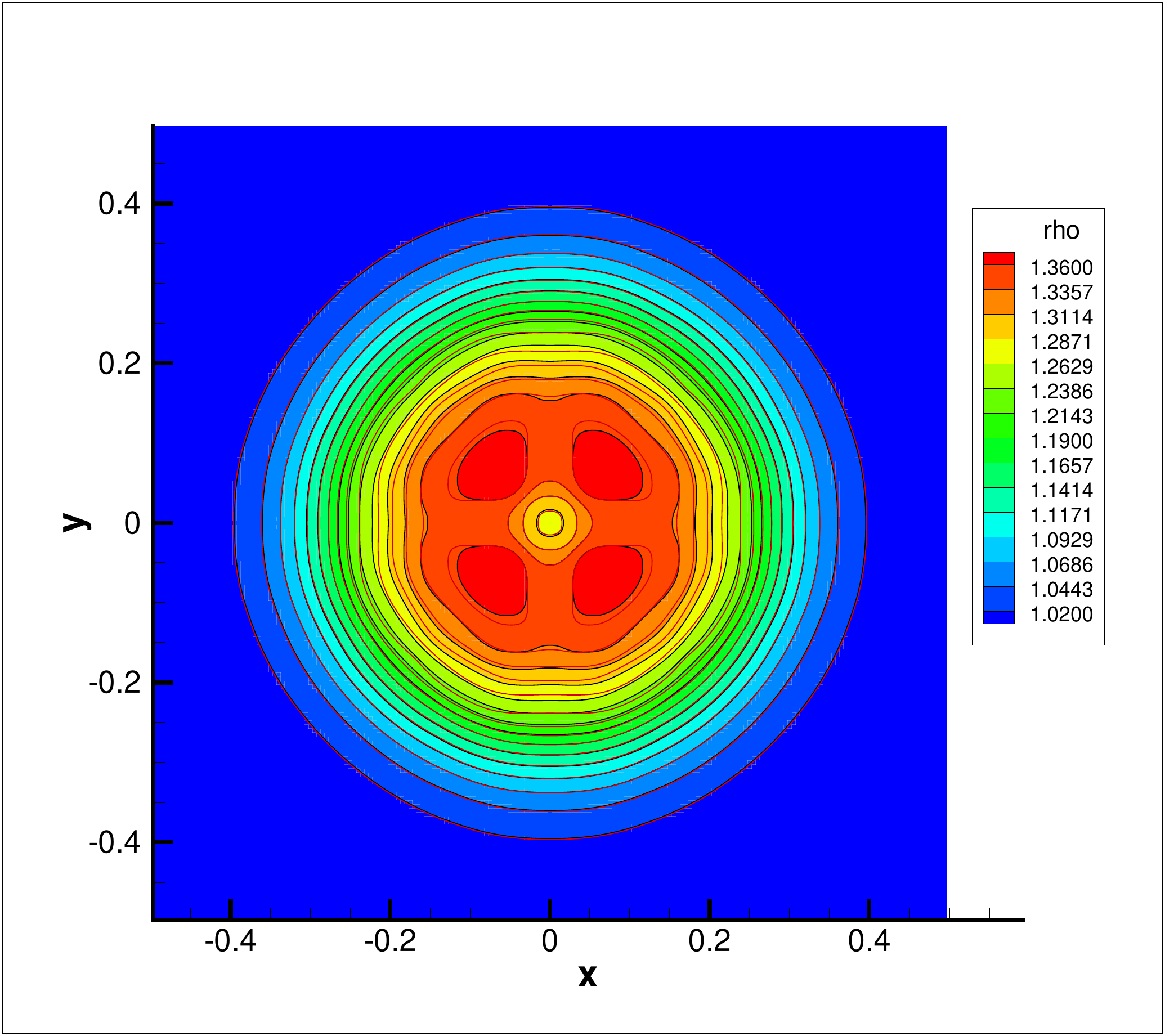}}\hfill
  \subfloat[$\theta, t = 0.15$]{\includegraphics[bb=18 21 584 500, width=0.33\textwidth,clip]{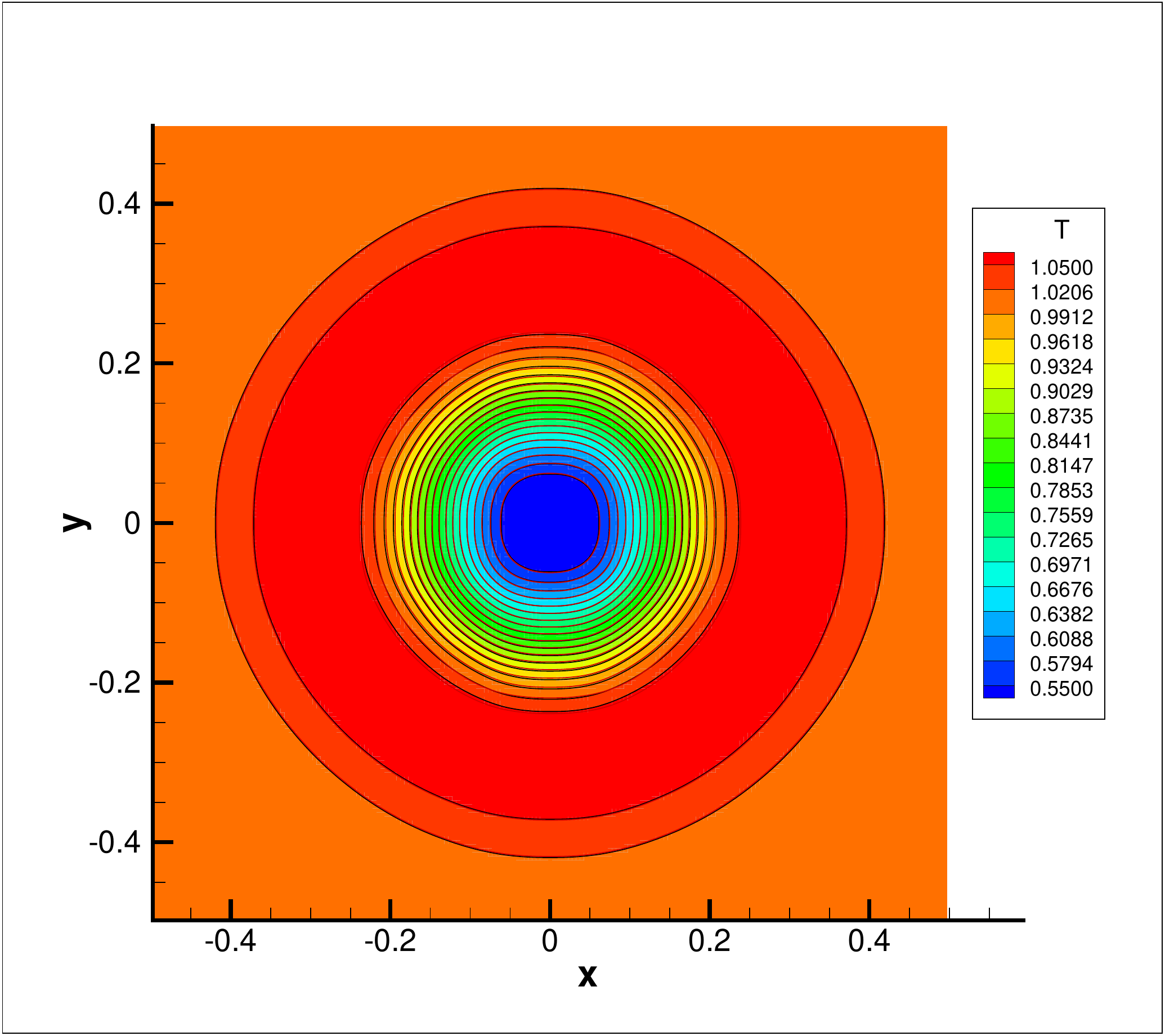}} \hfill
  \subfloat[$\sigma_{12}, t=0.15$]{\includegraphics[bb=18 21 584 500, width=0.33\textwidth,clip]{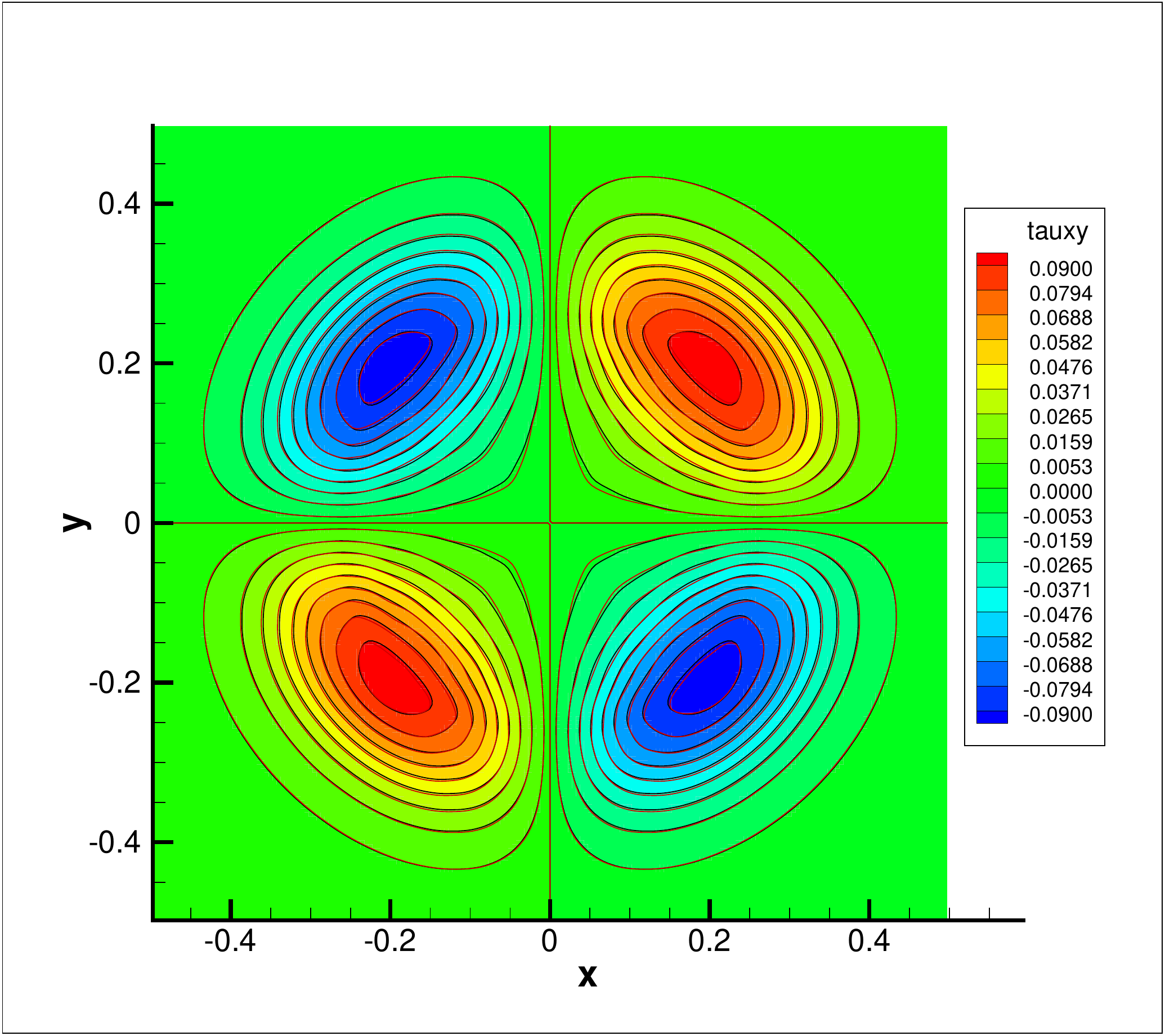}}
  \caption{Solution of the fluid diffusion problem at different
    times. The red contours are the numerical solutions of the adaptive method and the black
    contours are the reference solutions.}
  \label{fig:ex4_sol}
\end{figure}

\begin{figure}[!htb]
  \centering
  \subfloat[$\mathcal{E}_{\rho}, t = 0.04$]{\includegraphics[bb=18 21 584 500, width=0.33\textwidth,clip]{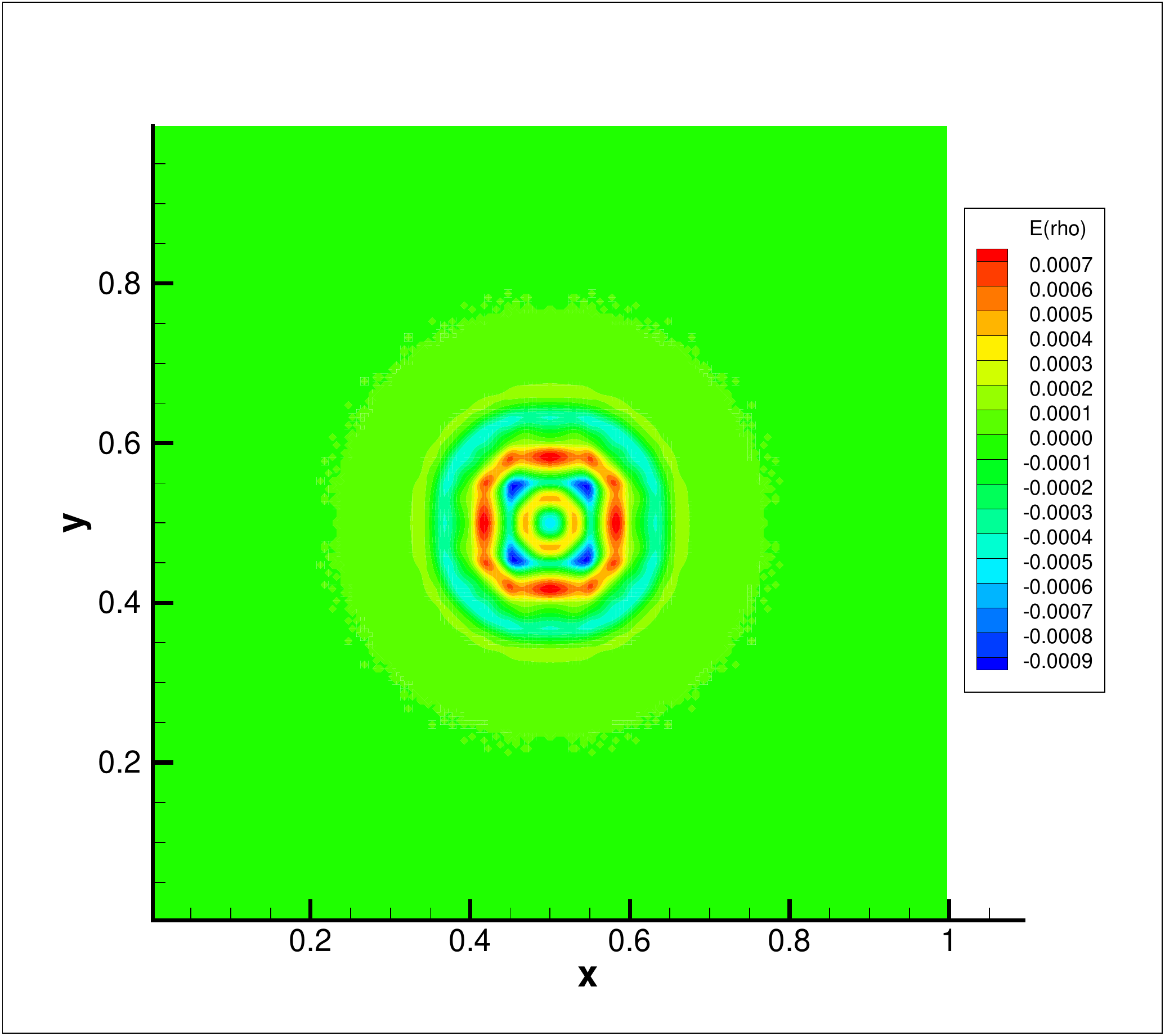}}\hfill
  \subfloat[$\mathcal{E}_{\theta}, t = 0.04$]{\includegraphics[bb=18 21 584 500, width=0.33\textwidth,clip]{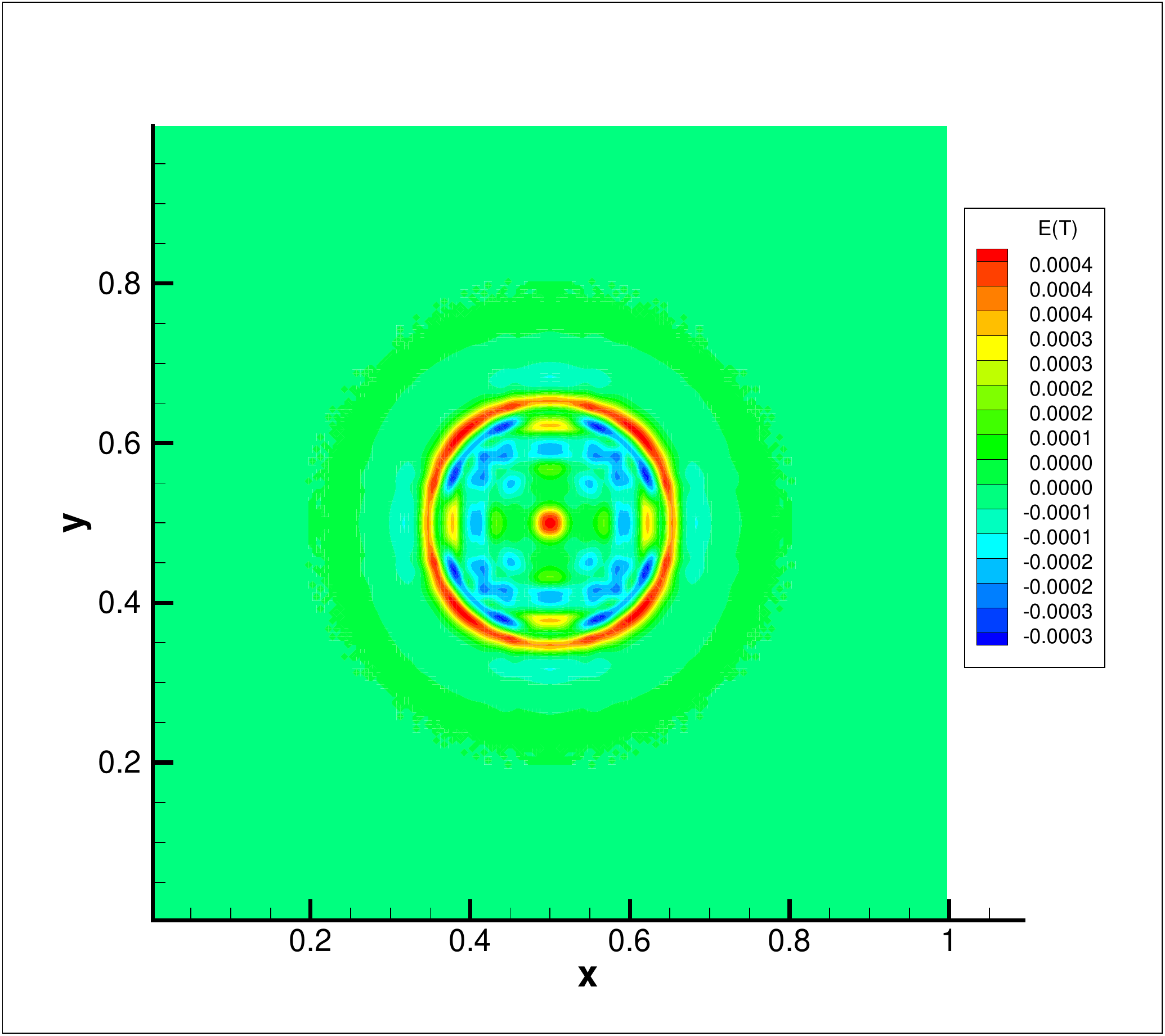}}\hfill
  \subfloat[$\mathcal{E}_{\sigma_{12}}, t = 0.04$ ]{\includegraphics[bb=18 21 584 500, width=0.33\textwidth,clip]{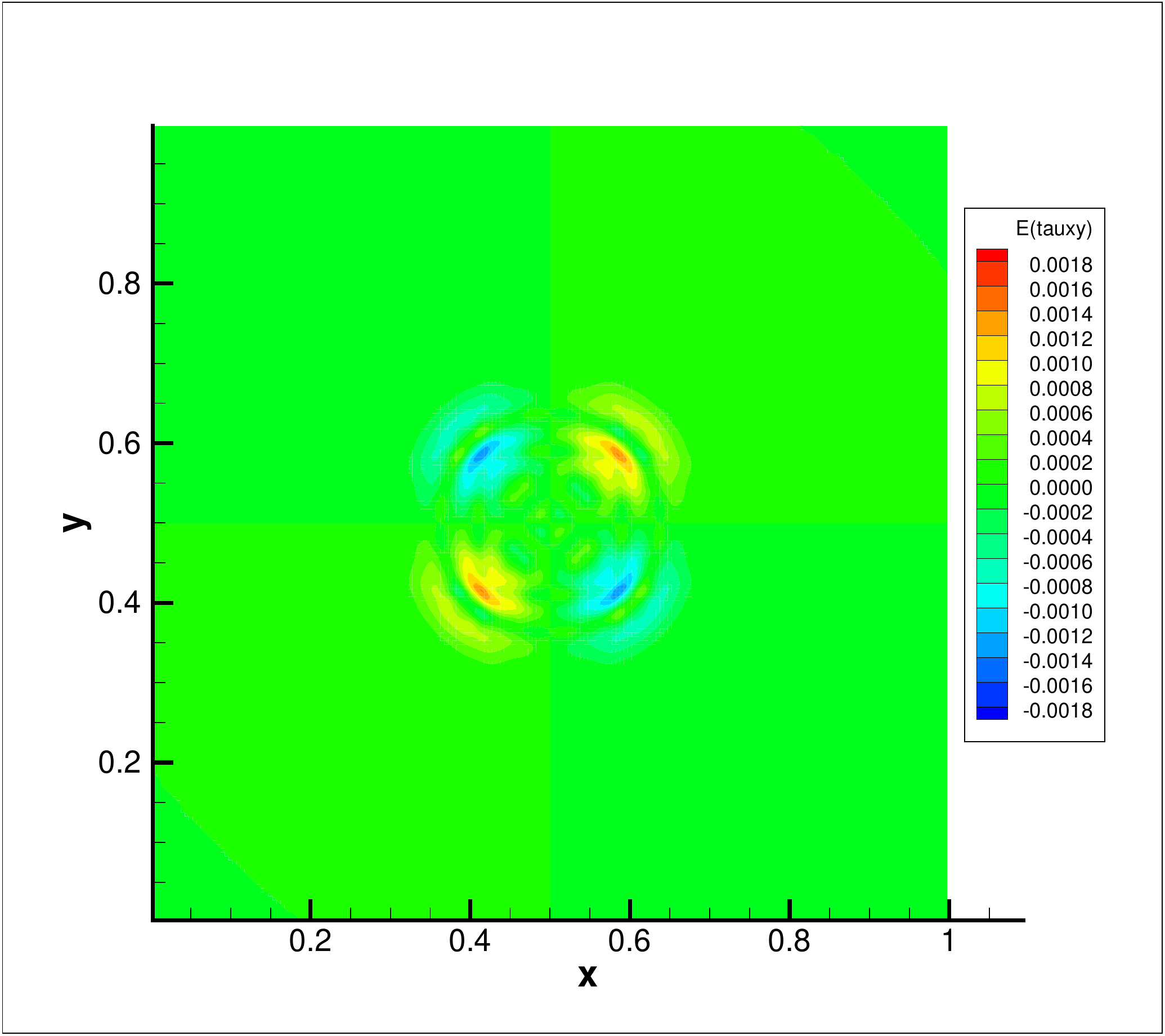}}\hfill \\
  \subfloat[$\mathcal{E}_{\rho}, t = 0.08$]{\includegraphics[bb=18 21 584 500, width=0.33\textwidth,clip]{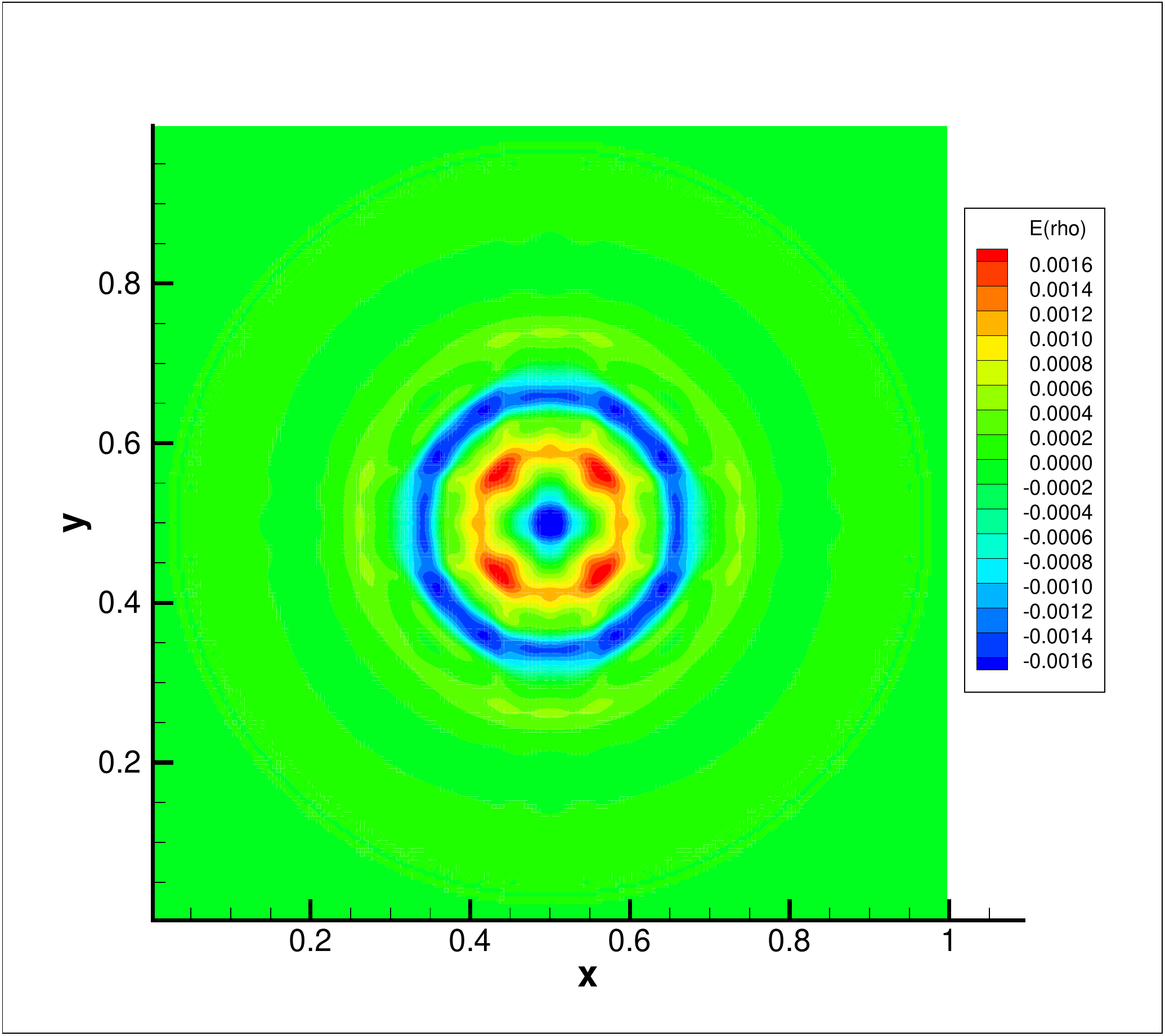}} \hfill
  \subfloat[$\mathcal{E}_{\theta}, t = 0.08$]{\includegraphics[bb=18 21 584 500, width=0.33\textwidth,clip]{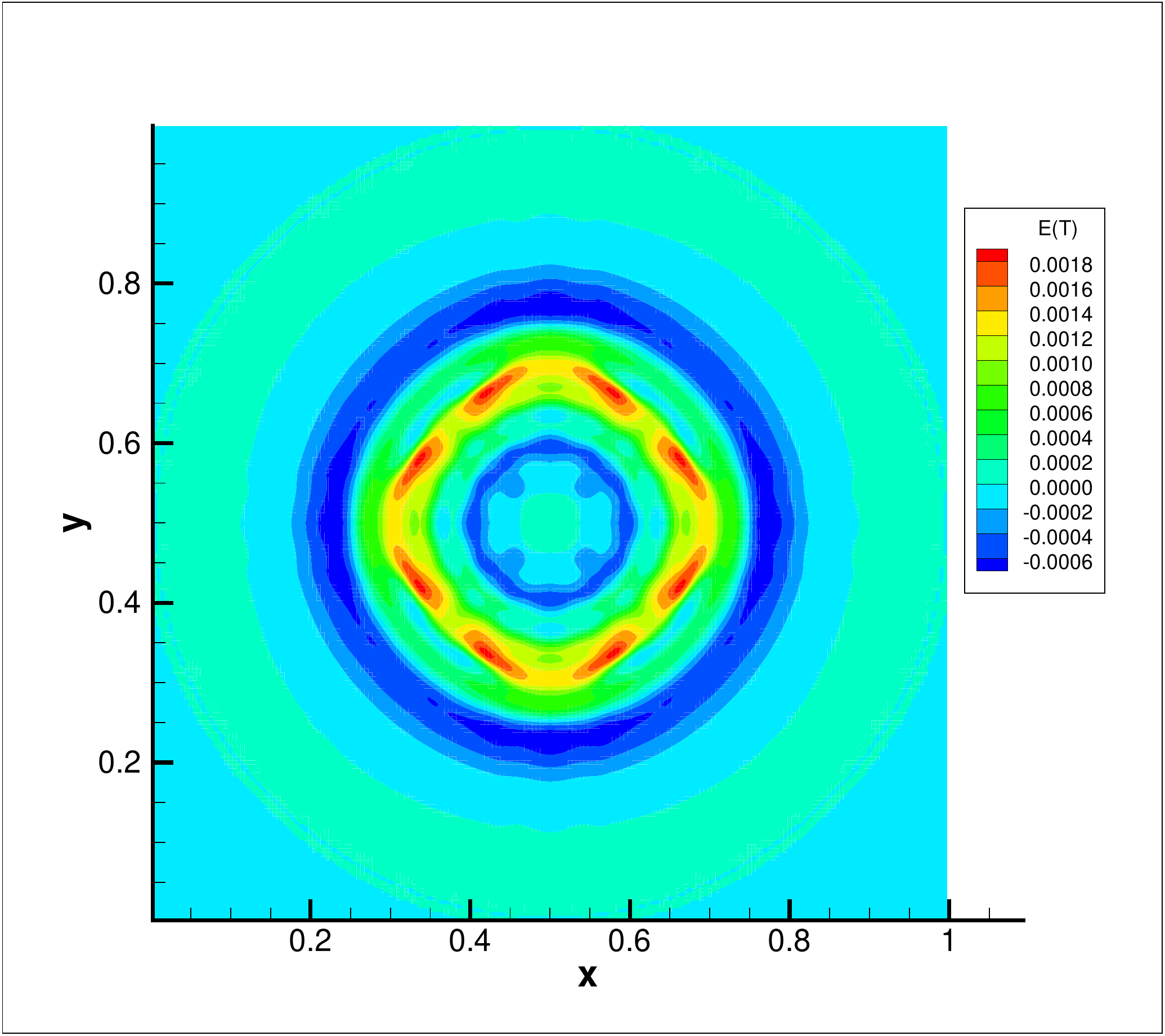}} \hfill
  \subfloat[$\mathcal{E}_{\sigma_{12}}, t = 0.08$]{\includegraphics[bb=18 21 584 500, width=0.33\textwidth,clip]{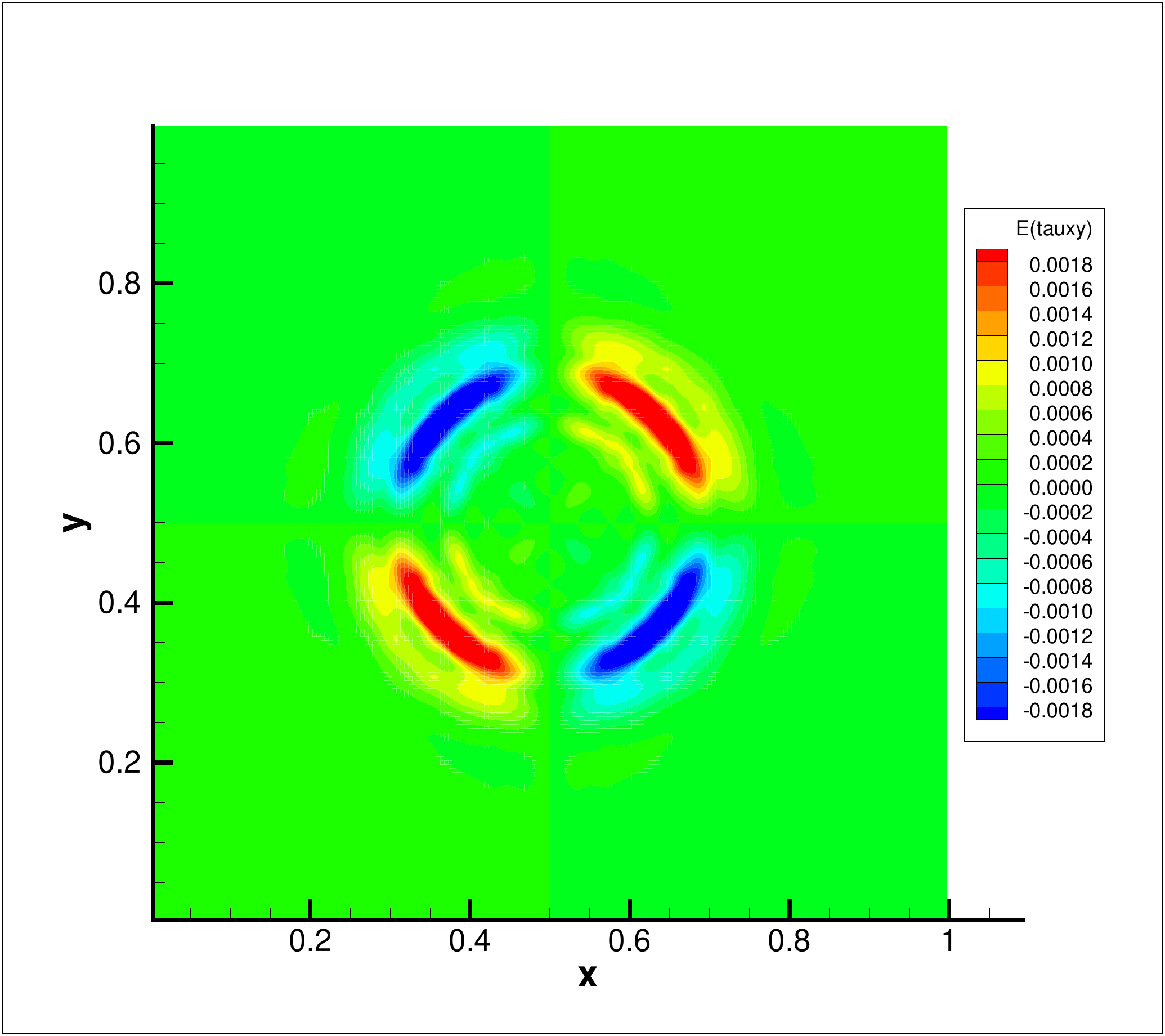}} \hfill \\
  \subfloat[$\mathcal{E}_{\rho}, t = 0.12$]{\includegraphics[bb=18 21 584 500,width=0.33\textwidth,clip]{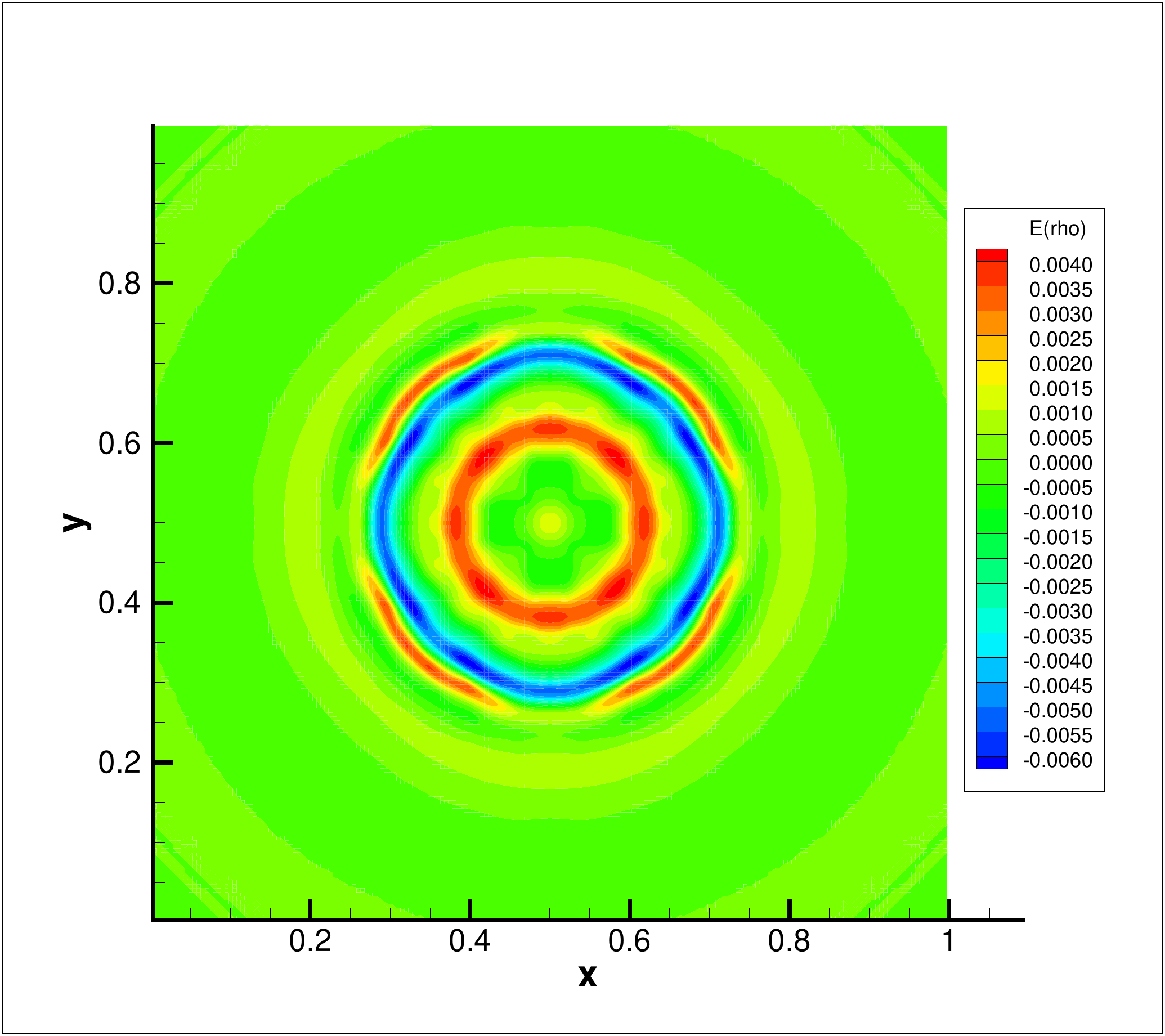}} \hfill
  \subfloat[$\mathcal{E}_{\theta}, t = 0.12$]{\includegraphics[bb=18 21 584 500, width=0.33\textwidth,clip]{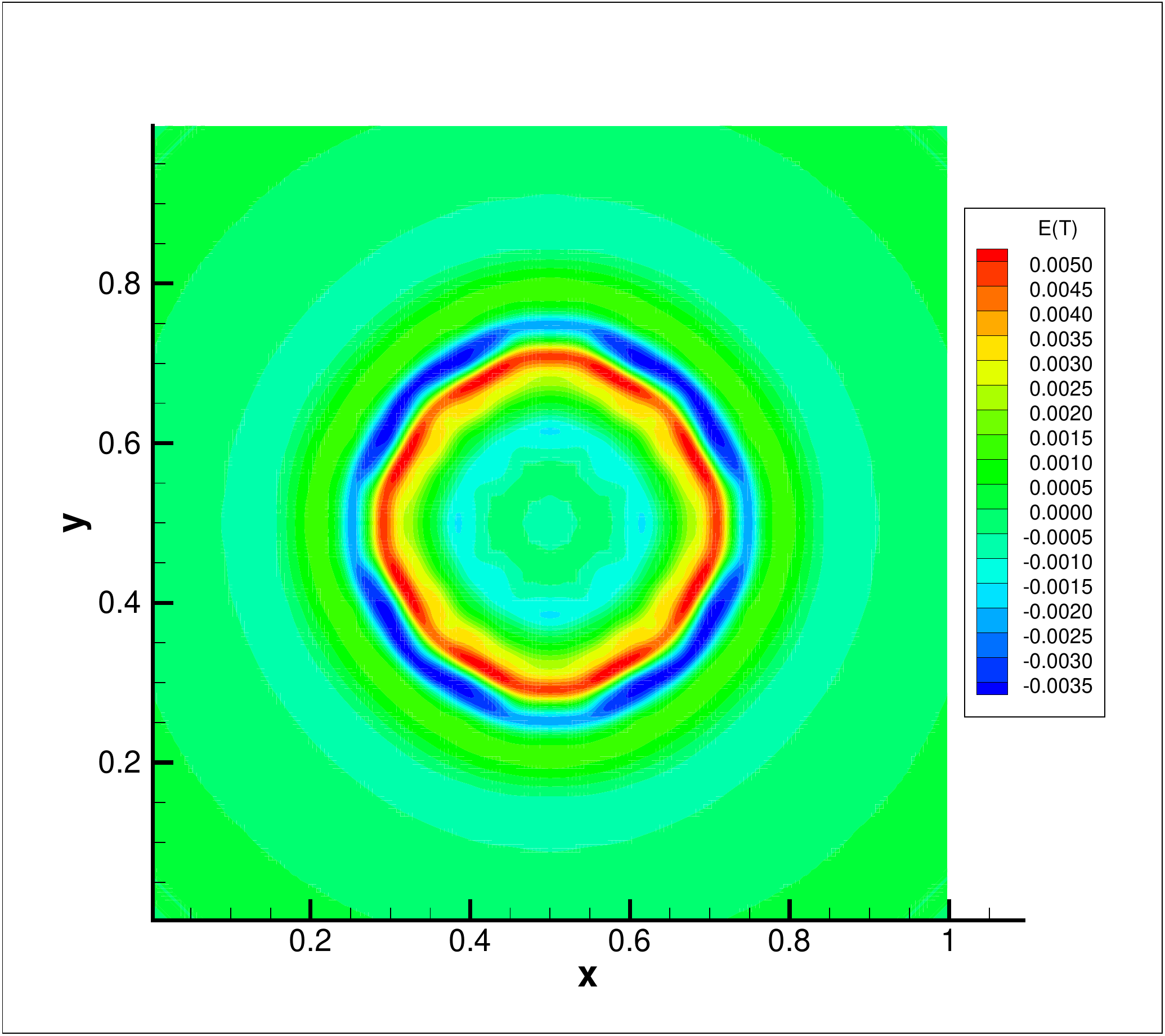}} \hfill
  \subfloat[$\mathcal{E}_{\sigma_{12}}, t = 0.12$]{\includegraphics[bb=18 21 584 500, width=0.33\textwidth,clip]{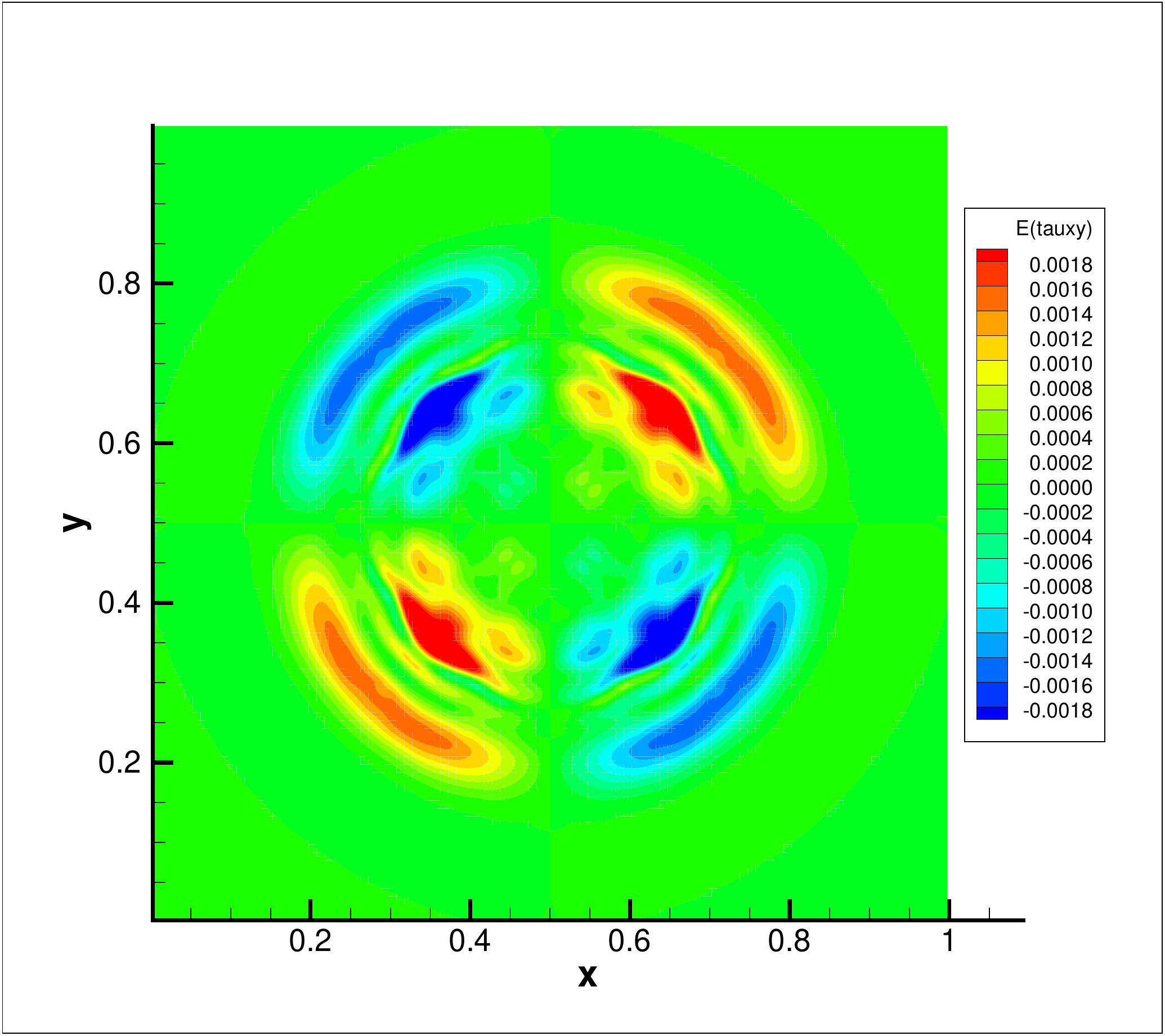}} \\
  \subfloat[$\mathcal{E}_{\rho}, t = 0.15$]{\includegraphics[bb=18 21 584 500, width=0.33\textwidth,clip]{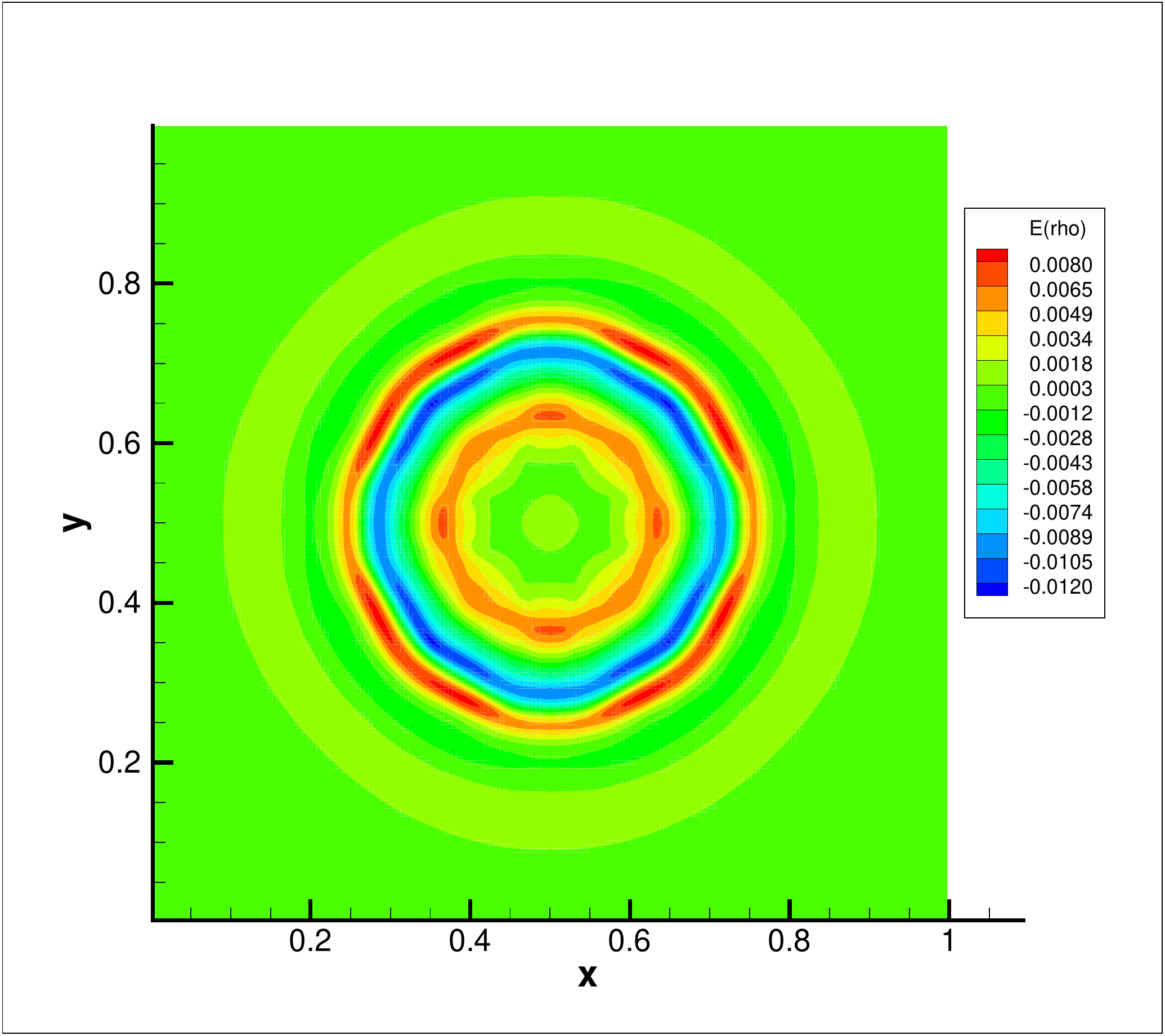}}\hfill
  \subfloat[$\mathcal{E}_{\theta}, t = 0.15$]{\includegraphics[bb=18 21 584 500, width=0.33\textwidth,clip]{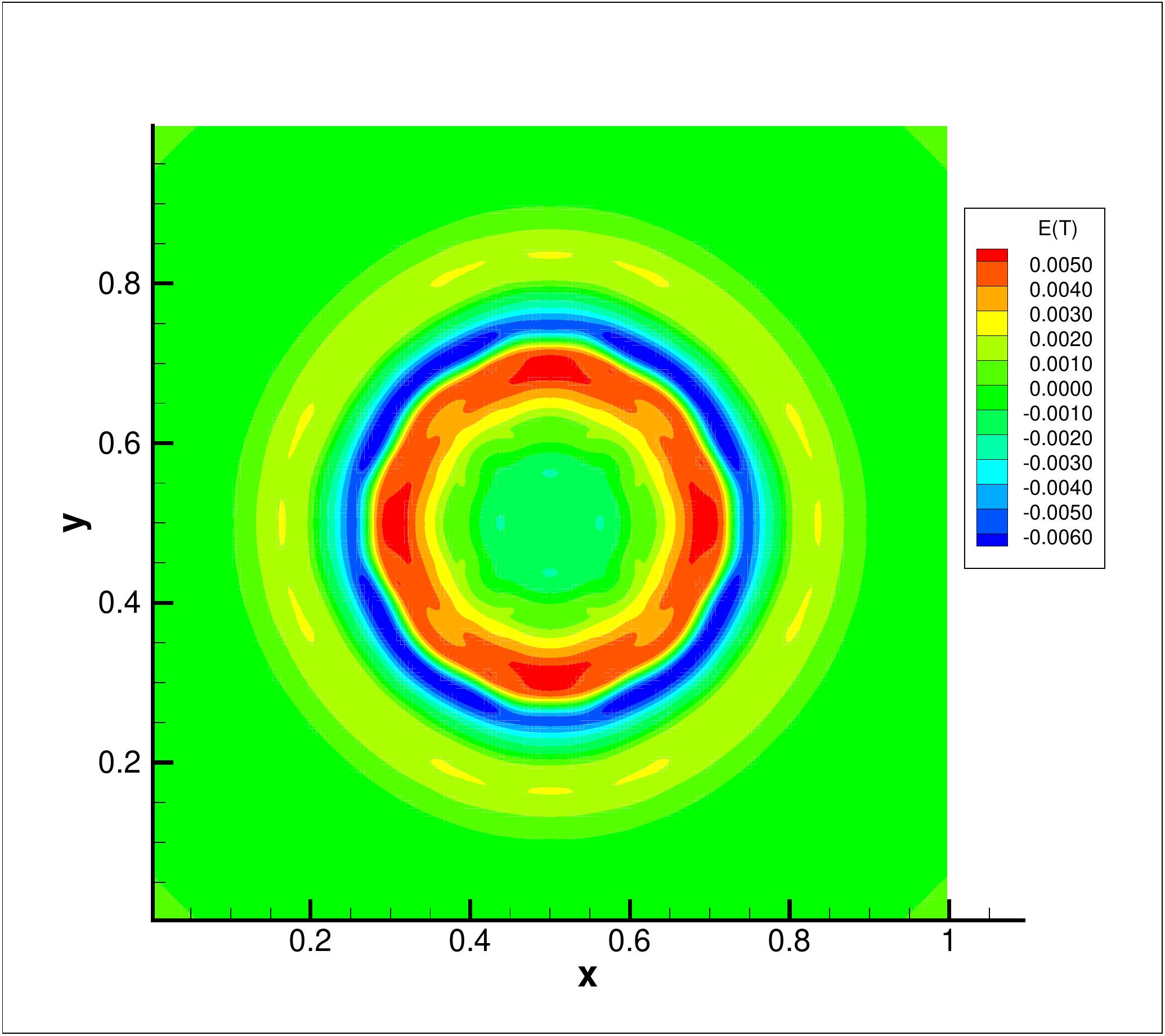}} \hfill
  \subfloat[$\mathcal{E}_{\sigma_{12}}, t=0.15$]{\includegraphics[bb=18 21 584 500, width=0.33\textwidth,clip]{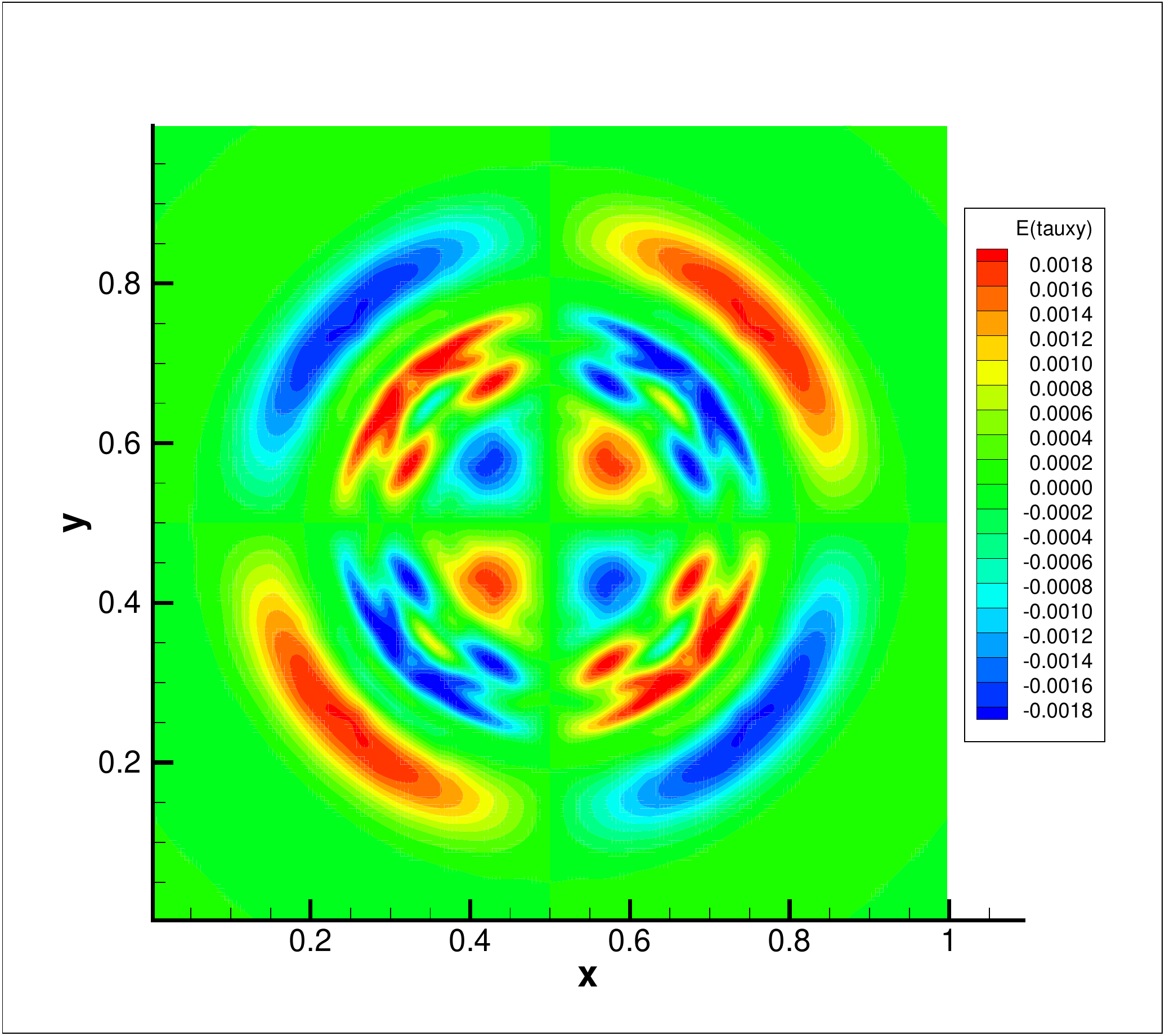}}
  \caption{Error \eqref{eq:ex3_error} of the fluid diffusion problem at different
    times.}
  \label{fig:ex4_l2_error}
\end{figure}

\clearpage

The evolution of the distribution of $M_0$ is given in Figure
\ref{fig:ex4_M0}. Initially, the mixing of two fluid regions creates
some non-equilibrium. However, due to the high density in the central
part of the domain, fast collisions of particles keep the fluid
near its local equilibrium, so that $M_0$ is generally
not too large at $t = 0.04$. As $t$ increases, both the density and
the temperature in the central area decrease, and correspondingly,
$M_0$ needs to be increased to capture the non-equilibrium effects.
From $t = 0.12$ to $t = 0.15$, although the fluid has spread more
widely, the outside layers are almost in the equilibrium states, and
therefore the distribution of $M_0$ does not change significantly.
Compared with the reference solution, the average CPU time per time
step is reduced from $1473$ seconds to $123.7$ seconds using our
method.

\begin{figure}[!htb]
  \centering
  \hfill \subfloat[$t = 0. 04$]{\includegraphics[bb=18 21 584 500,width=0.33\textwidth,clip]{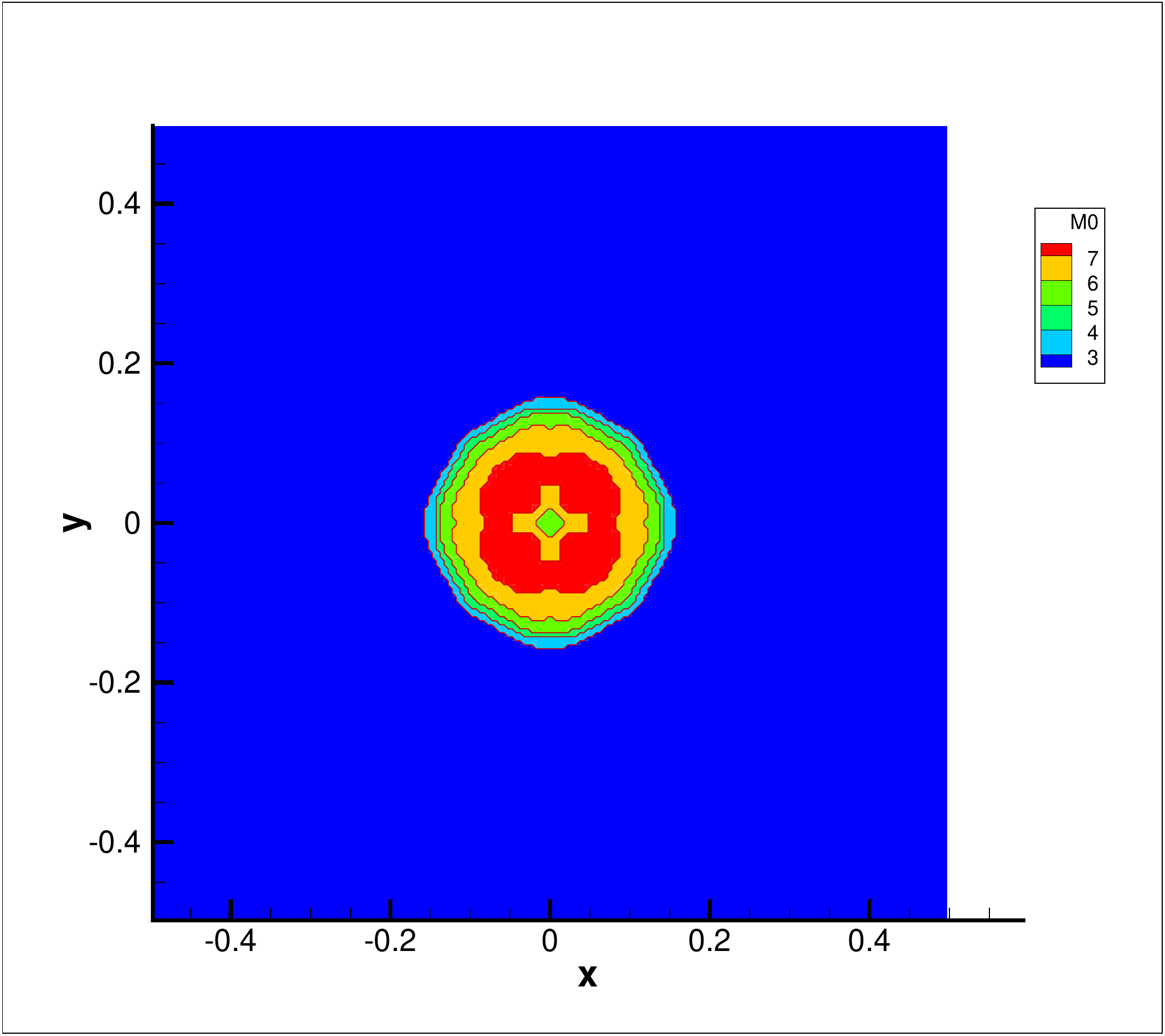}}\hfill
  \subfloat[$t = 0.08$]{\includegraphics[bb=18 21 584 500,width=0.33\textwidth,clip]{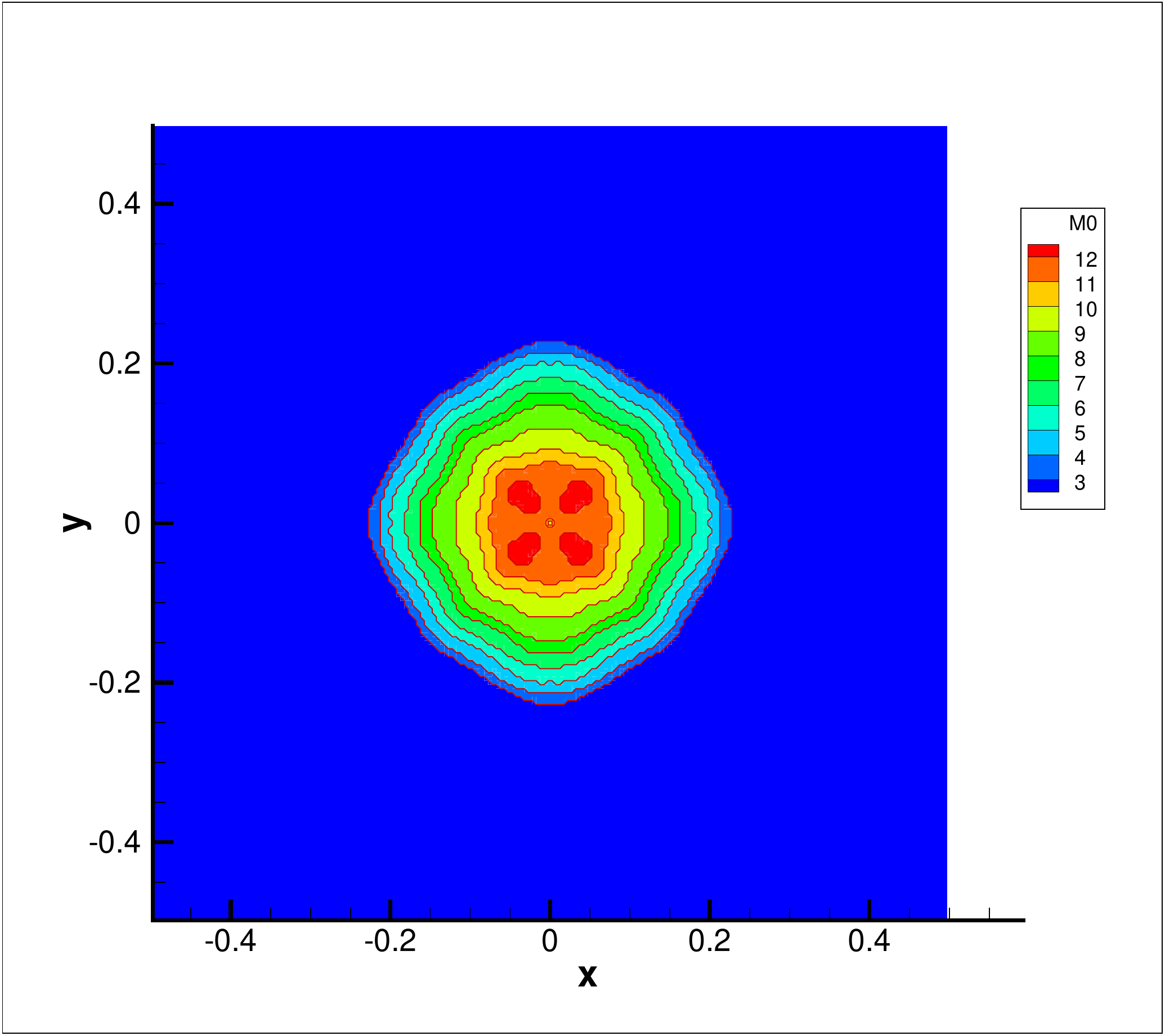}} \hfill \mbox{}\\
  \hfill \subfloat[$t = 0.12$]{\includegraphics[bb=18 21 584 500,width=0.33\textwidth,clip]{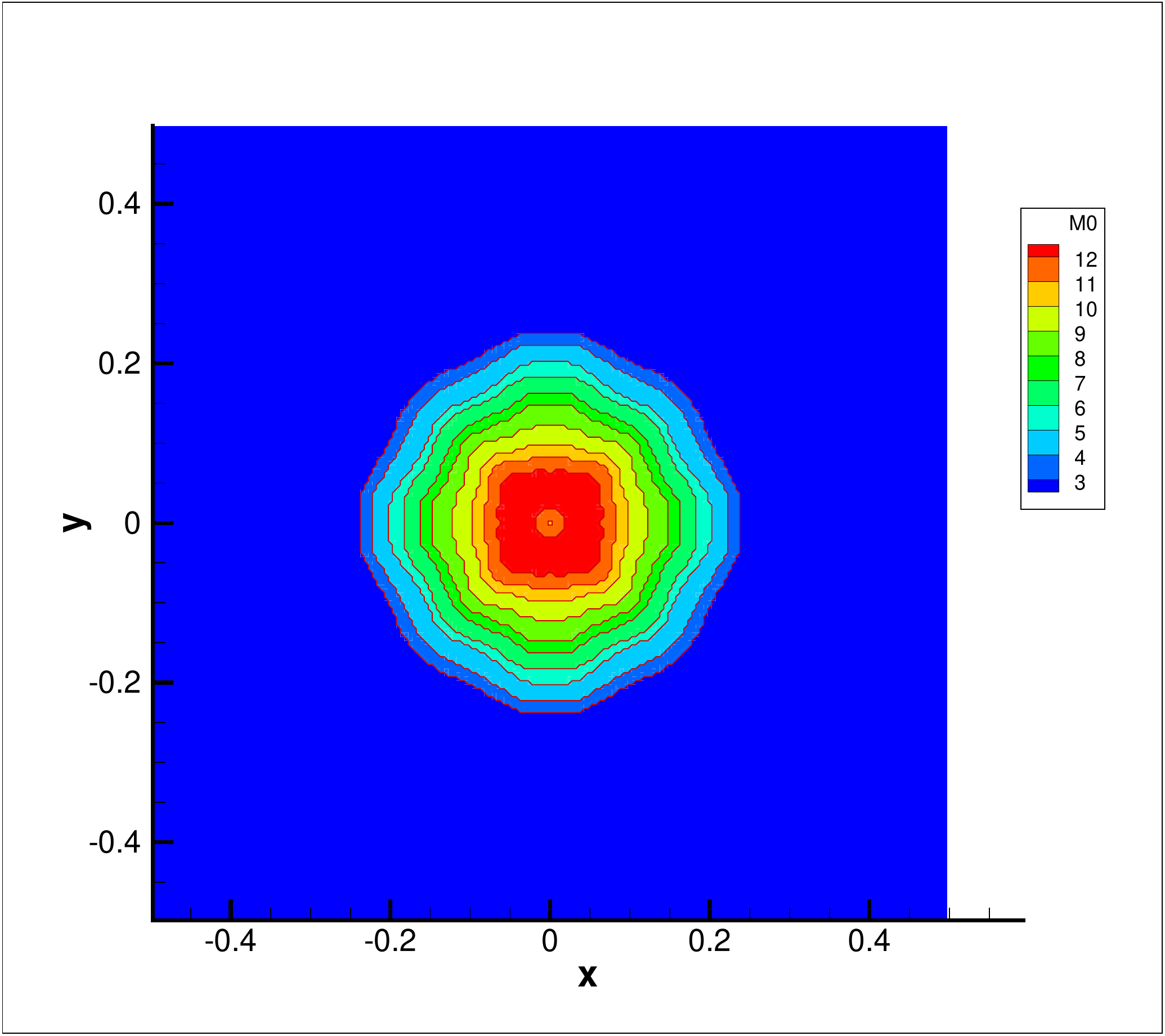}}\hfill
  \subfloat[$t = 0.15$]{\includegraphics[bb=18 21 584 500,width=0.33\textwidth,clip]{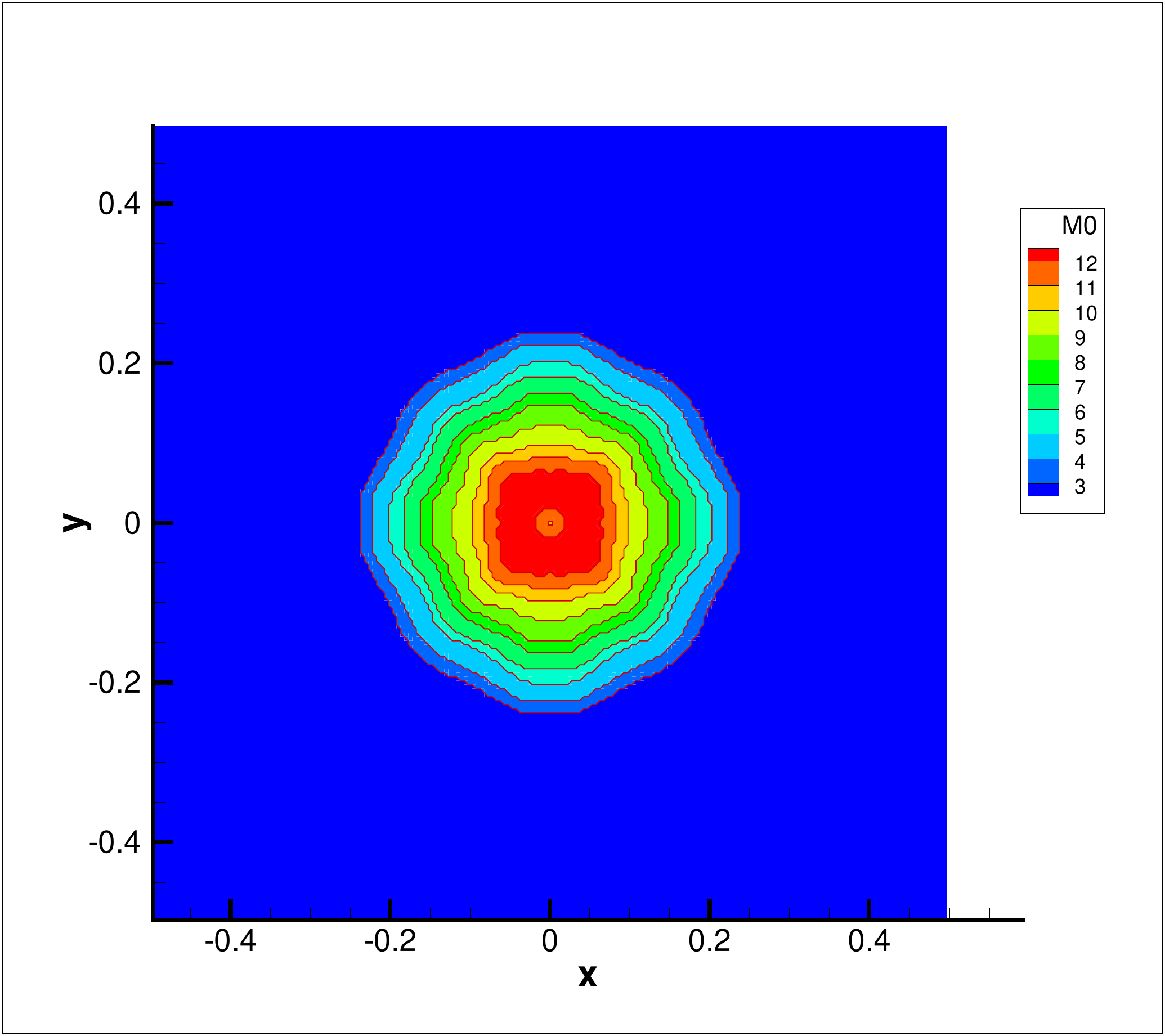}} \hfill \mbox{} 
 \caption{Distribution of $M_0$ for fluid diffusion at different times. } \label{fig:ex4_M0}
\end{figure}

\subsubsection{Lid-driven cavity flow} \label{sec:cavity_flow}
Our second example assumes that the gas is in a square cavity $\Omega = [0,1] \times [0,1]$, and we scale the collision term such that the Knudsen number is $0.1$. The top lid of the cavity moves horizontally at a constant speed $v = 0.0208$, and all the four sides of $\Omega$ are assumed to be fully diffusive. Initially, the fluid is in the equilibrium state with density $\rho = 1$, velocity $\bu = \boldsymbol{0}$ and temperature $\theta = 1$. The friction between the lid and the gas causes the rotation of the fluid, and a steady state will be developed after a sufficiently long time. Such an example has been widely studied in the literature \cite{John2010, Liu2016}. Here we discretize the domain $\Omega$ with a uniform grid of size $100\times 100$. The parameters in \eqref{eq:truncated_series} are set to be $M = 40$, $\bar{\bu} = \boldsymbol{0}$ and $\bar{\theta} = 1$. The reference solution will again be provided by the uniform $M_0$ equal to $15$.

To study the effect of adaptive parameters, we consider two groups of thresholds:  $(\epsilon_1^1, \epsilon_2^1) = (0.05, 0.20)$ and $(\epsilon_1^2, \epsilon_2^2) = (0.025, 0.08)$. Clearly, the second set of parameters is tighter and will lead to larger $M_0$ in the simulations. The evolution of the fluid states is plotted in Figure \ref{fig:ex3_sol}, which includes the density $\rho$, the temperature $\theta$, and shear stress $\sigma_{12}$  at time $t =0.5, 1, 5$ and the steady state. One can observe the singular flow structure in the top two corners of the cavity, where the distributions are distorted due to the inconsistent boundary velocities. In Figure \ref{fig:ex3_sol}, three sets of solutions generally agree with each other. Some differences can be observed in the second column representing the temperature contours. Despite this, the relative difference in temperature between our results and the reference solution is well below $0.05\%$. In the first and third columns, all the three sets of contour lines almost coincide with each other, indicating the effectiveness of our error indicator.

The distribution of $M_0$ is given in Figure \ref{fig:ex3_M0}. Since the flow is driven by the movement of the top lid, non-equilibrium emerges from the upper part of the domain, and then expands downward as $t$ increases. For the first set of parameters (left column), the value of $M_0$ reaches the cap $15$ only near the boundary of the domain, where the distribution function is discontinuous, while in the right column, more than a half of the grid cells are covered by the collision term with $M_0 = 15$, which is consistent with our prediction.

\begin{figure}[!htb]
  \centering
  \subfloat[$\rho, t = 0.5$]{\includegraphics[bb=18 21 584 500, width=0.33\textwidth,clip]{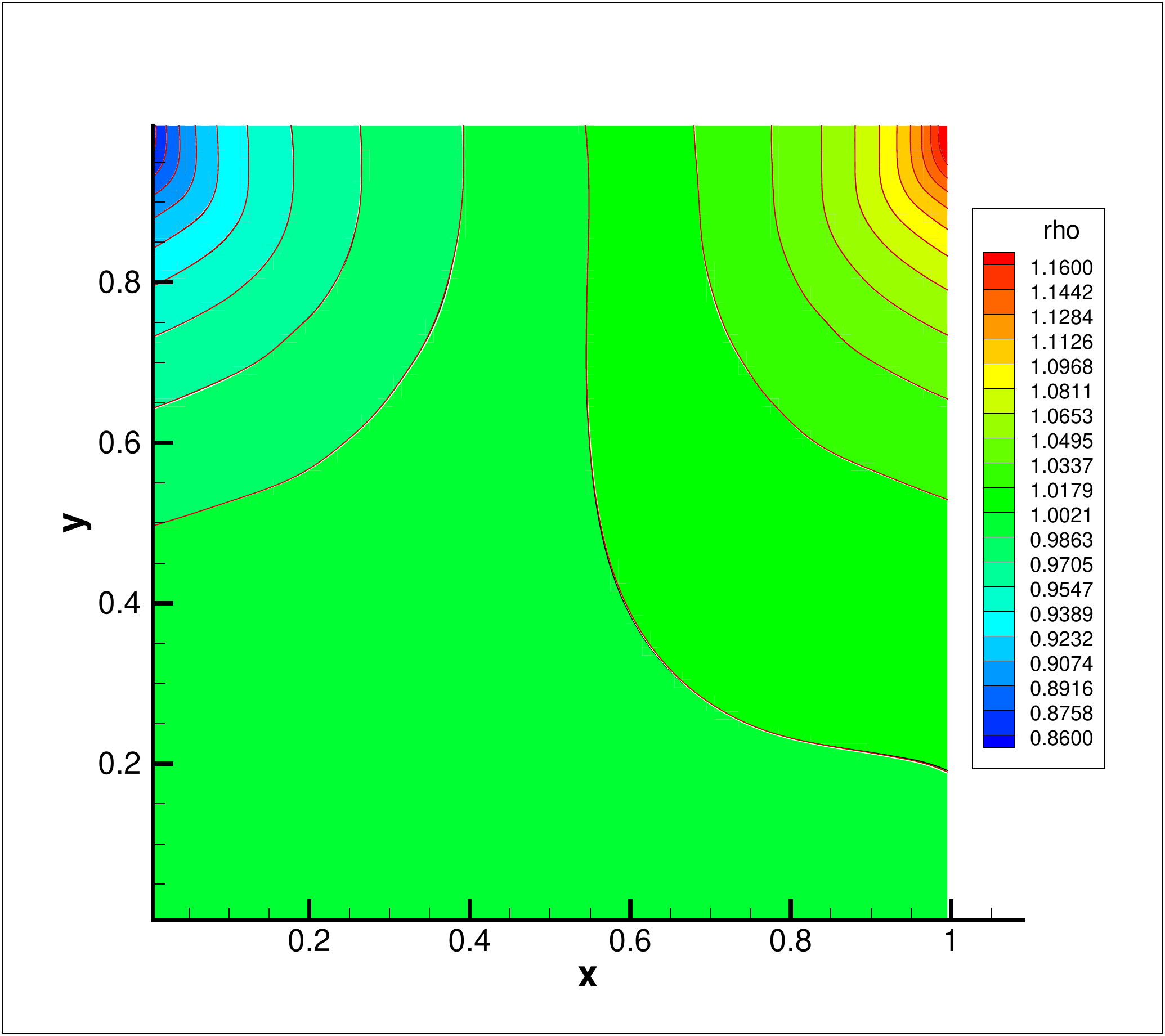}}\hfill
  \subfloat[$\theta, t = 0.5$ ]{\includegraphics[bb=18 21 584 500, width=0.33\textwidth,clip]{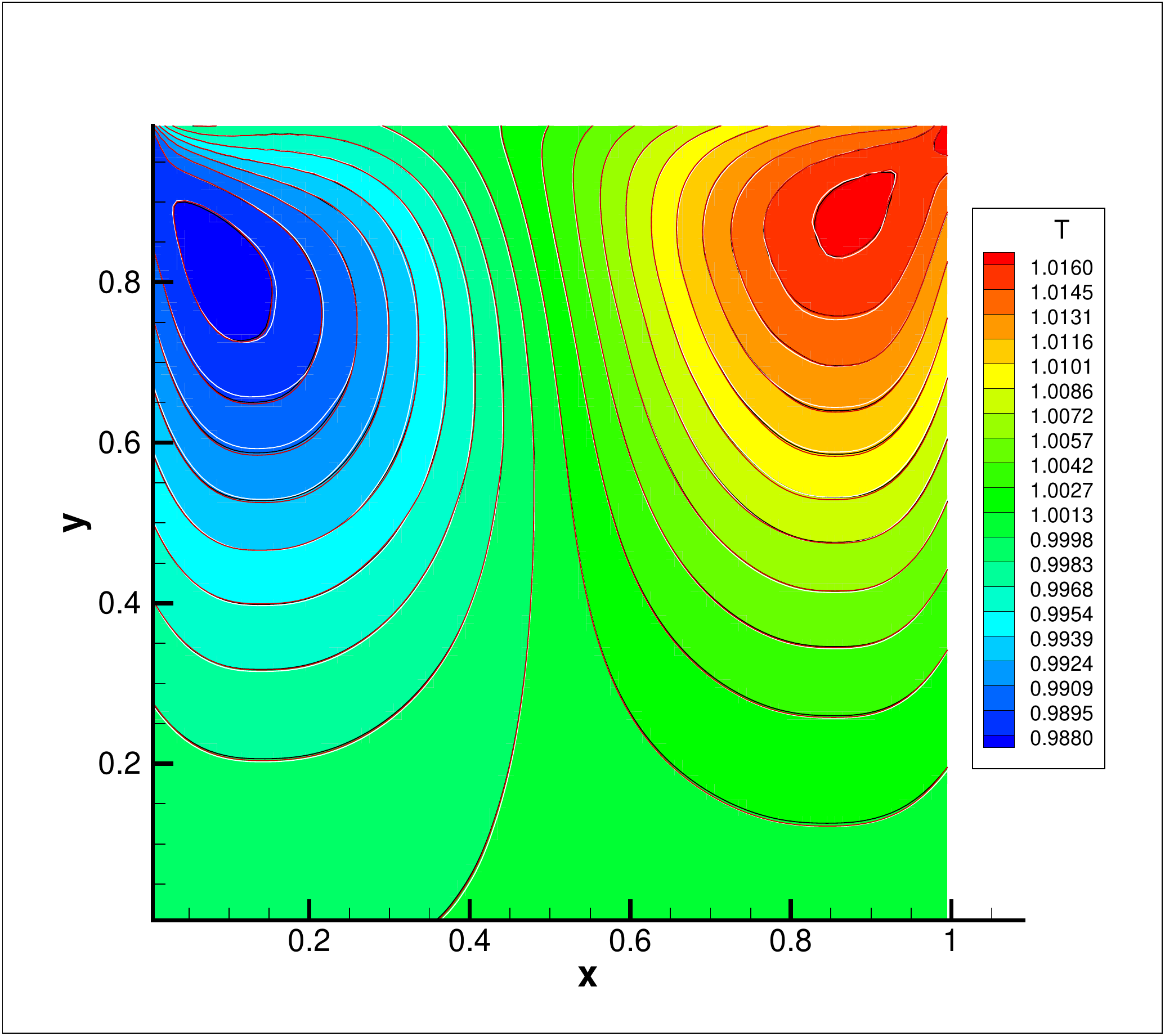}}\hfill
  \subfloat[$\sigma_{12}, t = 0.5$ ]{\includegraphics[bb=18 21 584 500, width=0.33\textwidth,clip]{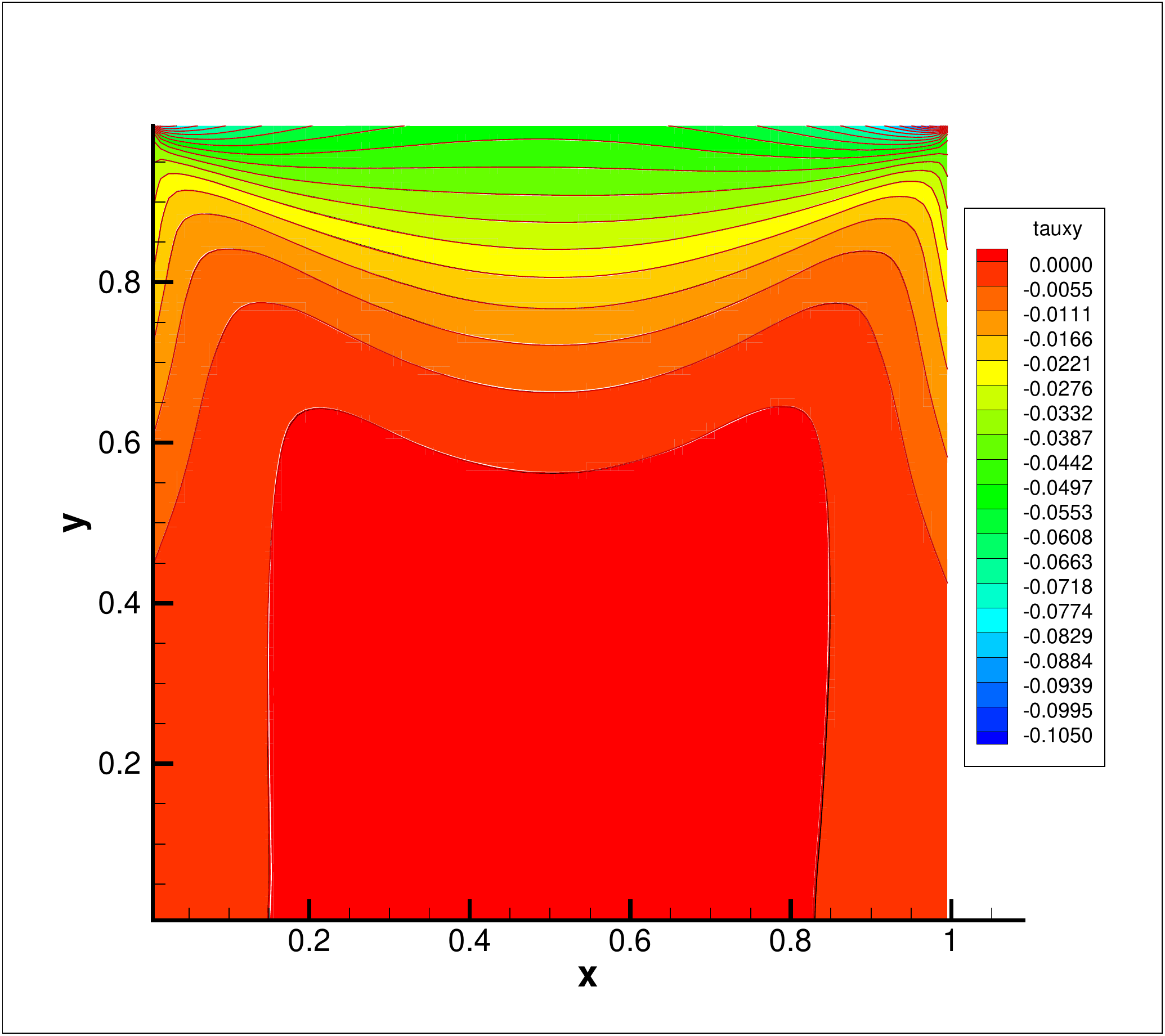}}\hfill \\
  \subfloat[$\rho, t = 1$]{\includegraphics[bb=18 21 584 500, width=0.33\textwidth,clip]{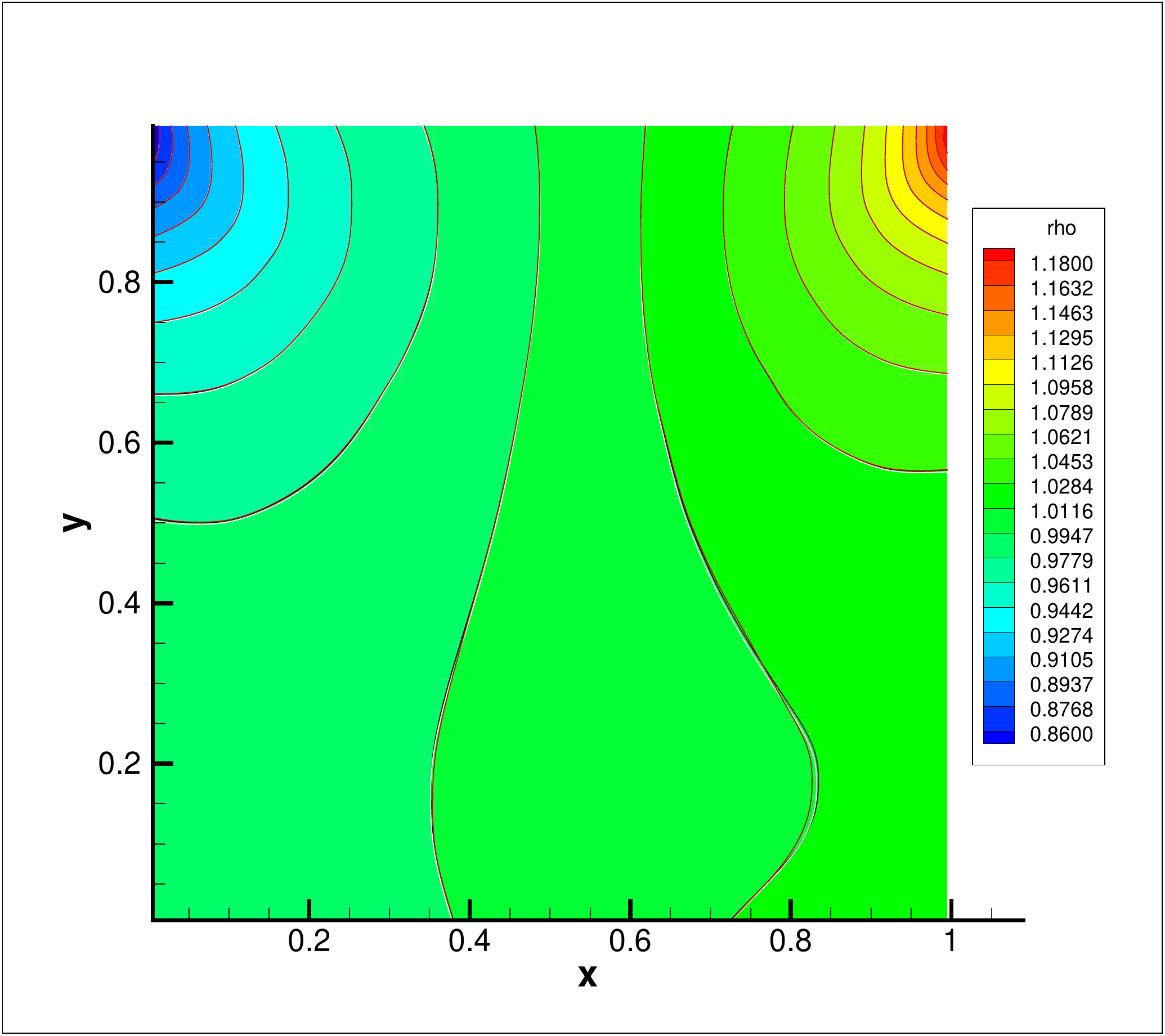}} \hfill
  \subfloat[$\theta, t = 1$]{\includegraphics[bb=18 21 584 500, width=0.33\textwidth,clip]{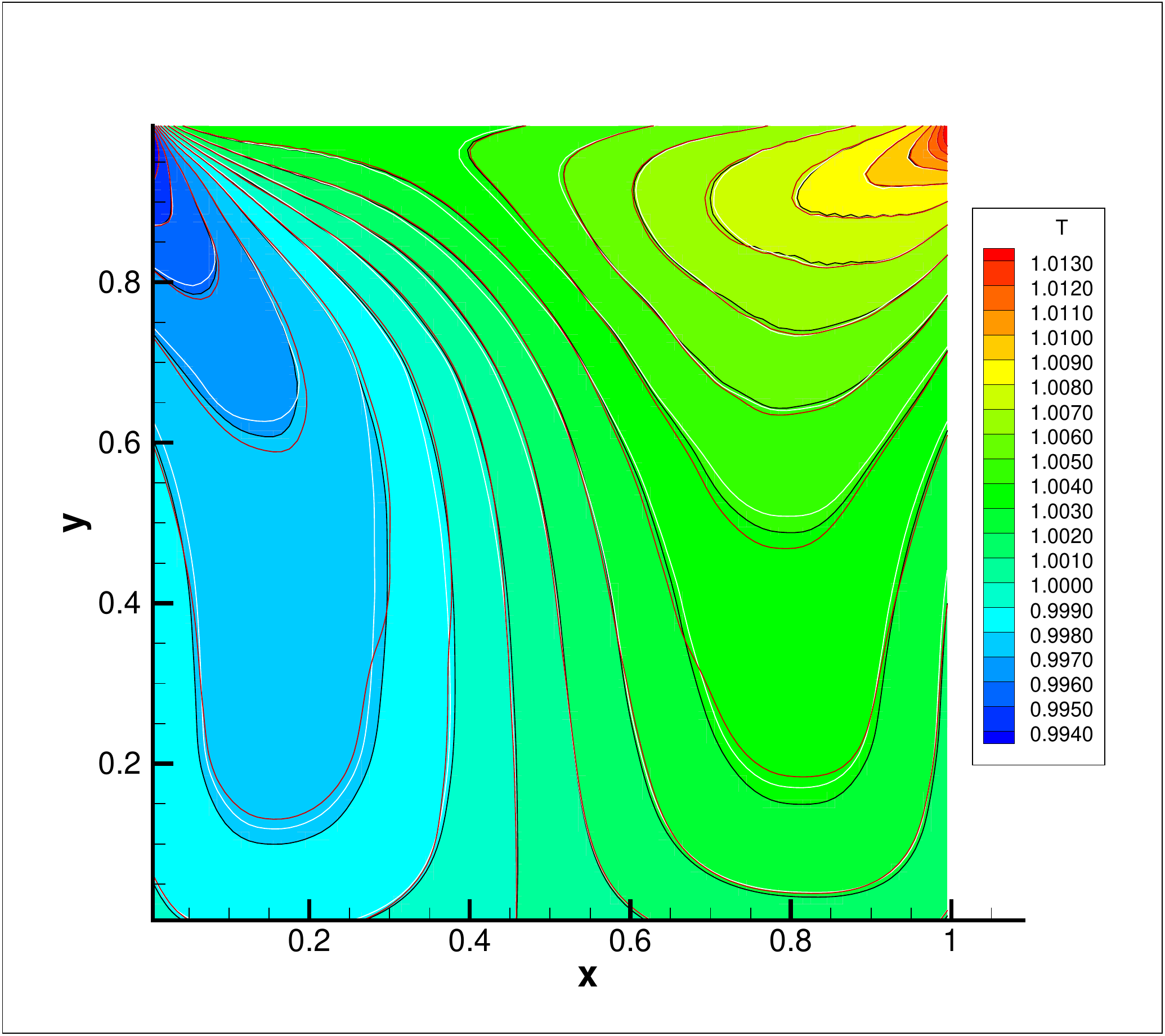}} \hfill
  \subfloat[$\sigma_{12}, t = 1$]{\includegraphics[bb=18 21 584 500, width=0.33\textwidth,clip]{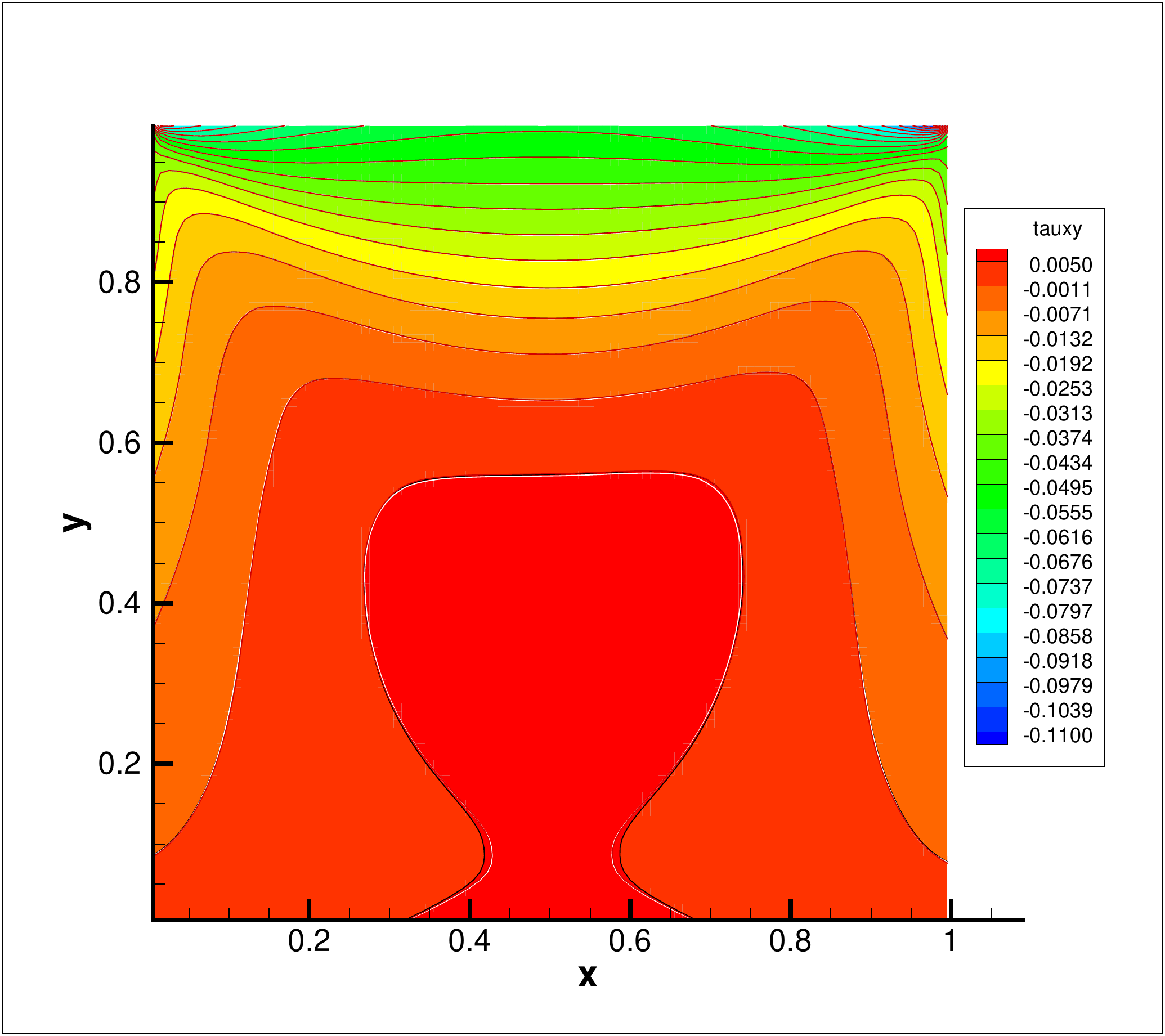}} \hfill \\
  \subfloat[$\rho, t = 5$]{\includegraphics[bb=18 21 584 500,width=0.33\textwidth,clip]{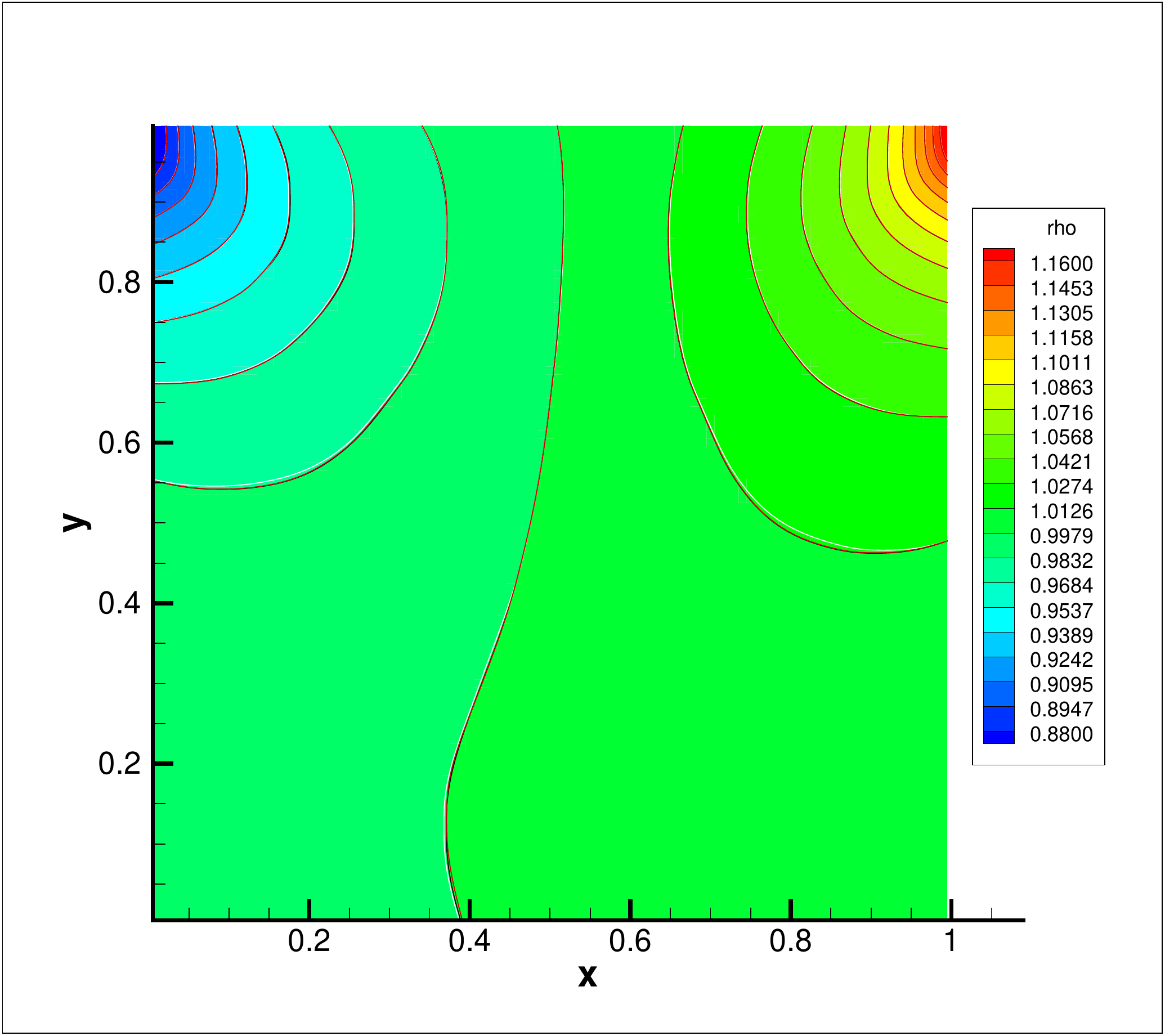}} \hfill
  \subfloat[$\theta, t = 5$]{\includegraphics[bb=18 21 584 500, width=0.33\textwidth,clip]{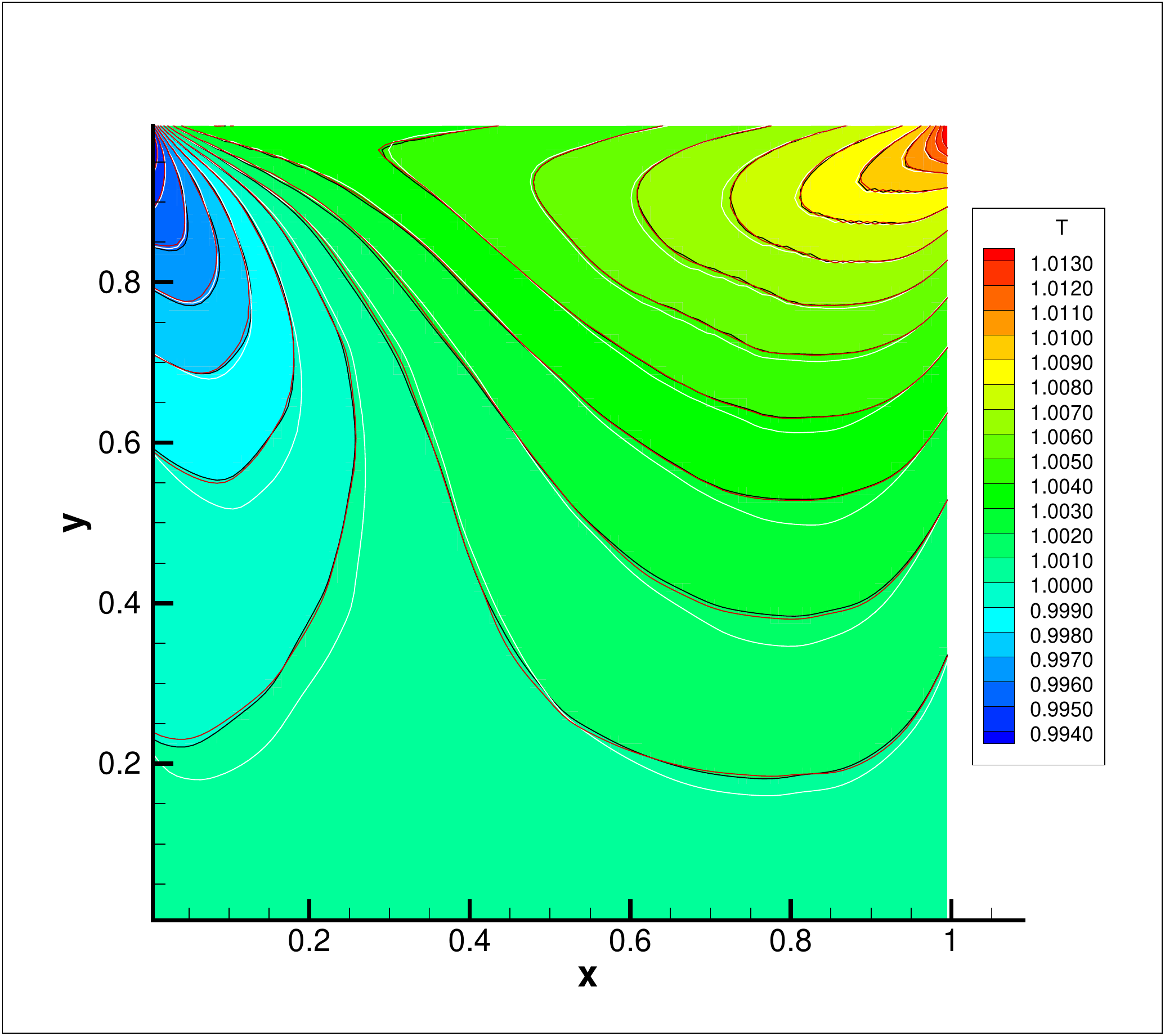}} \hfill
  \subfloat[$\sigma_{12}, t = 5$]{\includegraphics[bb=18 21 584 500, width=0.33\textwidth,clip]{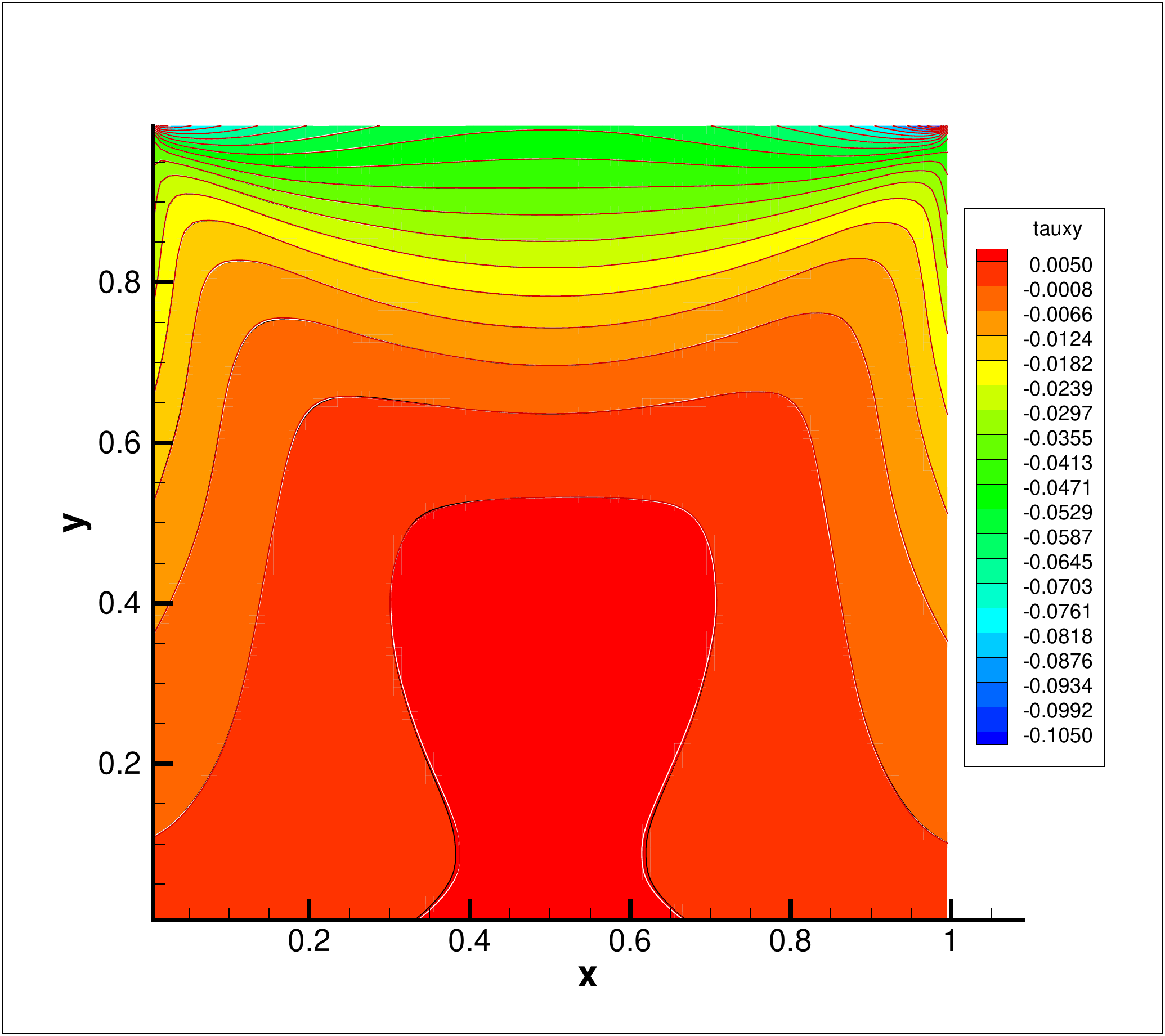}} \\
  \subfloat[$\rho$, steady state]{\includegraphics[bb=18 21 584 500, width=0.33\textwidth,clip]{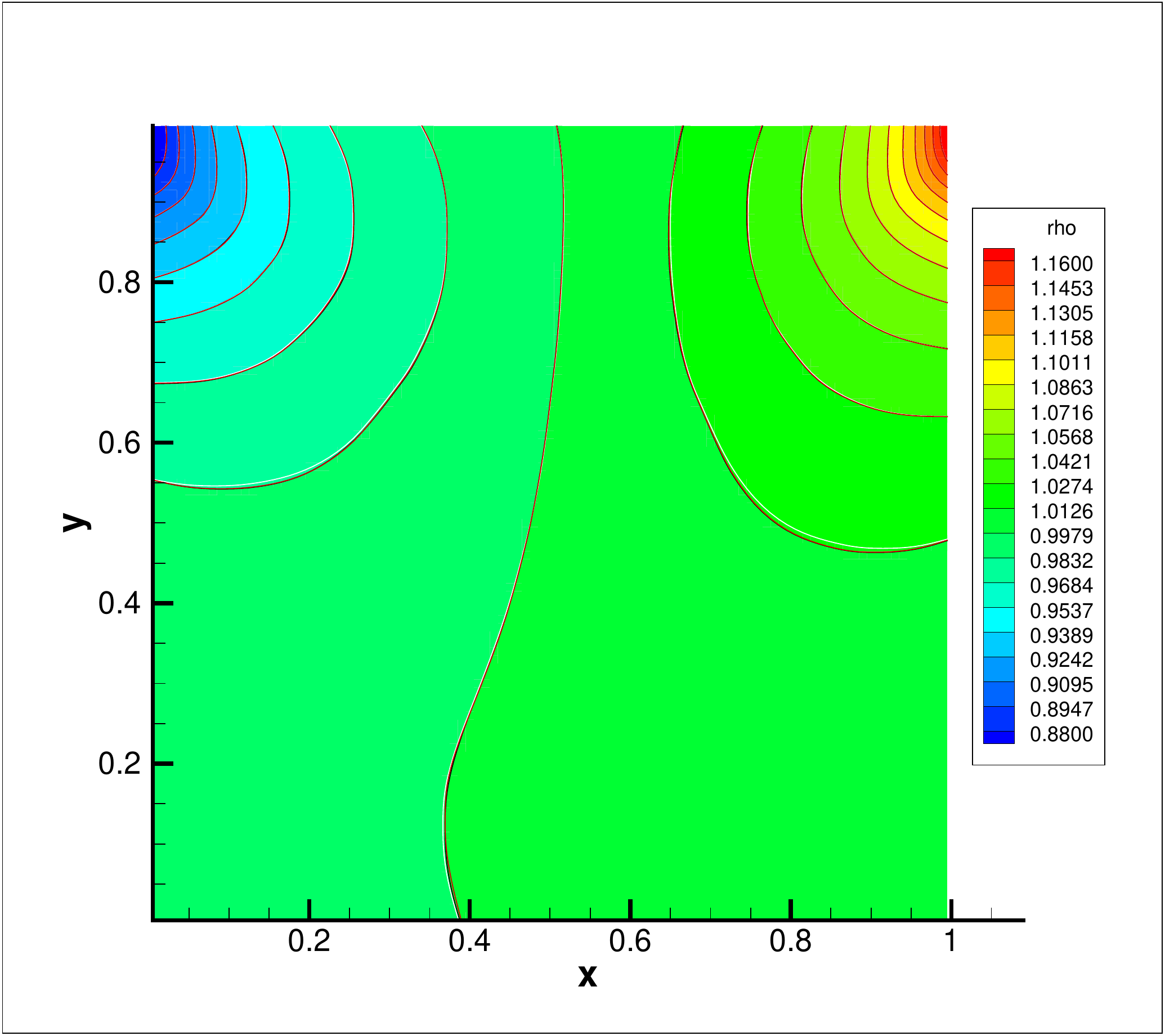}}\hfill
  \subfloat[$\theta$, steady state]{\includegraphics[bb=18 21 584 500, width=0.33\textwidth,clip]{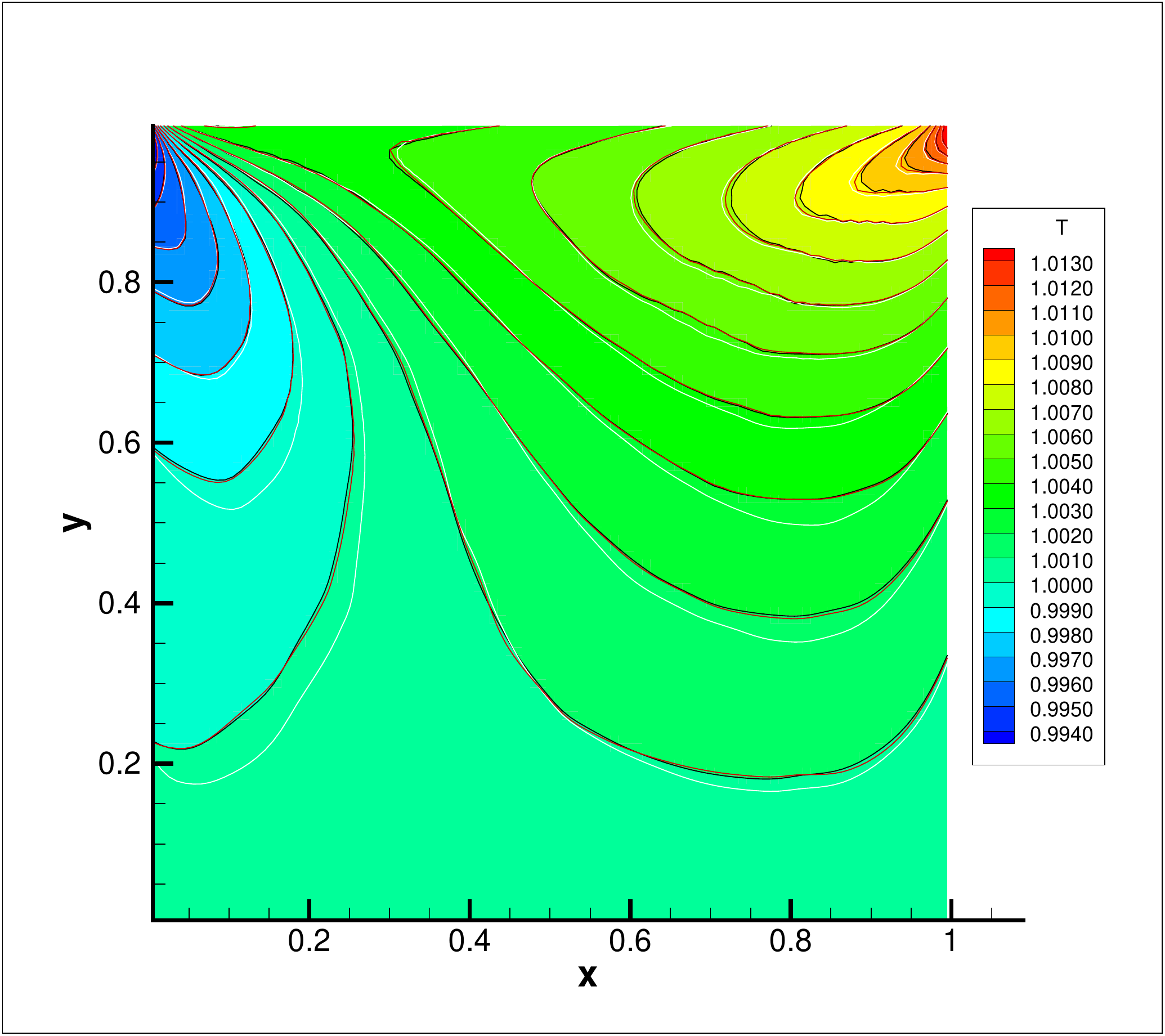}} \hfill
  \subfloat[$\sigma_{12}$, steady state]{\includegraphics[bb=18 21 584 500, width=0.33\textwidth,clip]{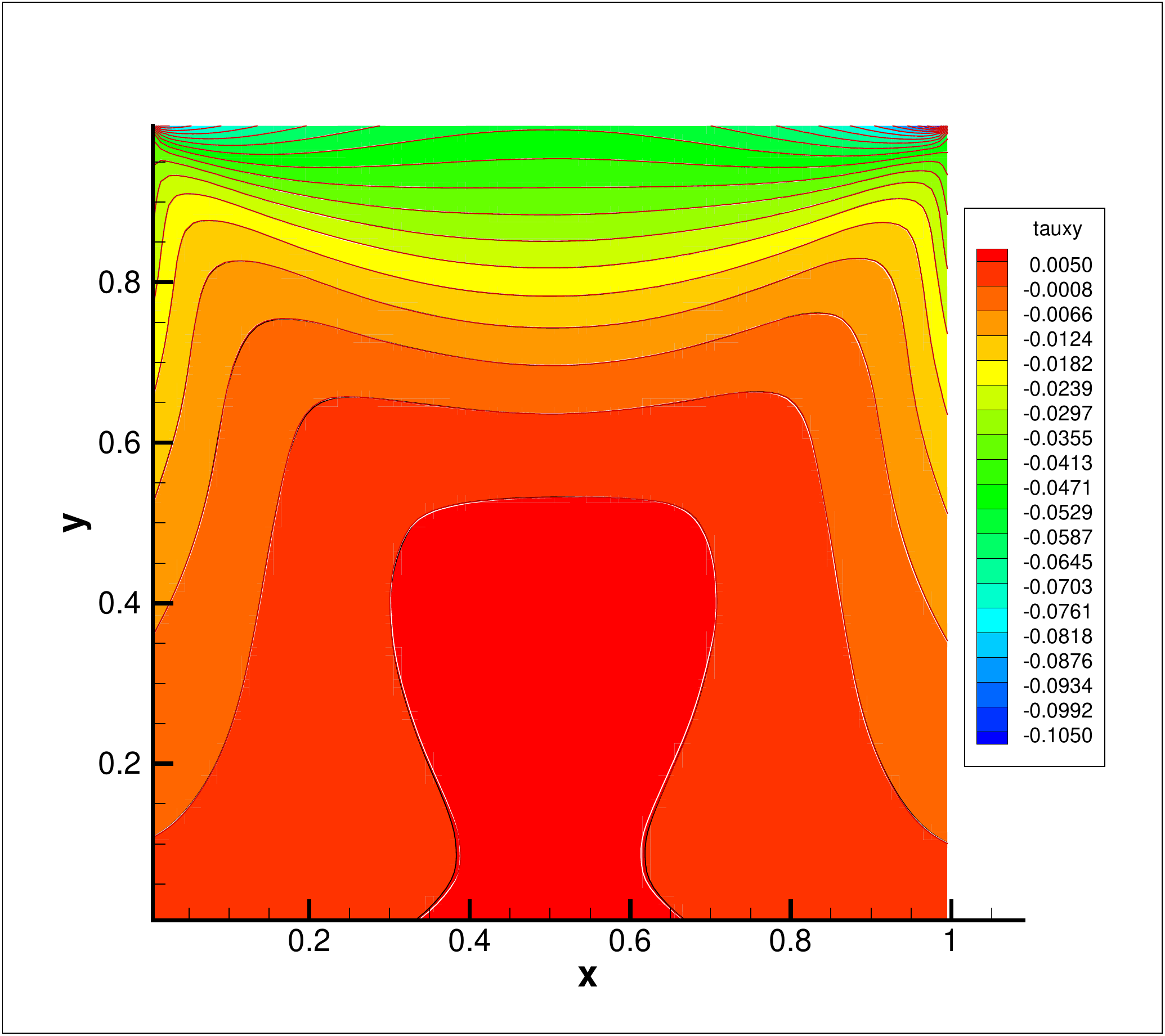}}
   \caption{Solution of the lid-driven cavity flow at different times. The white contours and the red contours are the numerical solutions with threshold parameters $(\epsilon_1^1, \epsilon_2^1)$ and $(\epsilon_1^2, \epsilon_2^2)$, respectively. The black contours are the reference solution.}
  \label{fig:ex3_sol}
\end{figure}

\begin{figure}[!htb]
  \centering
  \hfill \subfloat[$(\epsilon_1^1, \epsilon_2^1), t = 0.5$]{\includegraphics[bb=18 21 584 500,width=0.33\textwidth,clip]{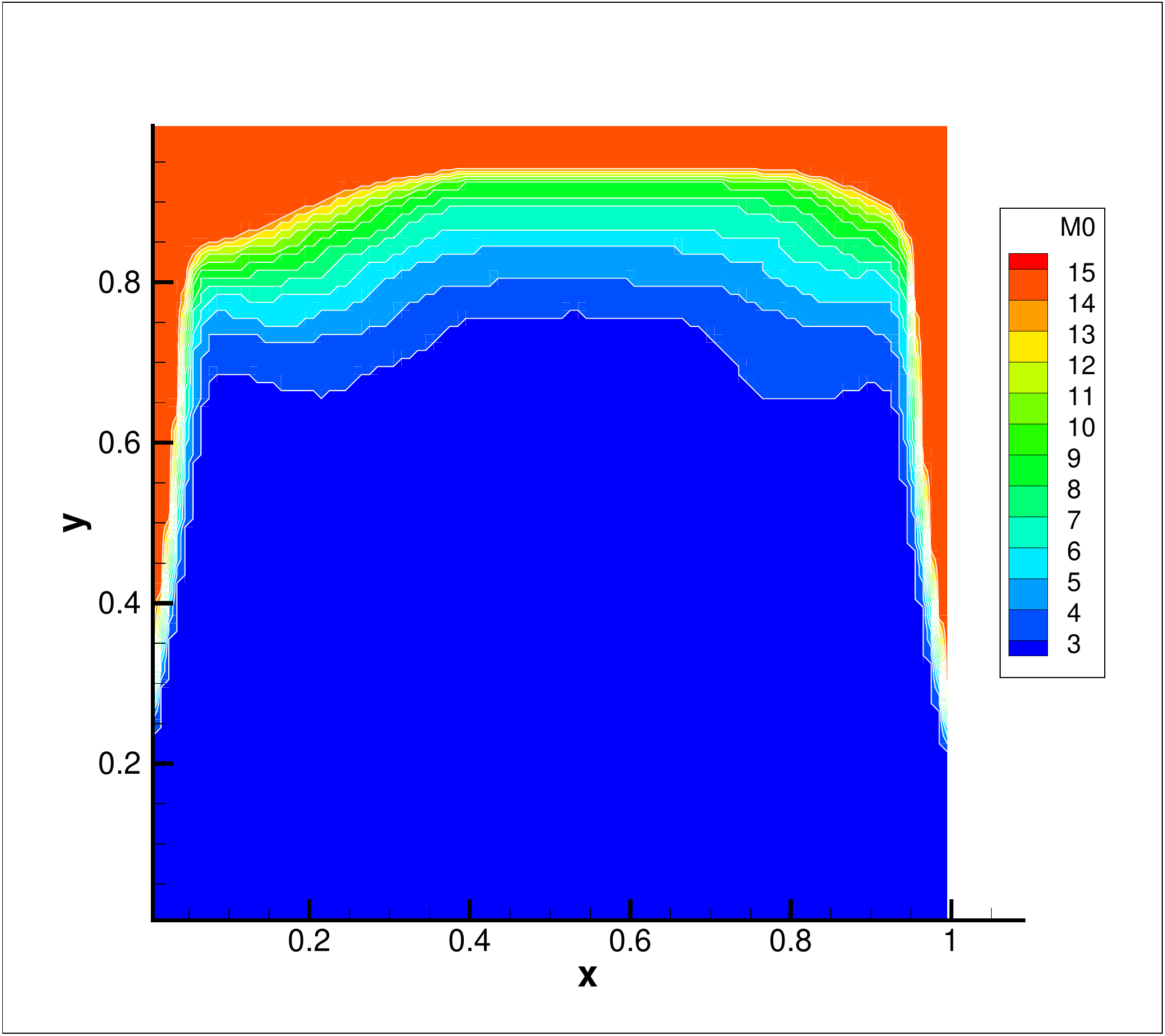}}\hfill
  \subfloat[$(\epsilon_1^2, \epsilon_2^2), t = 0.5$]{\includegraphics[bb=18 21 584 500,width=0.33\textwidth,clip]{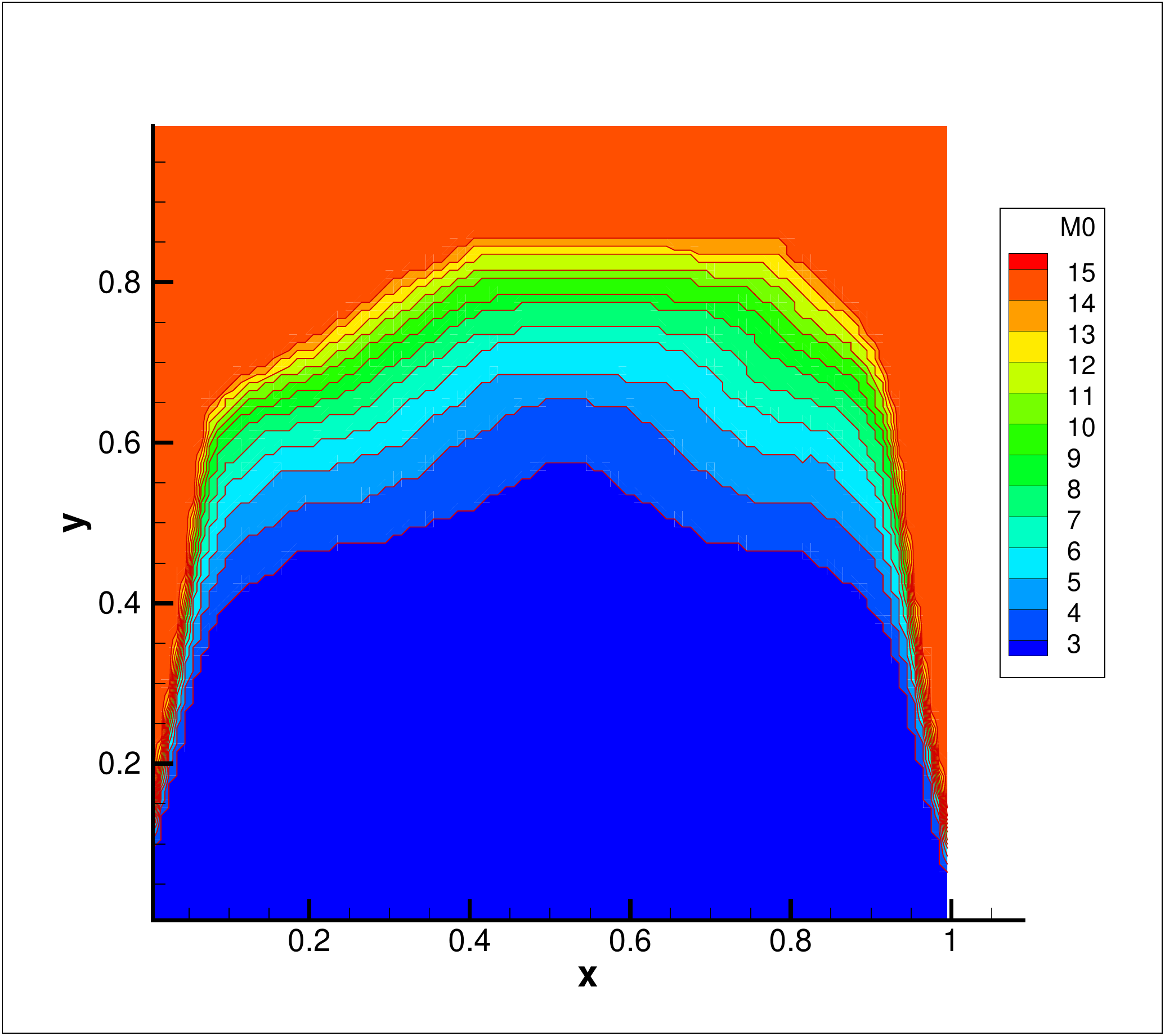}} \hfill \mbox{}\\
  \hfill \subfloat[$(\epsilon_1^1, \epsilon_2^1), t = 1$]{\includegraphics[bb=18 21 584 500,width=0.33\textwidth,clip]{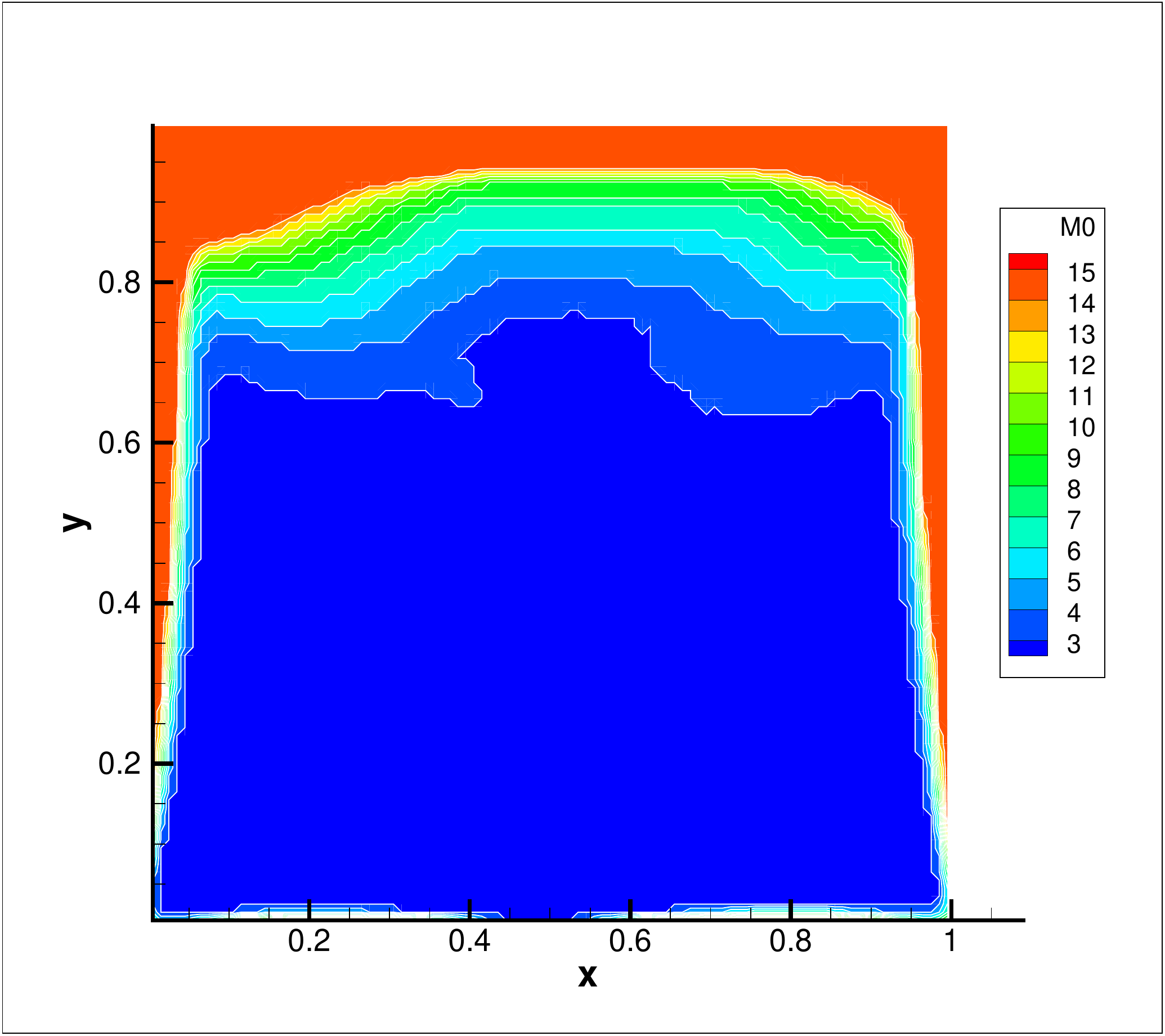}}\hfill
  \subfloat[$(\epsilon_1^2,\epsilon_2^2), t = 1$]{\includegraphics[bb=18 21 584 500,width=0.33\textwidth,clip]{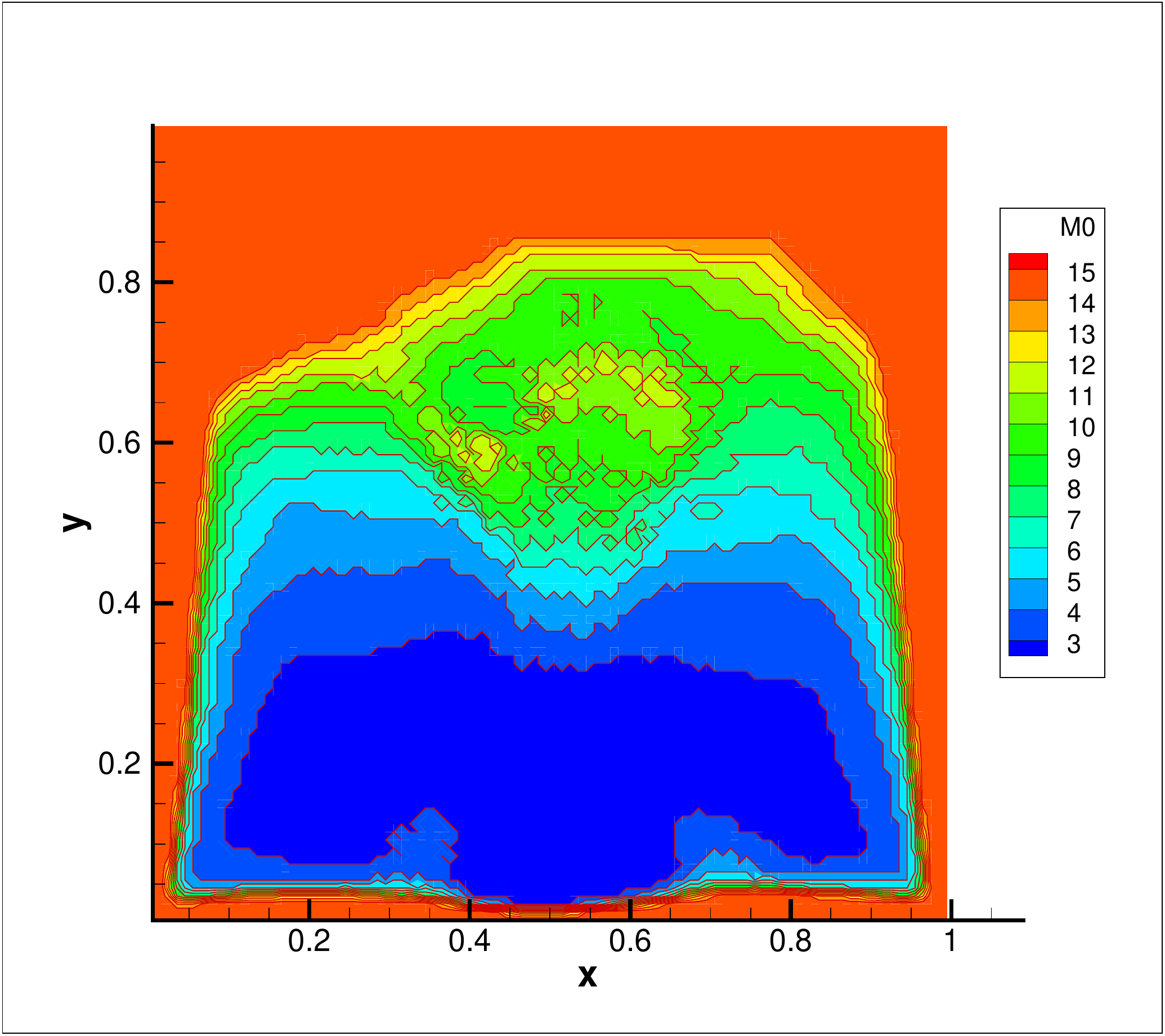}} \hfill \mbox{} \\
  \hfill \subfloat[$(\epsilon_1^1, \epsilon_2^1), t = 5$]{\includegraphics[bb=18 21 584 500,width=0.33\textwidth,clip]{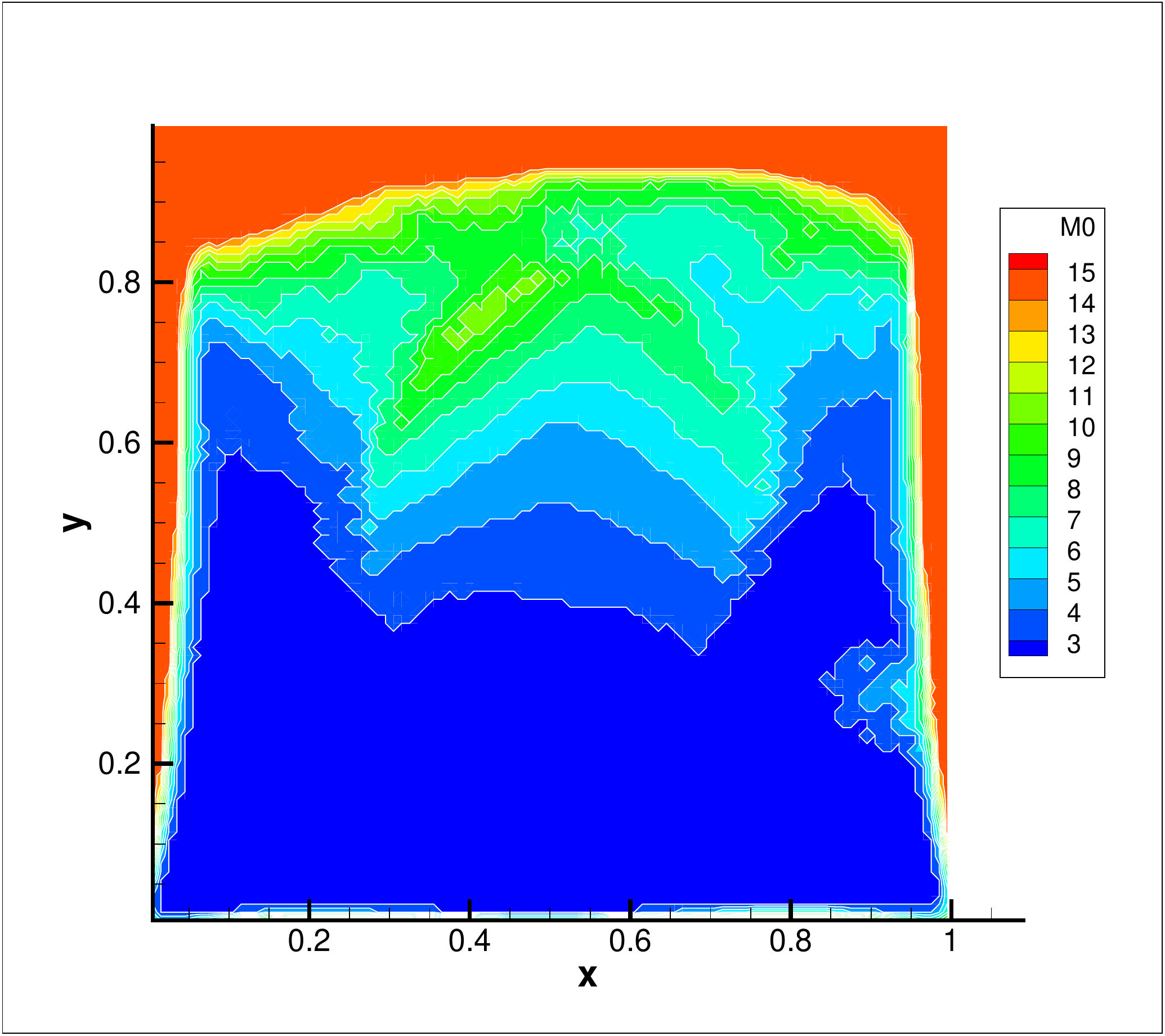}}\hfill
  \subfloat[$(\epsilon_1^2,\epsilon_2^2), t = 5$]{\includegraphics[bb=18 21 584 500,width=0.33\textwidth,clip]{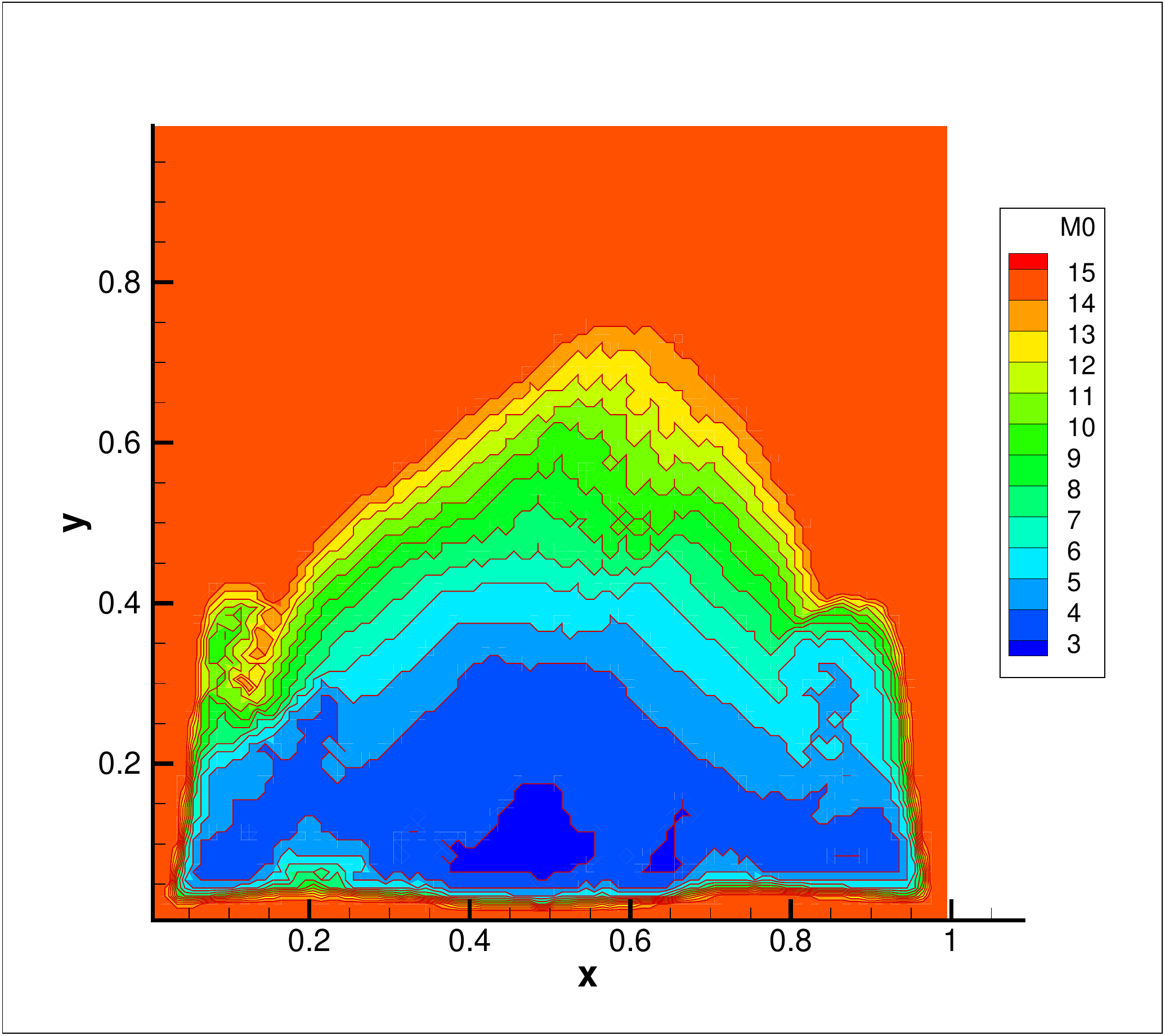}} \hfill \mbox{}\\
  \hfill \subfloat[$(\epsilon_1^1, \epsilon_2^1)$, steady state]{\includegraphics[bb=18 21 584 500,width=0.33\textwidth,clip]{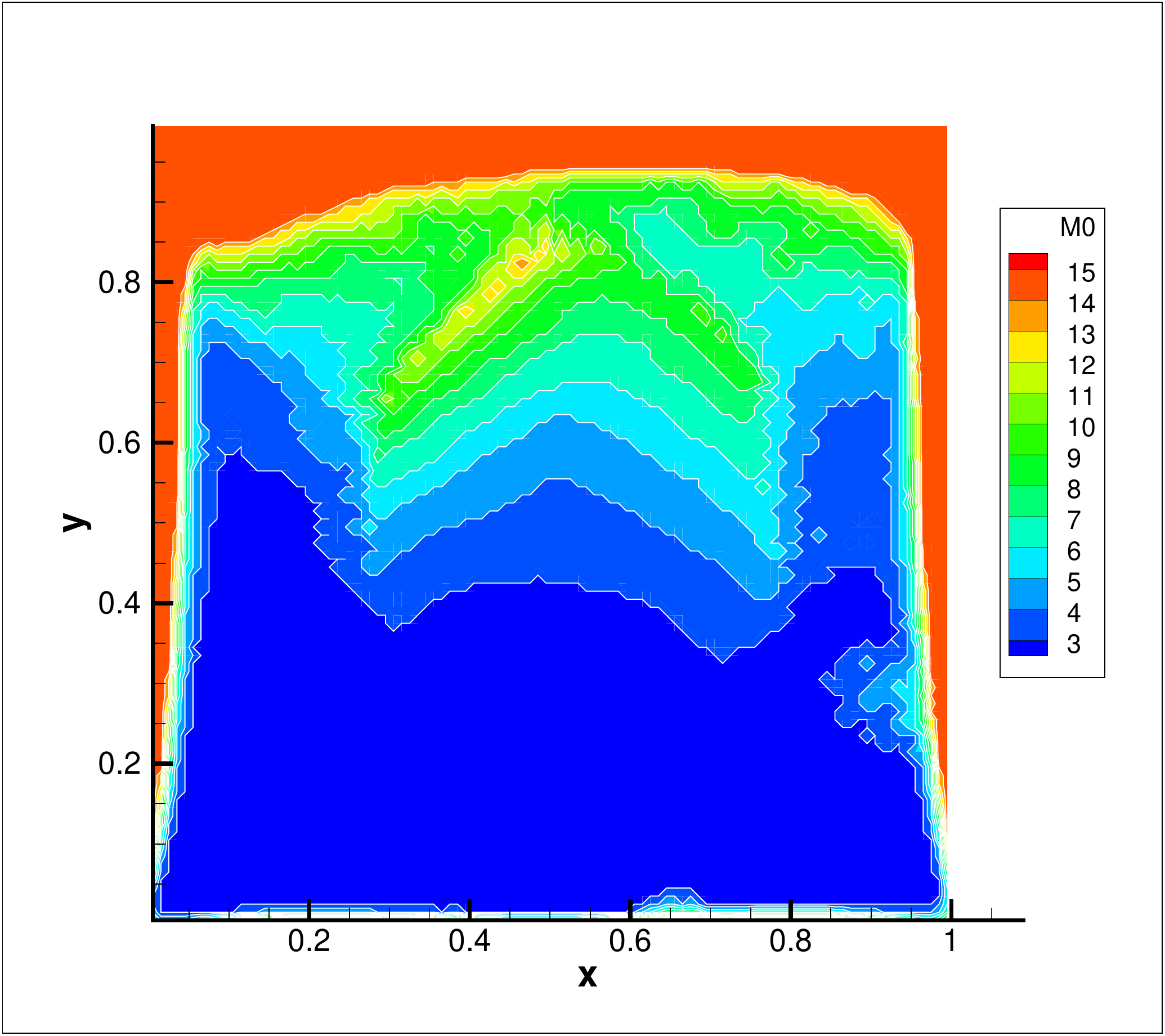}}\hfill
  \subfloat[$(\epsilon_1^2,\epsilon_2^2)$, steady state]{\includegraphics[bb=18 21 584 500,width=0.33\textwidth,clip]{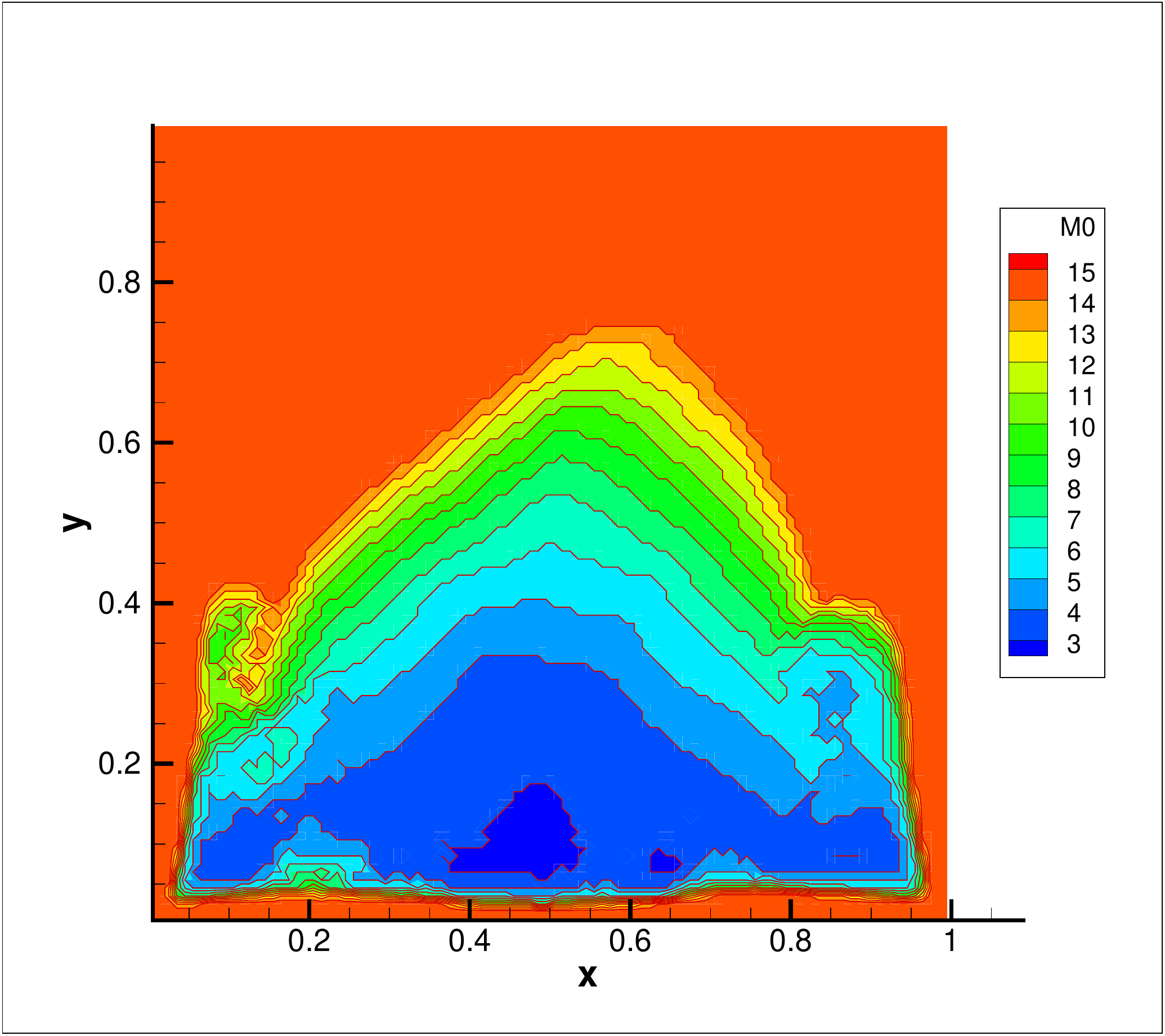}} \hfill \mbox{}
 \caption{The distribution of $M_0$ for the lid-driven cavity flow at different times. } \label{fig:ex3_M0}
\end{figure}

\subsection{Discussion on the choice of parameters}
In the previous numerical examples, one can observe that the choice of the thresholds $\epsilon_1$ and $\epsilon_2$ appears to be quite problem-dependent. This is mainly due to the different requirements of the numerical accuracy in different problems. Generally speaking, for flows with larger fluctuations such as Section \ref{sec:diffusion}, we tend to choose a larger pair of parameters since the small error is less noticeable, while for the lid-driven cavity flow in Section \ref{sec:cavity_flow}, the parameters are chosen smaller since the contour lines are more sensitive to the numerical error. In practice, when the flow structure is complicated, the flows in different areas may have different features, which may require different thresholds to obtain proper relative errors. To achieve this, a straightforward method is to add the spatial variable $x$ to both thresholds. Then the method in Section \ref{sec:adaptive} can still be applied to determine $\epsilon_1(x)$ and $\epsilon_2(x)$. Further study of this approach will be considered in our future work.


\section{Summary and outlook} \label{sec:conclusion}

This paper contributes to the efficient simulation of the Boltzmann equation with the quadratic collision operator. Instead of a full discretization of the binary collision term, we choose to replace part of it with the BGK simulation, and the choice of ``BGK part'' changes with the distribution function. To make proper choices adaptively, we construct our error indicator based on a novel idea that uses a cheaper linear operator to control some quadratic parts of the error term, so that even in the case where the full binary collision operator has to be used widely, our adaptive method does not slow down the computation. Our numerical simulation shows the affordability and reliability of our indicators.

The error indicator introduced in this paper is specially designed for the Burnett spectral method, while we expect that the same idea can be applied to other approaches such as the Fourier spectral method, which has lower time complexity. As the Fourier spectral method is also much cheaper for certain particular models \cite{Filbet2006}, we are exploring the possibilities of such extensions.
\appendix

\section{Choice of the parameter $\nu_{M_0}$} \label{sec:nu}
The parameter $\nu_{M_0}$ in the approximate collision term \eqref{eq:approx_coll} is chosen following \cite{Cai2015,Wang2019,Cai2020}. It can be obtained by the following steps:
\begin{itemize}
\item Set $\overline{\bu}$ and $\overline{\theta}$ to be the velocity $\bu$ and temperature $\theta$, respectively, so that
\begin{displaymath}
\mathbf{M} = \begin{pmatrix}
  \mathbf{M}^{(1)} \\ \mathbf{M}^{(2)}
\end{pmatrix} = (\rho, 0, \cdots, 0)^T.
\end{displaymath}
\item Define the linearized collision operator
\begin{displaymath}
\mathbf{L}(\mathbf{f}^{(1)}) = \mathbf{Q}_{M_0} : (\mathbf{f}^{(1)} \otimes \mathbf{M}^{(1)}).
\end{displaymath}
Since $\mathbf{M}^{(1)}$ denotes an isotropic distribution function, the operator can be expressed by
\begin{displaymath}
g_{lmn} = \sum_{n'=0}^{\lfloor (M_0-l)/2 \rfloor} a_{lnn'}^0 f_{lmn'}, \quad l = 0,1,\cdots,M_0, \quad m = -l,\cdots,l, \quad n = 0,\cdots,\lfloor (M_0-l)/2 \rfloor
\end{displaymath}
where $f_{lmn'}$ are the components of $\mathbf{f}^{(1)}$ and $g_{lmn}$ are the components of $\mathbf{L}(\mathbf{f}^{(1)})$.
\item Set $\nu_{M_0}$ to be the spectral radius of $\mathbf{L}$, which can be computed via
\begin{displaymath}
\nu_{M_0} = \max_{l=0,1,\cdots,M_0} \max \{|\lambda|: \lambda \text{ is the eigenvalue of the matrix $\mathbf{A}_l = (a_{lnn'}^0)$}\}.
\end{displaymath}
The coefficients $a_{lnn'}^0$ are given in \eqref{eq:a_lnn1^n'}, and the matrix eigenvalues are numerically computed.
\end{itemize}
When $M_0 = 2$, the matrix $\mathbf{A}_0$ is a $2\times 2$ matrix, and $\mathbf{A}_1$ and $\mathbf{A}_2$ are scalars. Due to the conservation of mass, momentum and energy, we have $\mathbf{A}_0 = 0$, $\mathbf{A}_1 = 0$. Thus, the absolute value of the only coefficient $a_{200}^0$ in $\mathbf{A}_2$ provides the value of $\nu_{M_0}$. This coefficient indicates the decay rate of the stress tensor, which is often used as the collision frequency in the BGK model.

\section{Proof of Theorem \ref{thm:pp}} \label{sec:proof}
\begin{lemma}
Given non-negative indices $l,n,n',l_1,n_1,l_2,n_2$ and integer indices $m_1 \in [-l_1, l_1]$ and $m_2 \in [-l_2, l_2]$, the integral
\begin{displaymath}
\int_{\mathbb{R}^3} \int_{\mathbb{R}^3} p_{l_1 m_1 n_1}^{\dagger}(\sqrt{2}\bh) p_{l_2 m_2 n_2}^{0\dagger} \left( \frac{\bg}{\sqrt{2}} \right) p_{l0n} \left( \bh + \frac{1}{2} \bg \right) p_{00n'} \left( \bh - \frac{1}{2} \bg \right) \omega\left(\bh + \frac{1}{2} \bg\right) \omega \left( \bh - \frac{1}{2} \bg \right) \,\mathrm{d}\bg \,\mathrm{d}\bh
\end{displaymath}
is nonzero only if $l_1 + l_2 + 2(n_1 + n_2) = l + 2(n+n')$ and $m_1 + m_2 = 0$.
\end{lemma}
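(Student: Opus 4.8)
The plan is to recognize the integral as a Gaussian-weighted moment and then to read off the two constraints from a degree count and from rotational invariance about the polar axis. Throughout I take $\bar{\bu}=\boldsymbol{0}$ and $\bar{\theta}=1$ (as we may), so that each $p_{lmn}=p_{lmn}^0$ is a polynomial of degree exactly $l+2n$, $\omega$ is isotropic, and $\{p_{lmn}:l+2n\le d\}$ is a basis of the polynomials of degree $\le d$ on $\mathbb{R}^3$ — these polynomials are mutually orthonormal for the Gaussian weight appearing in the integrand (this is \eqref{eq:orth} in disguise), and the count of admissible indices $(l,m,n)$ matches the dimension of that polynomial space. First I would change variables from $(\bh,\bg)$ to $(\bv,\bv_*)=(\bh+\tfrac12\bg,\ \bh-\tfrac12\bg)$, whose Jacobian is $1$; then $\sqrt{2}\,\bh=(\bv+\bv_*)/\sqrt{2}$, $\bg/\sqrt{2}=(\bv-\bv_*)/\sqrt{2}$, and comparing quadratic exponents shows the weight factorizes, $\omega(\bh+\tfrac12\bg)\,\omega(\bh-\tfrac12\bg)=\omega(\sqrt{2}\,\bh)\,\omega(\bg/\sqrt{2})=\omega(\bv)\,\omega(\bv_*)$. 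The integral then reads
\begin{displaymath}
I=\int_{\mathbb{R}^3}\!\int_{\mathbb{R}^3}p_{l_1m_1n_1}^{\dagger}\!\Big(\tfrac{\bv+\bv_*}{\sqrt{2}}\Big)\,p_{l_2m_2n_2}^{0\dagger}\!\Big(\tfrac{\bv-\bv_*}{\sqrt{2}}\Big)\,p_{l0n}(\bv)\,p_{00n'}(\bv_*)\,\omega(\bv)\,\omega(\bv_*)\,\mathrm{d}\bv\,\mathrm{d}\bv_*,
\end{displaymath}
which I may also keep in the $(\sqrt{2}\,\bh,\ \bg/\sqrt{2})$ variables, and in which pairing a polynomial against $p_{abc}^{\dagger}$ with this weight extracts the $(a,b,c)$-coefficient of its expansion in $\{p_{lmn}\}$.

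For the degree identity I would establish the two inequalities separately. $(\le)$: In the $(\sqrt{2}\,\bh,\ \bg/\sqrt{2})$ form, $p_{l0n}(\bh+\tfrac12\bg)\,p_{00n'}(\bh-\tfrac12\bg)$ is a polynomial in $(\bh,\bg)\in\mathbb{R}^6$ of total degree $d:=(l+2n)+2n'$, hence a finite combination of products $p_{a}(\sqrt{2}\,\bh)\,p_{b}^0(\bg/\sqrt{2})$ with $\deg a+\deg b\le d$; because the weight is a function of $\bh$ times a function of $\bg$, pairing against $p_{l_1m_1n_1}^{\dagger}(\sqrt{2}\,\bh)\,p_{l_2m_2n_2}^{0\dagger}(\bg/\sqrt{2})$ and using orthonormality in each slot forces $I=0$ unless $(l_1+2n_1)+(l_2+2n_2)\le d$. $(\ge)$: Symmetrically, in the $(\bv,\bv_*)$ form the test factor $p_{l_1m_1n_1}^{\dagger}\big(\tfrac{\bv+\bv_*}{\sqrt{2}}\big)\,p_{l_2m_2n_2}^{0\dagger}\big(\tfrac{\bv-\bv_*}{\sqrt{2}}\big)$ is a polynomial in $(\bv,\bv_*)$ of total degree $d':=(l_1+2n_1)+(l_2+2n_2)$, so it expands in products $p_{\cdot}(\bv)\,p_{\cdot}(\bv_*)$ with combined index $\le d'$; integrating against $p_{l0n}(\bv)\,p_{00n'}(\bv_*)\,\omega(\bv)\,\omega(\bv_*)$ reads off the coefficient of $p_{l0n}(\bv)\,p_{00n'}(\bv_*)$, which is absent unless $(l+2n)+2n'\le d'$. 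The two bounds combine to $l_1+l_2+2(n_1+n_2)=l+2(n+n')$.

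For the condition $m_1+m_2=0$ I would use rotational invariance about the $v_3$-axis. Let $\boldsymbol{R}_\phi$ be the rotation by $\phi$ about that axis and substitute $\bv\mapsto\boldsymbol{R}_\phi\bv$, $\bv_*\mapsto\boldsymbol{R}_\phi\bv_*$ in $I$: this is a measure-preserving bijection of $\mathbb{R}^3\times\mathbb{R}^3$, it leaves $\omega(\bv)\,\omega(\bv_*)$ invariant (since $\bar{\bu}=\boldsymbol{0}$) and leaves $p_{l0n}(\bv)$ and $p_{00n'}(\bv_*)$ invariant (their radial parts depend only on $|\cdot|$, and $Y_l^0$, $Y_0^0$ are fixed by polar rotations), while, using that $Y_l^m$ acquires the factor $e^{\mathrm{i}m\phi}$ under such a rotation and that $\boldsymbol{R}_\phi\big(\tfrac{\bv\pm\bv_*}{\sqrt{2}}\big)=\tfrac{\boldsymbol{R}_\phi\bv\pm\boldsymbol{R}_\phi\bv_*}{\sqrt{2}}$, the first test factor (through its $\dagger$) is multiplied by $e^{-\mathrm{i}m_1\phi}$ and the second by $e^{-\mathrm{i}m_2\phi}$. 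Hence $I=e^{-\mathrm{i}(m_1+m_2)\phi}I$ for every $\phi$, so $I=0$ unless $m_1+m_2=0$.

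The step I expect to be the main obstacle is the bookkeeping in the degree argument: one must be careful that both expansions run only over products with the claimed combined-index bound, which rests on the triangular degree structure of the Burnett basis (each $p_{lmn}$ has nonzero leading part of degree $l+2n$, so $\{p_{lmn}\}_{l+2n\le d}$ spans the degree-$\le d$ polynomials on $\mathbb{R}^3$, and the corresponding products span the degree-$\le d$ polynomials on $\mathbb{R}^6$). Once this, the weight factorization, and the orthonormality are in place, both inequalities are immediate and the parity argument is routine.
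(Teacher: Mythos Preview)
Your argument is correct. The paper itself does not supply a proof of this lemma; it merely records that the selection rules can be found in Kumar's classical work on Burnett polynomials and Talmi coefficients, where they emerge from the explicit Clebsch--Gordan/Talmi machinery for the orthogonal change of coordinates $(\bv,\bv_*)\leftrightarrow(\sqrt{2}\,\bh,\bg/\sqrt{2})$. Your route is more elementary and fully self-contained: the degree equality comes from a two-sided degree count---expanding once in the $(\sqrt{2}\,\bh,\bg/\sqrt{2})$ basis and once in the $(\bv,\bv_*)$ basis---using only that $\{p_{lmn}:l+2n\le d\}$ spans the polynomials of degree $\le d$ and that the Gaussian weight factorizes under this orthogonal change of variables; the constraint $m_1+m_2=0$ drops out of the azimuthal equivariance $p_{lmn}(\boldsymbol{R}_\phi\,\cdot)=e^{\mathrm{i}m\phi}p_{lmn}$ together with the reality of $p_{l0n}$ and $p_{00n'}$. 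What Kumar's formalism buys in addition is the explicit value of $A_{lnn'}^{l_1l_2m_2n_2}$ as a product of coupling coefficients, but since the lemma only concerns the \emph{support} of these coefficients, your argument suffices and avoids the heavier machinery.
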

This conclusion can be found in \cite[eqs. (112)(114)]{kumar1966}.
\begin{proof}[Proof of Theorem \ref{thm:pp}]
Due to the orthogonality of the polynomials $p_{lmn}$, we know that
\begin{equation} \label{eq:A_integral}
\begin{split}
A_{lnn'}^{l_1 l_2 m_2 n_2} = \int_{\mathbb{R}^3} \int_{\mathbb{R}^3} p_{l_1 m_1 n_1}^{\dagger}(\sqrt{2}\bh) p_{l_2 m_2 n_2}^{0\dagger} \left( \frac{\bg}{\sqrt{2}} \right) p_{l0n} \left( \bh + \frac{1}{2} \bg \right) p_{00n'} \left( \bh - \frac{1}{2} \bg \right)  \\
\times \omega\left(\bh + \frac{1}{2} \bg\right) \omega \left( \bh - \frac{1}{2} \bg \right) \,\mathrm{d}\bg \,\mathrm{d}\bh,
\end{split}
\end{equation}
where $m_1 = -m_2$ and $n_1 = l-l_1-l_2+2(n+n'-n_2)$.
For simplicity, we assume that $\overline{\bu} = 0$, and in the case of nonzero $\overline{\bu}$, the result can be obtained by translation. To derive the recurrence relation of $A_{lnn'}^{l_1l_2 m_2 n_2}$, we use the recurrence relation of Laguerre polynomials to get
\begin{displaymath}
p_{00,n'+1}\left(\bh - \frac{1}{2}\bg \right) = -\frac{1}{2\sqrt{(n+1)(n+3/2)}} \overline{\theta}^{-1} \left\| \bh - \frac{1}{2} \bg \right\|^2 p_{00n'} \left( \bh - \frac{1}{2} \bg \right) + (\text{Lower degree polynomials}),
\end{displaymath}
where ``lower degree polynomials'' refers to the polynomials of $\bg$ and $\bh$ of degree less than $2(n'+1)$. Due to the orthogonality of $p_{lmn}$, these terms will vanish when calculating the integral \eqref{eq:A_integral} with $n'$ replaced by $n'+1$:
\begin{equation} \label{eq:Alnnp1}
\begin{split}
A_{ln,n'+1}^{l_1 l_2 m_2 n_2} &= -\frac{1}{2\sqrt{(n+1)(n+3/2)}} \overline{\theta}^{-1} \int_{\mathbb{R}^3} \int_{\mathbb{R}^3} \left( h^2 + \frac{1}{4} g^2 - \bh \cdot \bg \right) p_{l_1 m_1 n_1}^{\dagger}(\sqrt{2}\bh) p_{l_2 m_2 n_2}^{0\dagger} \left( \frac{\bg}{\sqrt{2}} \right) \\
& \qquad \times p_{l0n} \left( \bh + \frac{1}{2} \bg \right) p_{00n'} \left( \bh - \frac{1}{2} \bg \right)
\omega\left(\bh + \frac{1}{2} \bg\right) \omega \left( \bh - \frac{1}{2} \bg \right) \,\mathrm{d}\bg \,\mathrm{d}\bh.
\end{split}
\end{equation}
Using
\begin{equation} \label{eq:h2_g2}
\begin{split}
& \overline{\theta}^{-1} \left( h^2 + \frac{1}{4} g^2 \right) p_{l_1 m_1 n_1}(\sqrt{2} \bh) p_{l_2 m_2 n_2}^0 \left( \frac{\bg}{\sqrt{2}} \right) \\
={} & -\sum_{k=1}^2 \sqrt{n_k(n_k+l_k+1/2)} p_{l_1,m_1,n_1-\delta_{1k}} (\sqrt{2}\bh) p_{l_2,m_2,n_2-\delta_{2k}}^0 \left( \frac{\bg}{\sqrt{2}} \right) + (\text{Higher degree polynomials})
\end{split}
\end{equation}
and
\begin{equation} \label{eq:hg}
\begin{split}
& \frac{1}{2} \overline{\theta}^{-1} (\bh \cdot \bg) p_{l_1 m_1 n_1}(\sqrt{2} \bh) p_{l_2 m_2 n_2}^0 \left( \frac{\bg}{\sqrt{2}} \right) \\
={} & \sum_{\mu=-1}^1 (-1)^{\mu} \left[ \sqrt{l_1 + n_1 + 1/2} \gamma_{l_1,m_1-\mu}^{\mu} p_{l_1-1,m_1-\mu,n_1}(\sqrt{2}\bh) - (-1)^{\mu} \sqrt{n_1} \gamma_{-l_1-1,m_1-\mu}^{\mu} p_{l_1+1,m_1-\mu,n_1-1}(\sqrt{2}\bh) \right] \\
& \qquad \times \left[ \sqrt{l_2 + n_2 + 1/2} \gamma_{l_2,m_2+\mu}^{-\mu} p_{l_2-1,m_2+\mu,n_2}(\sqrt{2}\bh) - (-1)^{\mu} \sqrt{n_2} \gamma_{-l_2-1,m_2+\mu}^{-\mu} p_{l_2+1,m_2+\mu,n_2-1}(\sqrt{2}\bh) \right] \\
& + (\text{Higher degree polynomials}).
\end{split}
\end{equation}
We refer the readers to \cite[Appendix B]{Cai2015} for the derivation of these equations. In the above two equations, ``higher degree polynomials'' refer to the orthogonal polynomials of $\bg$ and $\bh$ whose degrees are higher than $l+2(n+n')-1$, and such terms will vanish after substituting \eqref{eq:h2_g2} and \eqref{eq:hg} into \eqref{eq:Alnnp1}. The recurrence relation \eqref{eq:A_recur} can be obtained by such substitution and using the properties
\begin{displaymath}
m_1 + m_2 = 0, \qquad \gamma_{l,m}^{\mu} = \gamma_{l,-m}^{-\mu}. \qedhere
\end{displaymath}
\end{proof}

\bibliographystyle{amsplain}
\bibliography{article}
\end{document}